\documentclass[a4paper,10pt]{article}

\usepackage{mathtools}
\mathtoolsset{showonlyrefs=true}

\usepackage[colorlinks=true,bookmarks=true,citecolor=blue,
bookmarksnumbered=true,bookmarkstype=toc,linktocpage=true]{hyperref}

\usepackage[
margin=2.5cm]{geometry}
\usepackage[T1]{fontenc}
\usepackage{amsmath,amssymb,amsfonts,amsthm,mathrsfs,dsfont,verbatim}
\usepackage[english]{babel}

\usepackage{xcolor}

\newcommand {\wt}[1]{ \widetilde{#1}}
\newcommand {\wh}[1]{ \widehat{#1}}

\newcommand{\rone}{\mathbb{R}}
\renewcommand{\Re}{\rone}

\newcommand{\E}{\mathds{E}}
\renewcommand{\P}{\mathds{P}}

\newcommand{\prt}{\partial}
\renewcommand {\epsilon}{\varepsilon}

\newcommand{\RR}{\mathbb{R}}

\newcommand{\Ff}{\mathcal{F}}

\newcommand{\al}{\alpha}
\newcommand{\eps}{\varepsilon}

\newcommand{\ds}{\displaystyle}

 \newtheorem{thm}{Theorem}[section]
 
 \newtheorem{prp}{Proposition}[section]
 \newtheorem{cor}{Corollary}[section]
 
 \newtheorem{lem}{Lemma}[section]
 \newtheorem{asm}{Assumption}[section]
 \newtheorem{rem}{Remark}[section]
 \newtheorem{exmpl}{Example}[section]

\DeclareMathSymbol{\ophi}{\mathalpha}{letters}{"1E}

\newcommand{\g}{\gamma}

\newcommand{\be}{\begin{equation}}
\newcommand{\ee}{\end{equation}}
\newcommand{\ben}{\begin{equation*}}
\newcommand{\een}{\end{equation*}}

\newcommand{\ba}{\begin{aligned}}
\newcommand{\ea}{\end{aligned}}

\DeclareMathOperator{\Var}{Var}

\DeclareMathOperator{\sgn}{sgn}

\newfont{\cyrfnt}{wncyr10}
\def\J3{\cyrfnt{\rm \u{\cyrfnt I}}}
\def\j3{\cyrfnt{\rm \u{\cyrfnt i}}}

\usepackage[]{color}
\definecolor{DarkGreen}{rgb}{0.1,0.7,0.3}   

\definecolor{DarkGreen}{rgb}{0.1,0.7,0.3}   

\numberwithin{equation}{section}

\allowdisplaybreaks

\usepackage{xcolor}
\def\ds#1{\displaystyle{#1}}
\def\nn{\nonumber}

\newcommand{\koko}{\textcolor{magenta}{koko}}

\def\nn{\nonumber}
\def\lp{L\'{e}vy process}
\def\lm{L\'{e}vy measure}

\newcommand{\tes}{\hat{\theta}_{n}}

\newcommand{\aes}{\hat{\alpha}_n}

\newcommand{\tz}{\theta_{0}}

\newcommand{\cip}{\xrightarrow{\P}}

\newcommand{\sumk}{\sum_{k=1}^{n}}

\def\lad{\mathds{L}}
\newcommand{\mbbr}{\mathbb{R}}
\newcommand{\mbbrp}{\mathbb{R}_{+}}

\newcommand{\mcf}{\mathcal{F}}
\newcommand{\wc}{\Rightarrow}

\newcommand{\Del}{\Delta}
\newcommand{\sig}{\sigma}

\begin{document}
\setlength{\baselineskip}{4.5mm}

\title{LAD estimation of locally stable SDE}

\author{%
	Oleksii M. Kulyk%
    \thanks{Wroclaw University of Science and Technology,
Wybrze\'ze Wyspia\'nskiego Str. 27, 50-370 Wroclaw, Poland
    \texttt{kulik.alex.m@gmail.com}}
    \and
    Hiroki Masuda%
    \thanks{Graduate School of Mathematical Sciences, University of Tokyo, 3-8-1 Komaba Meguro-ku Tokyo 153-8914, Japan.
    \texttt{hmasuda@ms.u-tokyo.ac.jp}}
   }

\date{\today}

\maketitle

\begin{abstract}
We prove the asymptotic mixed normality of the least absolute deviation (LAD) estimator for a locally $\al$-stable stochastic differential equation (SDE) observed at high frequency, where $\al\in(0,2)$. We investigate both ergodic and non-ergodic cases, where the terminal sampling time diverges or is fixed, respectively, under different sets of assumptions. The objective function for the LAD estimator is expressed in a fully explicit form without necessitating numerical integration, offering a significant computational advantage over the existing non-Gaussian stable quasi-likelihood approach.
\end{abstract}

\section{Introduction}

The objective of this paper is the drift estimation of the stochastic differential equation (SDE)
\be\nn
dX_t=a(\theta;X_t)\, dt+\sigma(X_{t-})dZ_t
\ee
on the basis of a discrete-time sample $(X_{t_{k,n}})_{k=0}^{n}$, where $t_{k,n}=kh_{n}$ with the sampling step size $h_{n}\to 0$ as $n\to\infty$, that is, we consider high-frequency sampling. We assume that the parametric drift coefficient $a(\theta;x)$ is known up to a finite-dimensional parameter $\theta\in\Theta\subset\RR^{m}$ while the scale coefficient $\sig(x)$ may be unknown, and that the driving process $Z$ is a \emph{pure jump} \emph{locally $\al$-stable} {\lp} for $\al<2$, to be specified in Section \ref{hm:sec_setup}.

To estimate the true value $\tz$ of $\theta$ from $(X_{t_{k,n}})_{k=0}^{n}$, we consider the Least Absolute Deviation (LAD) type estimator defined to be any element $\tes$ minimizing the random function
\begin{equation}\label{lad.cf}
\lad_{n}(\theta):=\sumk V(X_{t_{k-1,n}}) \left|X_{t_{k,n}}-F_{h_{n}}(\theta; X_{t_{k-1,n}} )\right|,
\end{equation}
for suitable regressor function $F_h(\theta;x)$ and non-negative weight function $V(x)$. Apart from the weighting through $V(x)$, the choice of the loss function \eqref{lad.cf} corresponds to adopting the Laplace (double exponential) quasi-likelihood with constant scale.
The goal is to prove that the scaled LAD estimator $\sqrt{n}\,h_n^{1-1/\alpha}(\hat\theta_n-\tz)$ is asymptotically (mixed-)normally distributed; we have $\sqrt{n}\,h_n^{1-1/\alpha} \to \infty$ under the condition \eqref{sampling.design} given below. We will reveal that, despite its pretty simple form, having computational advantages, the LAD estimation strategy shows the following nice features:
\begin{itemize}
\item Unlike the conventional Gaussian quasi-likelihood estimation (see \cite{Mas13as}), and as in the non-Gaussian stable quasi-likelihood estimation (see \cite{Mas19spa}, \cite{CleGlo20}, and the references therein), the LAD estimation allows us to deal with cases of fixed terminal sampling time (i.e. the terminal sampling time $t_{n,n}\equiv T$ is fixed);
\item Further, we can obtain a rate-optimal, namely $\sqrt{n}\,h_n^{1-1/\al}$-consistent estimator without specific information of the index $\al$ (see \cite{BroMas18}, \cite{IvaKulMas15}, \cite{Mas09jjss}, and also the references in \cite{CleGlo20} for the rate-optimality).
\end{itemize}
These features are novel, and neither the Gaussian quasi-likelihood estimator nor the non-Gaussian stable quasi-likelihood estimator in the literature can simultaneously enjoy them.

The rate of convergence $\sqrt{n}\,h_n^{1-1/\al}$ reflects the non-Gaussian nature of the driving noise process through the index $\al\in(0,2)$.
This reveals that the $L^1$-loss case can more appropriately handle the small-time character of the driving noise than the $L^2$-loss (Gaussian-loss) case.

The theory and asymptotics of the LAD-type estimator and related Laplace quasi-likelihood-based inference have a long history: among other, we refer to \cite{BarBouDja17}, \cite{CYZZ08}, \cite{Obe82}, \cite{Pol91}, \cite{Zwa97}, \cite{ZhuLin11}, \cite{ZhuLin15} and the references therein.
However, our results seem to be the first that prove the rigorous asymptotics for the self-weighted LAD-type estimator of a weak solution to the nonlinear locally stable L\'{e}vy-driven SDE observed at high-frequency.
To our knowledge, \cite{Mas10ejs} is the only paper that studied a LAD-type estimation based on high-frequency sampling. It considered a self-weighted LAD estimation of a class of L\'{e}vy-driven ergodic Ornstein-Uhlenbeck processes under the ergodicity (i.e. $a(\theta;x)=\theta_{1}-\theta_{2}x$ with $\theta_{1}\in\mbbr$ and $\theta_{2}>0$, with $\sig(x)$ being a constant) and the large-time asymptotics where $t_{n,n}\to\infty$ as $n\to\infty$. The domain of applicability of the result given in this paper is much broader, making it possible to deal with the process $X$ with the non-linearity of $a(\theta;x)$, the skewness of the noise distribution $\mathcal{L}(Z_1)$, and the non-ergodicity in a unified manner.

\medskip

This paper is organized as follows.
Section \ref{hm:sec_setup} described the underlying model setup and basic assumptions.
Section \ref{sec:main.results} presents the main results, the asymptotic (mixed) normality of the LAD estimator. The proofs are given in Sections \ref{sec:proof.I} and \ref{sec_proof.II}. We construct a consistent estimator of the asymptotic (possibly random) covariance matrix of the LAD estimator in Section \ref{hm:sec_ceAV}. 
The appendix Sections \ref{sA1}, \ref{sB}, and \ref{hm:sec_moments} present some technical materials used in the proofs of the main results.

\medskip

\paragraph{Conventions.} 
We denote by $C, c$ generic positive constants whose values can vary at each appearance. When these constants depend on additional parameters (e.g. the truncation level $R$), we may write $c_R, C_R$.
For any sequences $(\xi_{n})$ and $(\zeta_{n})$ of nonnegative random variables, we write $\xi_{n}\lesssim \zeta_{n}$ if $\xi_{n}/\zeta_{n}\le C$ a.s. for every $n$ large enough.
We denote by $\frac{\partial}{\partial x}$ the partial derivative operator with respect to $x$, and by $\nabla_\theta$ the gradient operator with respect to $\theta$. For a matrix $A$, we write $A^{\otimes 2}=AA^\top$ with $A^\top$ denoting the transpose of $A$.

\section{Setup and assumptions}
\label{hm:sec_setup}

Throughout the paper we are given a filtered probability space $(\Omega,\mathcal{F},(\mathcal{F}_{t})_{t\in\mbbrp},\P)$ with a  solution $X$ to the partly parametrized family of SDE given by
\be\label{sde}
dX_t=a(\theta;X_t)\, dt+\sigma(X_{t-})dZ_t,
\ee
where the driving {\lp} $Z$ is $(\mathcal{F}_{t})$-adapted and independent of the initial variable $X_0$.  We assume that:
\begin{itemize}
\item The parametrized drift coefficient $a(\theta;x)$ is known up to a finite-dimensional parameter $\theta\in\Theta$, where $\Theta$ is a bounded 
domain in $\RR^{m}$;
\item The scale coefficient $\sig(x)$ may be unknown;
\item The driving process $Z$ is assumed to be a \emph{pure jump locally $\al$-stable} {\lp} with $\al\in(0,2)$, which we will specify later.
\end{itemize}
Our objective is to estimate the true value $\tz\in\Theta$ based on a high-frequency sample $(X_{t_{k,n}})_{k=0}^{n}$ with $t_{k,n}=kh_{n}$, where the positive sequence $h_{n}\to 0$ ($n\to\infty$) denotes the sampling-step size and satisfies that for $T_n:=t_{n,n}=nh_n$,
\begin{equation}\label{sampling.design}
\liminf_{n\to\infty}T_{n} >0.
\end{equation}
The terminal sampling time $T_n$ may or may not diverge.
Although we treat here only a single value $\theta_{0}\in\Theta$ of $\theta$ for brevity, with a trivial modification of the regularity conditions, many of our forthcoming results hold uniformly in $\theta\in\Theta$.
We will sometimes denote by $\P^{\tz}$ the true distribution of the process $X$.

Recall that, by the L\'evy-Khinchin representation, the characteristic function of a L\'evy process without a diffusion component has the form
$$
\E[e^{i\xi Z_t}]=e^{-t\psi(\xi)},
$$
where the \emph{L\'evy exponent} $\psi(\xi)$ has representation
\be\label{psi}
\psi(\xi)=-ib\xi+\int_{\Re}(1-e^{i\xi u}+i\xi u1_{|u|\le                          1})\mu(du);
\ee
here and below we denote by $\mu(du)$ the L\'evy measure of the process. In this paper, we call the L\'evy process \emph{pure jump} if
\be\label{psi_pj}
\psi(\xi)=\mathrm{P.V.}\int_{\Re}(1-e^{i\xi u})\mu(du)=\lim_{\eps\to 0+}\int_{|u|>\eps}(1-e^{i\xi u})\mu(du),
\ee
that is, the `external drift' part $ -ib\xi$ in the representation \eqref{psi} cancels, in the Principal Value sense,  the `internal drift' which comes from the compensator part in the integral. If \eqref{psi_pj} holds true, the L\'evy process $Z$ can be obtained as a weak limit for $\varepsilon\to 0+$ of the compound Poisson processes with the L\'evy exponents
$$
\psi_{(\eps)}(\xi)=\int_{|u|>\eps}(1-e^{i\xi u})\mu(du),
$$
and thus can be intuitively understood to be free both from the drift and compensator terms; see also Remark \ref{rem1} below. It is substantial for our estimation procedure that the noise is `free from a drift' in the above sense because we need to avoid the (unknown) stochastic term $\sigma(X_{t-})dZ_t$ in \eqref{sde} to interfere with the parameterized drift coefficient $a(\theta;X_t)$ which will be used in the construction of the estimator.
A sufficient but not necessary condition for this property to hold is that the {\lp} $Z$ is symmetric.

Furthermore, we assume the driving process to be \emph{locally $\alpha$-stable} for $\al\in(0,2)$, meaning that its L\'evy measure has the form
\be\label{LM}
\mu(du)=\mu_\alpha(du)+\nu(du),
\ee
where the `principal' part 
$$
\mu_\al(du)=\frac{c_\al}{|u|^{\alpha+1}}\, du, \quad c_\al=\left\{
  \begin{array}{ll}\al\left(2\Gamma(1-\alpha)\cos\frac{\pi\alpha}{2}\right)^{-1}, &\alpha\not=1,\\[2mm]
\frac{1}{\pi}, &\alpha=1
\end{array}\right.
$$
is the L\'evy measure of the standard symmetric $\al$-stable process with the L\'evy exponent
$$
\psi_\al(\xi)=|\xi|^\alpha.
$$
The `nuisance' part $\nu(du)$ is allowed to be a signed measure and is assumed to have a small-jump activity strictly lower than the `principal' one. The latter means that its total variation should have the \emph{Blumenthal-Getoor index} $\beta<\alpha$:
\be\label{BG}
|\nu|(\{u:|u|\geq \eps\})\leq C\eps^{-\beta}, \quad \eps\in (0,1].
\ee

The principal assumptions on the model \eqref{sde} are given below.

\begin{asm}[Properties of coefficients and noise]\label{Ass1}\indent
\begin{enumerate}
  \item Drift coefficient $a(\theta;x)$ satisfies the finite range H\"older condition in $x$ with exponent $\eta\in(0,1]$:
  \be\label{ass1}
  |a(\theta; x)-a(\theta; y)|\leq C|x-y|^\eta, \quad |x-y|\leq 1, \quad \theta\in \Theta.
  \ee
  Moreover, $a(\theta;x)$ is differentiable in $\theta$ for each $x$, and the function $x \mapsto a(\tz;x)$ is locally bounded.
  \item Jump coefficient $\sigma(x)$ is bounded, separated from $0$, and satisfies the finite range H\"older condition in $x$ with exponent $\zeta\in(0,1]$:
  \be\label{ass2}
  |\sigma(x)-\sigma(y)|\leq C|x-y|^\zeta, \quad |x-y|\leq 1.
  \ee
  \item The L\'evy exponent of the L\'evy process $Z$ is given by  \eqref{psi_pj}, where  the L\'evy measure $\mu(du)$ satisfies \eqref{LM} and \eqref{BG}.
   \item The \emph{balance condition} holds:
   \be\label{balance}
   \alpha+\eta>1.
   \ee
\end{enumerate}
\end{asm}

In particular, Assumption \ref{Ass1} ensures the existence of a solution to \eqref{sde}. See Section \ref{sA1} for related details.

\begin{rem}\label{rem1} It will follow from the  (statistic stability) Assumption \ref{AssStab} below that $\beta<\alpha/2<1$. This yields that  the assumption \eqref{psi_pj} for the L\'evy process to be `pure jump' is equivalent to the following: in \eqref{psi}, we have
\be\label{pur_jump}
b=\int_{|u|\le 1}u\,\nu(du).
\ee
\end{rem}

Next, let us proceed with the assumptions on the objects involved in the LAD type estimator $\hat\theta_n$; recall \eqref{lad.cf}. Consider the Cauchy problem
\be\label{ODE}
df_t=a(\theta;f_t)dt, \quad f_0=x.
\ee
Assumption \ref{Ass1} yields that this problem has a solution ${f}_t(\theta; x)$. This solution may fail to be unique; however, it is easy to show that any two solutions 
$f^{1}_t(\theta; x)$ and $f^{2}_t(\theta; x)$ satisfy
\be\label{ODE_sol}
|f^{1}_t(\theta; x)-f^{2}_t(\theta; x)|\leq C t^{\frac{1}{1-\eta}}, \quad t\in [0,1].
\ee
We call any locally bounded function $W:\Re\to [0, \infty)$ a \emph{weight function}; typical example here is 
$$
W(x)=C(1+|x|^p), \quad p\geq0,
$$
for some $C>0$ whose specific value does not matter.

\begin{asm}[Properties of the regressor]\label{Ass2}
The following conditions hold for a certain weight function $W(x)$ and a constant $\gamma>0$.
\begin{enumerate}
\item Regressor function $F_t(\theta;x)$ satisfies
\be\label{F3}
|F_t(\theta;x)-x-t a(\theta;x)|\leq t^{1+\gamma} W(x).
\ee

\item
There exists a constant $\delta_{regr}>0$ such that 
for some solution ${f}_t(\theta; x)$ to \eqref{ODE},
\be\label{F0}
|F_t(\theta; x)-f_t(\theta; x)|\leq t^{1/\alpha+\delta_{regr}}W(x).
\ee

\item Regressor function $F_t(\theta;x)$ is differentiable in $\theta$ and the following bounds hold:  \be\label{F1}
|\nabla_{\theta} F_t(\theta;x)|\leq t W(x),
\ee
\be\label{F1a}
|\nabla_{\theta} F_t(\theta;x)-t\nabla_\theta a(\theta;x)|\leq t^{1+\gamma} W(x),
\ee
\be\label{F5}\ba
|F_t(\theta;x)&-F_t(\theta_0;x)-t\nabla_\theta a(\theta_0;x)\cdot(\theta-\theta_0)|
\leq t|\theta-\theta_0|\Big(|\theta-\theta_0|^\gamma+ t^\gamma\Big) W(x).
\ea
\ee

\end{enumerate}
\end{asm}

\begin{rem}\label{remAss2}
    Without loss of generality, we can and will assume that $\delta_{regr}\leq \delta_{drift}$, see Assumption \ref{AssStab} below for the definition of $\delta_{drift}$. Then, by \eqref{ODE_sol} the assumption \eqref{F0} holds for \textit{any} solution to \eqref{ODE}. Moreover, it still holds with a solution $f_t(\theta;x)$ replaced by the approximate solution  $\mathfrak{f}_t(\theta;x)$ involved in the semi-explicit representation of the transition density $p_t(\theta;x,y)$ of the process; see Appendix \ref{sA1} and the discussions therein. 
\end{rem}

We illustrate the above assumption with several natural examples of regressors. 

\begin{exmpl}[Euler scheme]
\label{exF1}
One natural choice of the regressor is the one used in the classical Euler scheme:
$$
F_t(\theta;x)=x+ta(\theta;x).
$$
For this choice, the conditions \eqref{F3}, \eqref{F1a} hold true trivially and the conditions \eqref{F1}, \eqref{F5} hold true whenever
\be\label{ass_Eu}
|\nabla_\theta a(\theta,x)|\leq W(x), \quad |\nabla_\theta a(\theta,x)-\nabla_\theta a(\theta_0,x)|\leq |\theta-\theta_0|^\gamma W(x).
\ee
To verify \eqref{F0}, we assume that Assumption \ref{Ass1} holds true. By the first condition in this assumption, $a(\theta;x)$ has at most linear growth uniformly in $x$ and thus 
$$
|f_t(\theta;x)-x|\leq \int_0^t|a(\theta;f_s(\theta;x))|\,ds\leq Ct(1+|x|), 
$$
$$
|F_t(\theta;x)-f_t(\theta;x)|\leq \int_0^t|a(\theta;x)-a(\theta;f_s(\theta;x))|\,ds\leq Ct^{1+\eta}(1+|x|).
$$
Therefore \eqref{F0} holds true with 
$$
\delta_{regr}=1+\eta-\frac{1}{\alpha}
$$
and $W(x)=C(1+|x|^p)$ whenever $p\geq2$ is such that \eqref{ass_Eu} holds true.
\end{exmpl}

\begin{rem} We need $\delta_{regr}>0$ and moreover, by Assumption \ref{AssStab} and Remark \ref{r23} below,
$$
\delta_{regr}>\frac{1}{2}.
$$
This gives the following lower bound for $\alpha$ for the Euler scheme-based regressor to be applicable in our setting:
\be\label{cond1}
\alpha>\frac{2}{1+2\eta},
\ee
which equals $\al>2/3$ in the regular case where $\eta=1$.
\end{rem}

\begin{exmpl}[Improved Euler schemes]
\label{exF2}
Continued from Example \ref{exF1}, we can show that additional regularity of $a(\theta;x)$ allows one to weaken the bound \eqref{cond1} for $\alpha$ by using improved Euler schemes based on the Taylor expansion. For instance, if $\frac{\partial}{\partial x}a(\theta;\cdot)$ is 
bounded and H\"older continuous with the index $\eta$ uniformly in $\theta$, then we can put 
$$
F_t(\theta;x)=x+ta(\theta;x)+\frac{1}{2}t^2 a(\theta;x)\frac{\partial}{\partial x}a(\theta;x).
$$
Indeed, then direct computation gives (we may set $t\le 1$)
$$
|F_t(\theta;x)-f_t(\theta;x)|\leq C (1+|x|^{1+\eta})t^{2+\eta},
$$
and therefore \eqref{F0} holds true with 
$$
\delta_{regr}=2+\eta-\frac{1}{\alpha}
$$
and the same $W(x)$ as in Example \ref{exF1}.
This gives the following lower bound for $\alpha$:
\be\label{cond2}
\alpha>\frac{2}{3+2\eta}.
\ee
In addition, \eqref{F3} holds true trivially and conditions \eqref{F1}, \eqref{F1a}, \eqref{F5} hold true under \eqref{ass_Eu} combined with 
\be\label{ass_Eu_2}
\left|\nabla_\theta \frac{\partial}{\partial x}a(\theta,x)\right|\leq W(x).
\ee
The condition \eqref{cond2} can be further relaxed by using higher-order improved Euler schemes.
\end{exmpl}

In some particular important cases, the solution to \eqref{ODE} can be given explicitly and used as the regressor. 

\begin{exmpl}[Linear ODE] Let $a(\theta;x)=\theta_1+\theta_2x$ for $\theta=(\theta_1, \theta_2)$. Then, 
$$
f_t(\theta;x)=e^{\theta_2 t}x+\theta_1 t\psi(\theta_2 t)
$$
for the smooth function $\psi(x)=(e^x-1)/x$. 
Taking $F_t(\theta;x)=f_t(\theta;x)$, we get \eqref{F0} with arbitrarily large $\delta_{regr}$. It is also easy to show that conditions \eqref{F3}, \eqref{F1}, \eqref{F1a}, \eqref{F5} hold true with $W(x)=C(1+|x|)$.
\end{exmpl}

\begin{exmpl}[Bernoulli's ODE] Let 
$a(\theta;x)=\theta_1 x^{\langle\kappa\rangle}+\theta_2 x$ for $\theta=(\theta_1, \theta_2)$, where $x^{\langle\kappa\rangle}:=|x|^\kappa\sgn(x)$. Then the function   
$$
f_t(\theta;x)=\left(e^{(1-\kappa)\theta_2 t}|x|^{1-\kappa}+(1-\kappa)\theta_1 t\,\psi((1-\kappa)\theta_2 \,t)\right)_+^{\frac{1}{1-\kappa}}\sgn(x), 
$$
solves \eqref{ODE}, note that for $\kappa<1$ and $\theta_1>0$ this solution is not unique. 
Taking $F_t(\theta;x)=f_t(\theta;x)$, we get \eqref{F0} with arbitrarily large $\delta_{regr}$. Assuming $\kappa<1$, which is required for Assumption \ref{Ass1} to be satisfied, it is easy to show that conditions \eqref{F3}, \eqref{F1}, \eqref{F1a}, \eqref{F5} hold with $W(x)=C(1+|x|)$.  
\end{exmpl}

Next, we impose the following assumption on the discretization step $h_n.$ 

\begin{asm}[Statistic stability assumption]\label{AssStab}\indent
There exists positive constant $\delta$ such that
$$
\delta<\delta_{drift}:=\frac{\alpha+\eta-1}{\alpha}, \qquad \delta<\delta_\sigma:=\frac{\zeta}{\alpha}, \qquad  \delta<\delta_\nu:=\frac{\alpha-\beta}{\alpha}, \qquad \delta<\delta_{regr}
$$
and that
\be\label{h_step}
nh_n^{2\delta}\to 0.
\ee
\end{asm}

\begin{rem}\label{r23}
\label{hm:rem_A2.3}
Under \eqref{sampling.design}, the condition \eqref{h_step} yields that
$$
\delta>\frac12.
$$
This actually puts stronger assumptions on $\eta, \zeta$ and $\beta$ than those imposed primarily:
$$
 \frac{\alpha}2+\eta>1, \quad \zeta>\frac{\alpha}2, \quad \beta<\frac{\alpha}2.
$$
In the regular case where $\eta=\zeta=1$, these conditions reduce to only 
\begin{equation}
    \beta<\frac{\al}{2},
\end{equation}
which in particular requires that the nuisance part $|\nu|(du)$ (recall \eqref{BG}) is finite-variation.
\end{rem}

Finally, we will require a certain balance condition between the regularity of the `nuisance' part $\nu(du)$ of the L\'evy measure and the stability index $\alpha$. Denote $B_r(x)=\{y:|y-x|\leq r\}$, $x\in \mathbb{R}$, $r\geq 0$.
We will use Assumption \ref{AssReg} below to prove the crucial estimate about the residual term in the decomposition of the transition density; see Theorem \ref{tA2}.

\begin{asm}\label{AssReg}\indent
There exist constants $\kappa\geq 0, \beta'>0, C>0$ such that
$$
1-\kappa<\beta'<\alpha
$$
and
\be\label{dimension}
|\nu|(B_r(z))\leq Cr^{\kappa}|z|^{-\beta'-\kappa}, \quad r\leq \frac12|z|, \quad z\in \mathbb{R}.
\ee
\end{asm}
\begin{rem} Condition \eqref{dimension} has the same spirit as the notion of a `$\kappa$-measure', which requires a measure of a ball of a radius $r$ to be bounded by $Cr^\kappa$, e.g. \cite{KalSzt15}. The additional factor $|z|^{-\beta'-\kappa}$ appears here because we extend this notion to L\'evy measures, which may `explode' near the origin. It is easy to verify that, for a measure satisfying  \eqref{dimension},  
the index condition \eqref{BG} holds with $\beta=\beta'$. Therefore, such an extension is well-adjusted with the basic setup.
\end{rem}

To illustrate Assumption \ref{AssReg}, let us consider two natural examples.

\begin{exmpl} 
\label{ex_A2.4}
Let $\kappa=0$, which means that there is no actual regularity limitation on the measure $\nu(du)$. The bound \eqref{dimension} for $\kappa=0$ is equivalent to the following: \eqref{BG}  holds for $\beta=\beta'$ and \emph{for all} $\eps>0$. With a minor re-arrangement, we can formulate the following  equivalent condition for Assumption \ref{AssReg} to hold with $\kappa=0$: $\alpha>1,$ the ``small-jump intensity'' condition \eqref{BG} holds with some $\beta<\alpha$, and the following ``tail condition'' holds with some $\beta''>1$:
\be\label{tail}
|\nu|(\{u:|u|\geq r\})\leq Cr^{-\beta''}, \quad r\geq 1.
\ee
Indeed, we can assume without loss of generality that $\beta''<\alpha$, and then combining \eqref{BG} and \eqref{tail} we see that Assumption \ref{AssReg} holds true for $\kappa=0$ with $ \beta'=\max\{\beta, \beta''\}$. Note that  $\beta'$, unlike $\beta$, is not involved in any other assumption like (the statistic stability) Assumption \ref{AssStab}, hence we are free to choose it close to $\alpha$.
\end{exmpl}

\begin{exmpl} 
\label{ex_A2.4-2}
Let $\kappa=1$ so that there is no limitation on $\alpha>0$. Considerations similar to those in Example \ref{ex_A2.4} lead to  the following  equivalent condition for Assumption \ref{AssReg} to hold true with $\kappa=1$: $\alpha\in (0,2)$ is arbitrary and $\nu(du)$ admits a density $\mathfrak{n}(u)$ with respect to the Lebesgue measure such that
\begin{equation}\label{ex_A2.4-2-eq1}
|\mathfrak{n}(u)|\leq C|u|^{-\beta-1}1_{|u|\leq 1}+C|u|^{-\beta''-1}1_{|u|>1}, \quad \beta<\alpha, \quad \beta''>0.
\end{equation}
This is a very mild and easy-to-verify condition when the {\lm} $\mu(du)$ admits an explicit density; recall also the condition $\beta<\al/2$ mentioned in Remarks 
\ref{hm:rem_A2.3}.
See also Example \ref{hm:rate-example}.
\end{exmpl}

\section{Main results} 
\label{sec:main.results}

We study the asymptotic distribution of the LAD estimator $\tes$, defined by any minimizer of the random function \eqref{lad.cf} for a weight function $V(x)$:
\begin{equation}\nn
\lad_{n}(\theta):=\sumk V(X_{t_{k-1,n}})
\left|X_{t_{k,n}}-F_{h_{n}}(\theta; X_{t_{k-1,n}} )\right|.
\end{equation}
We will formulate our main results, separating two principal cases of finite observation horizon $T_n=nh_n\to T\in (0, \infty)$ and infinite observation horizon $T_n\to \infty$.

\subsection{Finite observation horizon}

In this case, the observed trajectory of $X$ is a.s. bounded, namely $\sup_{t\le T}|X_t(\omega)|\le  C(\omega)$ for an a.s. finite random variable $C(\omega)$,
hence the weight $V(x)$ is not important for the entire consideration. Thus, for simplicity, we take  $V(x)\equiv 1$ in this case. Denote
$$
\Gamma_0=\Gamma_0(\theta_0)=
\frac1T \int_0^T\Big(\nabla_\theta a(\theta_0; X_{t})\Big)^{\otimes 2}\, dt,
$$
$$
\Sigma_0=\Sigma_0(\theta_0)=
\frac1T \int_0^T\frac{1}{\sigma(X_{t})}\Big(\nabla_\theta a(\theta_0; X_{t})\Big)^{\otimes 2}\, dt.
$$
For an $\Ff$-measurable non-negative definite $m\times m$-random matrix $A$, we will use the symbol $MN(0,A(\omega))$ to denote the distribution of a random vector of the form $A^{1/2}\xi$ with a standard $m$-dimensional Gaussian random vector $\xi$ defined on an extended probability space and independent of $\Ff$. Further, we denote by
\begin{equation}\label{hm:def_phi.al}
\phi_{\al}(x)=\frac1{2\pi}\int_{-\infty}^{\infty}e^{-ix\xi-|\xi|^\al}\, d\xi    
\end{equation}
the $\al$-stable density with the characteristic function $\xi\mapsto e^{-|\xi|^\al}$, and by $\wc$ the weak convergence.

\begin{thm}\label{t1} Let Assumptions \ref{Ass1} to \ref{AssStab} hold. Assume also the following identifiability conditions:
\begin{itemize}
  \item (Global identifiability): for any $\theta\not=\theta_0$,
    $$
\Lambda_0(\theta)=\int_0^T\Big(a(\theta; X_{t})-a(\theta_0; X_{t})\Big)^2\, dt>0\quad \hbox{a.s.}
$$
  \item (Local identifiability):
  $$
  \P(\Gamma_0\hbox{ is positive definite})=1.
  $$
  \end{itemize}
  Then, the LAD estimator satisfies
  \begin{equation}
\sqrt{n} \, h_n^{1-1/\alpha}(\tes-\tz) \wc MN\left(0,\,\left(2\phi_{\al}(0)\right)^{-2}\Sigma_{0}^{-1}\Gamma_{0}\Sigma_{0}^{-1}\right).
\nonumber
\end{equation}
\end{thm}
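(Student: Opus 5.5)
Write $r_n=\sqrt{n}\,h_n^{1-1/\alpha}$, $X_k=X_{t_{k,n}}$, and $\mathcal{F}_k=\mathcal{F}_{t_{k,n}}$. Localized residuals are
$$
\epsilon_k=\frac{X_k-F_{h_n}(\theta_0;X_{k-1})}{h_n^{1/\alpha}\sigma(X_{k-1})}.
$$
The plan is the standard convexity-free route for $M$-estimators with non-smooth loss. It has two steps: first consistency via a uniform law of large numbers for the rescaled contrast, then asymptotic mixed normality via a quadratic approximation of the localized contrast. For the second step we use the (Pollard/Knight-type) identity
$$
|x-y|-|x|=-y\,\sgn(x)+2\int_0^y\big(1_{x\le s}-1_{x\le 0}\big)\,ds .
$$
Since the observed path is a.s. bounded on $[0,T]$, all weights $W(X_{t})$ may be localized, that is, replaced by constants on events $\{\sup_{t\le T}|X_t|\le R\}$ whose probability tends to $1$ as $R\to\infty$. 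So we work with bounded coefficients throughout.

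\textbf{Consistency.} Consider $\mathbb{U}_n(\theta)=h_n^{-1/\alpha}n^{-1}\big(\lad_n(\theta)-\lad_n(\theta_0)\big)$. By (F5) and (F3),
$$
F_{h_n}(\theta;x)-F_{h_n}(\theta_0;x)=h_n\big(a(\theta;x)-a(\theta_0;x)\big)+O(h_n^{1+\gamma}),
$$
so the shift in units of $h_n^{1/\alpha}$ is $h_n^{1-1/\alpha}\Delta_k(\theta)$, which is small when $\alpha>1$ and large otherwise. Hence the proper normalization of $\lad_n(\theta)-\lad_n(\theta_0)$ is by $n h_n^{1/\alpha}\big(h_n^{1-1/\alpha}\big)^2$ when the shift is small. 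Rather than treating the two regimes separately, I would prove consistency through the convexity argument of Hjort--Pollard. The map $\theta\mapsto\lad_n(\theta)$ is not convex in general, but it is convex in the local linearization $F_{h_n}(\theta_0;x)+h_n\nabla_\theta a\cdot u$. A global step is still needed, and for it I would show that
$$
\frac{1}{n h_n}\big(\lad_n(\theta)-\lad_n(\theta_0)\big)\ \text{is bounded below by}\ c\,\frac1T\int_0^T\min\{|a(\theta;X_t)-a(\theta_0;X_t)|,\,\cdot\,\}\,dt
$$
uniformly on $\{|\theta-\theta_0|\ge\epsilon\}$. This lower bound follows from the conditional expectation
$$
\E\big[|\epsilon_k-y|-|\epsilon_k|\,\big|\,\mathcal{F}_{k-1}\big]\approx\int_0^y\big(2\Phi_\alpha(s)-1\big)\,ds,
$$
which is strictly positive and increasing in $|y|$, together with a martingale LLN and the global identifiability condition $\Lambda_0(\theta)>0$.

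\textbf{Local expansion.} With $\theta=\theta_0+u/r_n$ and $y_k(u)=\big(F_{h_n}(\theta;X_{k-1})-F_{h_n}(\theta_0;X_{k-1})\big)/(h_n^{1/\alpha}\sigma(X_{k-1}))$, conditions (F5) and (F1a) give
$$
y_k(u)=n^{-1/2}\,\frac{\nabla_\theta a(\theta_0;X_{k-1})\cdot u}{\sigma(X_{k-1})}+\text{negligible}.
$$
The identity then yields
$$
\frac{\lad_n(\theta)-\lad_n(\theta_0)}{h_n^{1/\alpha}}=-u\cdot\Delta_n+\tfrac12 u^\top Q_n u+o_p(1),
$$
where
$$
\Delta_n=n^{-1/2}\sum_{k=1}^n \nabla_\theta a(\theta_0;X_{k-1})\,\sgn(\epsilon_k).
$$
Here the factor $\sigma(X_{k-1})$ cancels against $|\cdot|$, since $\sigma>0$. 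The quadratic term is
$$
Q_n=2\cdot 2\phi_\alpha(0)\cdot\frac1n\sum_{k=1}^n\frac{(\nabla_\theta a(\theta_0;X_{k-1}))^{\otimes2}}{\sigma(X_{k-1})}\cdot\frac12\to 2\phi_\alpha(0)\,\Sigma_0 .
$$
It comes from the compensator of the integral term, using that the conditional law of $\epsilon_k$ has density close to $\phi_\alpha$ near $0$, plus a martingale remainder with vanishing variance. The key input is the transition density decomposition $p_t(\theta;x,y)$ from Appendix \ref{sA1} (Theorem \ref{tA2}). It gives, uniformly on compacts in $x$,
$$
\P(\epsilon_k\le 0\mid\mathcal{F}_{k-1})=\tfrac12+O(h_n^{\delta}),\qquad \text{and the density of }\epsilon_k\ \text{near }0\ \text{is}\ \phi_\alpha(0)+o(1).
$$
This is where $\delta_{regr},\delta_{drift},\delta_\sigma,\delta_\nu$ enter, because $F_{h_n}$ must approximate the median of $X_k$ given $X_{k-1}$ to precision $o(h_n^{1/\alpha})$. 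The bias in the score is then $\sqrt n\,O(h_n^{\delta})$, which is $o(1)$ by \eqref{h_step}.

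\textbf{Limit and conclusion.} $\Delta_n$ is, up to this negligible bias, a martingale with bounded increments and conditional variance
$$
\frac1n\sum_{k=1}^n(\nabla_\theta a)^{\otimes2}\to\Gamma_0 .
$$
A stable martingale CLT then gives $\Delta_n\to MN(0,\Gamma_0)$ $\mathcal{F}$-stably. This needs an asymptotic orthogonality condition with the driving L\'evy process, which holds because $\sgn(\epsilon_k)$ is asymptotically independent of the rescaled increment in the conditional symmetric sense. Riemann-sum convergence gives $\Gamma_0$ and $\Sigma_0$. Stability allows random limits in the convexity lemma, which yields
$$
r_n(\tes-\tz)=(2\phi_\alpha(0)\Sigma_0)^{-1}\Delta_n+o_p(1),
$$
and hence the stated covariance.

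The main obstacle is the uniform conditional-median/density estimate: showing
$$
\sqrt n\,\sup_k\big|\P(\epsilon_k\le 0\mid\mathcal{F}_{k-1})-\tfrac12\big|\to0 .
$$
I would attempt this directly from Theorem \ref{tA2}, bounding the residual kernel in $L^1$ by $h_n^{\delta}$ and using the symmetry of $\phi_\alpha$ about $0$.
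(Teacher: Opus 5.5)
\textbf{Main gap: from consistency to the rate $r_n$.} Your expansion $G_n(u)=-u\cdot\Delta_n+\frac12u^\top Q_nu+o_P(1)$ of $G_n(u)=h_n^{-1/\alpha}\big(\lad_n(\theta_0+r_n^{-1}u)-\lad_n(\theta_0)\big)$ holds only pointwise in $u$. Since $G_n$ is not convex, the Hjort--Pollard convexity lemma does not apply to it. It applies only to the linearized contrast $\bar G_n$, in which $F_{h_n}(\theta;x)$ is replaced by $F_{h_n}(\theta_0;x)+h_n\nabla_\theta a(\theta_0;x)\cdot(\theta-\theta_0)$. To transfer the conclusion to $\hat\theta_n$, you must know beforehand that $\hat u_n=r_n(\hat\theta_n-\theta_0)$ lies, with probability tending to one, in a region where $\sup|G_n-\bar G_n|=o_P(1)$. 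Consistency gives only $|\hat u_n|=o_P(r_n)$, and on $\{|u|\le\epsilon r_n\}$ the linearization error is not $o_P(1)$. The available bound for its compensator part is $|u|^2(|u|/r_n)^{\gamma}$, which is large there, and its martingale part needs a bound uniform over a growing set. Nothing in your proposal produces $\hat u_n=O_P(1)$, and this is where most of the paper's work lies.

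Section~\ref{sec:proof.I} proves preliminary rates $r_n^{-\varsigma}$ for every $\varsigma<1$. It combines a quadratic lower bound for the compensator $Y_n^P(\theta)$, valid only where the normalized shifts $\kappa_{k,n}(\theta)$ stay bounded, with Kolmogorov--Chentsov bounds for the martingale parts, which lose a power $\upsilon<1$ (Lemma~\ref{l42}). Proposition~\ref{p51} then shows $\sup_{|u|\le r_n^{1-\varsigma}}|G_n-\bar G_n|=o_P(1)$ for $\varsigma$ close to $1$. Only after that is the convexity of $\bar G_n$ used to get tightness and apply the argmin theorem.

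The case $\alpha<1$, which you set aside, is exactly where your shortcut breaks. There $h_n^{1-1/\alpha}\to\infty$, so on every fixed neighbourhood of $\theta_0$ the shifts are large and the contrast is in its linear rather than quadratic regime. The intermediate rate $o_P(h_n^{1/\alpha-1})$ of Proposition~\ref{p42} is needed before any quadratic lower bound is available. A peeling argument could replace this, using a modulus of continuity of exact order $|\theta-\theta_0|$ for the martingale parts (e.g.\ chaining with sub-Gaussian martingale increments after localization), but some such argument must be supplied.

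Your consistency step has the same uniformity problem. A pointwise martingale LLN does not control $\inf_{|\theta-\theta_0|\ge\epsilon}$. Moreover, with your normalization $(nh_n)^{-1}$ the lower bound degenerates like $h_n^{1-1/\alpha}$ when $\alpha>1$, so the fluctuations must be shown to be uniformly smaller than that.

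\textbf{Second gap: stable convergence.} The mixed normal limit needs $\Delta_n$ to converge jointly with the random $\Gamma_n,\Sigma_n$, hence stably. In Jacod's theorem (Appendix~\ref{sB}), the orthogonality condition \eqref{item3} must hold for every bounded martingale $N$ of the filtration, or at least a family with total terminal values, not only for the driving L\'evy process. In a filtration carrying a Poisson random measure these martingales are not generated by $Z$; compensated counts of jumps in a fixed set bounded away from $0$ are an example.

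Your stated mechanism is also wrong. $\sgn(\epsilon_k)$ is a function of the rescaled increment and is not asymptotically independent of it. The paper obtains \eqref{item3} from the Jacod--Yor theorem: well-posedness of the martingale problem makes compensated Poisson integrals dense in $\mathcal{H}^1$. It then approximates a bounded $N$ in $\mathcal{H}^2$ by such integrals with integrands vanishing near $u=0$. For these, $|\E[(J_{t_{k,n}}-J_{t_{k-1,n}})\chi^n_k\mid\Ff_{k-1,n}]|\le Ch_nn^{-1/2}$, because a jump of size bounded away from $0$ occurs with probability $O(h_n)$ per interval and $|\chi^n_k|\le Cn^{-1/2}$. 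Symmetry plays no role.

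By contrast, the step you single out as the main obstacle is routine. The bound $\sqrt n\sup_k|\P(\epsilon_k\le0\mid\Ff_{k-1})-\frac12|\to0$ follows exactly as you suggest from \eqref{Residue_1}, the symmetry of $\phi_\alpha$, and \eqref{F0} with $\mathfrak f$ in place of $f$. This is the argument of Lemma~\ref{l41}.
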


\subsection{Infinite observation horizon}

When $T_n\to \infty$, we further assume the following conditions.

\begin{asm}[Drift dissipation and tail moments]\label{A_diss}\ There exists a constant $\kappa>-1$ such that
$$
\limsup_{|x|\to \infty} \frac{a(\theta_0,x)\sgn x}{|x|^\kappa}<0.
$$
In addition,
$$
\int_{|u|>1}|u|^{q}\mu(dz)<\infty
$$
for some $q>0$ with
$$
\kappa+q>1.
$$
\end{asm}

This assumption ensures that the process $X$ is ergodic; 
see Section \ref{sB2}. Denote by $\pi(\theta_0, dx)$ the corresponding unique invariant probability measure, and put
$$
\Gamma_0=\Gamma_0(\theta_0)=\int_{\mathbb{R}}V(x)^2\Big(\nabla_\theta a(\theta_0; x)\Big)^{\otimes 2}\, \pi(\theta_0, dx),
$$
$$
\Sigma_0=\Sigma_0(\theta_0)=\int_{\mathbb{R}}\frac{V(x)}{\sigma(x)}\Big(\nabla_\theta a(\theta_0; x)\Big)^{\otimes 2}\, \pi(\theta_0, dx).
$$
Unlike the case of a finite observation horizon, we must now address the integrability.

\begin{asm}[Integrability condition for weights]\label{A_moment} There exists $p>1$ such that
$$
\sup_{t\geq 0}\E\left[ V(X_t)^p W(X_t)^{2p} \right] <\infty,
$$
where $W(x)$ is the weight function given in Assumption \ref{Ass2}.
\end{asm}

\begin{rem}[On the choice of $V(x)$]\label{rem31} Assumption \ref{A_moment} exhibits the role of the function $V$: while the weight $W$ from the Assumption \ref{Ass2} on the drift coefficient and regressor may be growing, the function $V$ should be decaying at $\infty$ sufficiently fast to ensure that $V(X_t)W(X_t)^{2}$ belong to $L_{1+\eps}$ uniformly in $t>0$. Using Assumption \ref{A_diss}, one can explicitly choose $V$ in the natural case $W(x)=(1+|x|)^{p_W}$, $p_{W}\geq 0$. Indeed, by \cite{KulPav21}, Assumption \ref{A_diss} provides that, for any $p_X<q+\kappa-1$,
$$
\sup_{t\geq 0}\E[|X_t|^{p_X}] \leq C+|X_0|^{p_X}.
$$
Thus $V(x)=(1+|x|)^{-p_V}$ satisfies Assumption \ref{A_moment} whenever 
$$
2p_W-p_V<q+\kappa-1 \ \Longleftrightarrow \ p_V>2p_W+1-q-\kappa.
$$
Since $V$ is meant to dump the growth of the weight function $W$, we will assume furthermore that $V$ is bounded.       
We could set $V(x)\equiv 1$ from the beginning if $Z$ can be supposed to be light-tailed, such as $\int_{|u|> 1}|u|^K \mu(du)<\infty$ for any $K>0$.
\end{rem}

\begin{thm}\label{t2} Let Assumptions \ref{Ass1} to \ref{AssStab}, \ref{A_diss}, and \ref{A_moment} hold. Assume also the following identifiability conditions:
\begin{itemize}
  \item (Global identifiability): for any $\theta\not=\theta_0$,
    $$
\Lambda_0(\theta)=\int_{\mathbb{R}}V(x)^2\Big(a(\theta; x)-a(\theta_0; x)\Big)^{\otimes 2}\, \pi(\theta_0, dx)>0;
$$
  \item (Local identifiability): $\Gamma_0$  is positive definite.
   \end{itemize}
  Then, the LAD estimator satisfies
  \begin{equation}
\sqrt{n}\,h_n^{1-1/\alpha}(\tes-\tz) \wc N\left(0,\,\left(2\phi_{\al}(0)\right)^{-2}\Sigma_{0}^{-1}\Gamma_{0}\Sigma_{0}^{-1}\right).
\nonumber
\end{equation}
\end{thm}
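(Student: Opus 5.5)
We follow the standard convexity-based argument for LAD estimators (Pollard's / Hjort–Pollard's convexity lemma), combined with a local stable approximation of the transition law. Write $r_n=\sqrt n\,h_n^{1-1/\al}$ and, for $u\in\RR^m$, $\theta_u=\tz+u/r_n$.

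\textbf{Localization.} Consider the centred and rescaled objective
$$
\mathbb{M}_n(u)=\frac{1}{n h_n^{-1/\al}\cdot h_n^{1/\al}}\cdot h_n^{-1/\al}\sumk V(X_{t_{k-1,n}})\Big(|\epsilon_k - D_k(u)|-|\epsilon_k|\Big)
$$
(normalized so that it is $O_p(1)$), where $\epsilon_k=X_{t_{k,n}}-F_{h_n}(\tz;X_{t_{k-1,n}})$ and $D_k(u)=F_{h_n}(\theta_u;X_{t_{k-1,n}})-F_{h_n}(\tz;X_{t_{k-1,n}})$. By \eqref{F5}, $D_k(u)=h_n r_n^{-1}\nabla_\theta a(\tz;X_{t_{k-1,n}})\cdot u\,(1+o(1))$.

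When the parametrization is not linear, $\mathbb{M}_n$ is not convex in $u$. We therefore first prove consistency of $\tes$. By the global identifiability condition, $(nh_n)^{-1}(\lad_n(\theta)-\lad_n(\tz))$ converges to a limit that vanishes only at $\tz$. To see this, use Knight's identity
$$
|x-y|-|x|=-y\,\sgn(x)+2\int_0^y(1_{x\le s}-1_{x\le 0})\,ds,
$$
together with the fact that $\epsilon_k$ has conditional law close to $h_n^{1/\al}\sigma(X_{t_{k-1,n}})$ times a standard $\al$-stable variable. After consistency, the linearization \eqref{F5} reduces the problem to the convex surrogate built from $\nabla_\theta a\cdot u$.

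\textbf{Decomposition.} By Knight's identity,
$$
\mathbb{M}_n(u)\approx -u\cdot \Delta_n + \mathbb{B}_n(u),
\qquad
\Delta_n=\frac{1}{\sqrt n}\sumk V(X_{t_{k-1,n}})\nabla_\theta a(\tz;X_{t_{k-1,n}})\sgn(\epsilon_k),
$$
where $\mathbb{B}_n(u)$ is the integral term.

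\textbf{Key probabilistic input.} The key input is the transition density representation from Appendix~\ref{sA1} (Theorem~\ref{tA2}). It gives $p_{h}(\tz;x,y)=h^{-1/\al}\sigma(x)^{-1}\phi_\al\big((y-\mathfrak f_h(\tz;x))/(h^{1/\al}\sigma(x))\big)+$ a residual of relative order $h^{\delta}$. Combined with \eqref{F0} and Remark~\ref{remAss2}, this yields
$$
\E[\sgn(\epsilon_k)\mid\mathcal F_{t_{k-1,n}}]=O(h_n^{\delta}W),
$$
because the median of the symmetric stable law is $0$ and the regressor error is of order $h^{1/\al+\delta_{regr}}$. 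It also yields that the conditional density of $\epsilon_k$ at $0$ equals $h_n^{-1/\al}\sigma^{-1}\phi_\al(0)(1+O(h_n^\delta))$.

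\textbf{Limits of the two terms.} The drift term of $\Delta_n$ is then $O(\sqrt n h_n^{\delta})\to0$ by \eqref{h_step}. Its martingale part converges stably to $MN(0,\Gamma_0)$ by the martingale CLT, with Riemann sums giving $\Gamma_0$; in Theorem~\ref{t2} one uses the ergodic theorem and Assumption~\ref{A_moment} for uniform integrability. The compensator of $\mathbb{B}_n(u)$ is
$$
\sumk V\,\frac{\phi_\al(0)}{\sigma\, h_n^{1/\al}}\,D_k(u)^2\to \phi_\al(0)\,u^\top\Sigma_0 u,
$$
and its variance vanishes.

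\textbf{Conclusion.} By the convexity lemma, $\mathbb{M}_n(u)\to -u\cdot\Delta+\phi_\al(0)u^\top\Sigma_0u$ uniformly on compacts, jointly with stable convergence of $\Delta_n$. Hence $r_n(\tes-\tz)=(2\phi_\al(0))^{-1}\Sigma_0^{-1}\Delta_n+o_p(1)$, which gives the stated mixed normal law.

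\textbf{Main obstacle.} The hard step is the uniform control of the density residual, needed to make the conditional median bias $o(n^{-1/2})$ after summation. This is where Assumptions~\ref{AssStab} and~\ref{AssReg} enter, via Theorem~\ref{tA2}. A secondary technical issue is the nonconvexity when $F$ is nonlinear in $\theta$, which is handled by the preliminary consistency step together with \eqref{F5}.
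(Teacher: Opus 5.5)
Your final stage matches the paper's Section~\ref{sec_proof.II}: the linearized regressor $F_{h_n}(\tz;x)+h_n\nabla_\theta a(\tz;x)\cdot(\theta-\tz)$, the convexity lemma for the resulting convex criterion, the limit $-u\cdot\Delta+\phi_\al(0)\,u^\top\Sigma_0u$, and the argmin step. The gap is your claim that plain consistency plus \eqref{F5} bridges to that stage.

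\textbf{The missing rate step.} The convexity lemma controls only the surrogate $\bar G_n$. To transfer its argmin to the nonconvex $G_n(u)=h_n^{-1/\al}(\lad_n(\tz+u/r_n)-\lad_n(\tz))$, you need $\sup_{u\in U_n}|G_n(u)-\bar G_n(u)|=o_P(1)$ on a set $U_n$ that contains $\hat u_n=r_n(\tes-\tz)$ with probability tending to one. By \eqref{F5}, at $\theta=\tz+u/r_n$ the linearization error has two parts. In the compensator of the quadratic term it is of order $|u|^2(|u|^\gamma r_n^{-\gamma}+h_n^\gamma)$. In the martingale part of the linear term it is of order $|u|(|u|^\gamma r_n^{-\gamma}+h_n^\gamma)$, pointwise. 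So it is uniformly $o(1)$ only on slowly growing balls $\{|u|\le K_n\}$; the paper uses $K_n=r_n^{1-\varsigma}$ with $\varsigma$ close to $1$.

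Consistency gives only $|\hat u_n|=o_P(r_n)$. On $\{|u|\le \eps r_n\}$ the error is of relative size $\eps^\gamma$ compared with $|u|^2$. Convexity of $\bar G_n$ gives only linear growth outside a fixed ball, so it cannot absorb this.

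You therefore need a genuine rate-of-convergence argument. The paper proves $\tes-\tz=O_P(r_n^{-\varsigma})$ for every $\varsigma<1$ (Section~\ref{s45}) from two ingredients:
\begin{itemize}
\item a quadratic lower bound $Y^P_n(\theta)\gtrsim|\theta-\tz|^2$ on a shrinking neighbourhood, truncated in $x$ and using the pointwise residual bound of Theorem~\ref{tA2};
\item a bound on the martingale parts that is \emph{uniform} in $\theta$, namely $\sup_\theta(|U^M_n(\theta)|+|Y^M_n(\theta)|)/|\theta-\tz|^\upsilon=O_P(r_n^{-1})$ (Lemma~\ref{l42}, via Kolmogorov--Chentsov).
\end{itemize}
It then takes $\varsigma$ close to $1$ in Proposition~\ref{p51}. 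A peeling or chaining argument with exponential martingale bounds could replace this, but some uniform modulus estimate is indispensable, and your proposal has none. Your consistency step also needs one, since for a nonconvex criterion pointwise convergence does not give uniform convergence in $\theta$.

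\textbf{An extra obstruction for $\al<1$.} Here $h_n^{1-1/\al}\to\infty$, so for fixed $|\theta-\tz|$ the standardized shifts $\kappa_{k,n}(\theta)$ diverge. The expected contrast then grows only linearly in $|\kappa_{k,n}|$, cf.\ \eqref{linear_quadratic}. The quadratic regime is $|\theta-\tz|\lesssim h_n^{1/\al-1}\to0$, so consistency does not even put $\tes$ where a quadratic expansion holds. The paper needs Proposition~\ref{p42}, giving $\tes-\tz=o_P(h_n^{1/\al-1})$, before the rate argument.

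\textbf{Smaller corrections.}
\begin{itemize}
\item Your consistency normalization $(nh_n)^{-1}$ is right only for $\al\le1$. For $\al>1$ the contrast is of order $nh_n^{2-1/\al}=o(nh_n)$, so your limit is identically zero. The correct scale is $nh_n(1\wedge h_n^{1-1/\al})$.
\item $\mathbb M_n$ should carry no factor $1/n$.
\item The compensator should carry $h_n^{-2/\al}$, not $h_n^{-1/\al}$; as written it tends to zero.
\item The median bias $O(h_n^\delta W)$ needs only the integral bound \eqref{Residue_1}. Theorem~\ref{tA2} is what the density-at-zero limit and the quadratic lower bounds require.
\end{itemize}
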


\begin{rem}
The convergence rate $\sqrt{n}h_n^{1-1/\alpha}$ is known to be optimal in some situations. Indeed, the local asymptotic normality (LAN) has been proved for several locally stable L\'{e}vy processes. Concerning the symmetric stable L\'{e}vy process, we refer to \cite{Mas09jjss} for the degenerate LAN with diagonal norming and \cite{BroMas18} for the non-degenerate LAN with asymmetric non-diagonal norming. For a general locally stable L\'{e}vy process with known index, \cite{IvaKulMas15} derived the LAN property
\footnote{There was a minor mistake about the asymptotic covariance matrix in \cite[Theorem 2.1]{IvaKulMas15}: the off-diagonal element should be non-null in case of asymmetric jumps.}. Some case studies, including other specific locally stable L\'{e}vy processes, can be found in \cite{Mas15LM}. As for related (locally) stable SDE models, \cite{CleGlo15} and \cite{CleGloNgu19} studied the local asymptotic mixed normality (LAMN) when the index is known.
\end{rem}

\section{Proofs, I: Consistency at sub-optimal rate}
\label{sec:proof.I}

In what follows, we denote
$$
r_n= r_n(\al) = \sqrt{n}\,h_n^{1-1/\alpha}.
$$
In this preparatory section, we prove that the LAD type estimator is consistent with the rate $r_n^{-\varsigma}$ for any $\varsigma\in (0,1)$:
\be\label{cons_rate}
\hat \theta_n-\theta_0=O_P(r_n^{-\varsigma}).
\ee
This statement is weaker than the main statement, which yields \eqref{cons_rate} with $\varsigma=1$. However, we will need this weaker statement in order to achieve the optimal rate $r_n^{-1}$, and its proof is already quite technically involved. Therefore, for the benefit of the reader, we discuss this proof separately, hoping that this will make the entire argument easier to follow.

We separate the proof of \eqref{cons_rate} into several steps, gradually improving the rate of consistency. In each of the steps, we will use essentially the same set of tools, which we will introduce now.

\subsection{Contrast function: definition and representation}

We define \emph{the contrast function} as
$$
H_n(\theta)={1\over r_n\sqrt{n} h_n}\Big(\lad_{n}(\theta)-\lad_{n}(\theta_0)\Big)
=\frac{1}{r_n^2}h_n^{-1/\al}\Big(\lad_{n}(\theta)-\lad_{n}(\theta_0)\Big),
$$
which is a.s. continuous in $\theta\in\overline{\Theta}$. 
Then $H_n(\hat\theta_n)\leq H_n(\theta_0)=0$ a.s., and in order to prove the consistency with some rate $\rho_n>0$ in the sense that
\be\label{cons_rate_generic}
\P(|\hat \theta_n-\theta_0|< \rho_n)\to 1,
\ee
it is sufficient to show that
\be\label{contrast_generic}
\P\left(\inf_{\theta:|\theta-\theta_0|\geq \rho_n} H_n(\theta)>0\right)\to 1.
\ee
To derive \eqref{contrast_generic} with various rates $\rho_n$, we will systematically use the following representation for the contrast function.
Denote
\begin{align}
\zeta_{k,n} &=h_n^{-1/\alpha}\left(X_{t_{k,n}}-F_{h_n}(\theta_0; X_{t_{k-1,n}})\right)V(X_{t_{k-1,n}}),
\nn\\
\kappa_{k,n}(\theta) &= h_n^{-1/\alpha}\left(F_{h_n}(\theta; X_{t_{k-1,n}})-F_{h_n}(\theta_0; X_{t_{k-1,n}})\right)V(X_{t_{k-1,n}}),
\nn
\end{align}
then we have
$$
\ba
H_n(\theta)
={1\over r_n^2}\sum_{k=1}^n\left(\left|\zeta_{k,n}-\kappa_{k,n}(\theta)\right|-\left|\zeta_{k,n}\right|\right).
\ea
$$
Define a non-negative function $q(x,v)$ by
\be\label{q}
  q(x,v)=\left\{
           \begin{array}{ll}
             (2v-2x)1_{x\in [0,v)}, & v\geq0; \\
             (2x-2v)1_{x\in (v,0]}, & v<0.
           \end{array}
         \right.
  \ee
A direct computation shows that $|q(x,v)-q(x,v')|\leq 2|v-v'|$
and  
\be\label{identity}
|x-v|-|x| = - v\sgn x + q(x,v),\quad x\ne0,
\ee
which leads to the decomposition
\be\label{decomp_H}
 H_n(\theta)=
 {1\over r_n^2}\sumk u_{k,n}(\theta)+{1\over r_n^2}\sumk y_{k,n}(\theta)=:U_n(\theta)+Y_n(\theta)
\ee
with
$$
u_{k,n}(\theta):=-\kappa_{k,n}(\theta)\sgn (\zeta_{k,n}),\quad y_{k,n}(\theta):=q(\zeta_{k,n},\kappa_{k,n}(\theta)).
$$
The heuristics behind this decomposition can be explained as follows: formally, we have
$$
v\sgn x=(|x|)'v, \quad v^{-2}q(x,v)\to \delta_0(x)=\frac12(|x|)'', \quad v\to 0,
$$
which means that $U_n(\theta)$ and $Y_n(\theta)$ essentially correspond to the linear and the quadratic terms in the Taylor expansion of the random function $H_n(\theta)$.  This seemingly vague explanation is actually quite informative: we will see later that, on a proper restriction of the set $\Theta$ where the value of the LAD estimator $\tes$ is contained, functions  $U_n(\theta)$ and $Y_n(\theta)$ are linear and quadratic, respectively, up to negligible summands. With this perspective in mind, we will call   $U_n(\theta)$ and $Y_n(\theta)$ the linear term and the quadratic term, respectively.

We further decompose the linear and quadratic terms into their martingale and predictable parts as follows:
\be\label{decomp_U}
U_n(\theta)={1\over r_n^2}\sumk u_{k,n}^M(\theta)+{1\over r_n^2}\sumk u_{k,n}^P(\theta)=:U_n^M(\theta)+U_n^P(\theta),
\ee
\be\label{decomp_Y}
Y_n(\theta)={1\over r_n^2}\sumk y_{k,n}^M(\theta)+{1\over r_n^2}\sumk y_{k,n}^P(\theta)=:Y_n^M(\theta)+Y_n^P(\theta),
\ee
with
$$
u_{k,n}^P(\theta):=\E[u_{k,n}(\theta)|\Ff_{k-1,n}], \quad u_{k,n}^M(\theta):=u_{k,n}(\theta)-u_{k,n}^P(\theta),
$$
$$
y_{k,n}^P(\theta):=\E[y_{k,n}(\theta)|\Ff_{k-1,n}],  \quad y_{k,n}^M(\theta):=y_{k,n}(\theta)-y_{k,n}^P(\theta).
$$
Later, we will show that:
\begin{itemize}
  \item for the linear term, its  martingale part $U_n^M(\theta)$ is the principal one, and the predictable part is negligible, in a sense;
  \item for the quadratic term, its predictable part $Y_n^P(\theta)$ is the principal one, and the martingale part is negligible, in a sense.
\end{itemize}

\subsection{Estimates for martingale and predictable parts}

We will repeatedly use, in slightly different settings, essentially the same argument in order to estimate the martingale and the `negligible predictable' parts in the decompositions \eqref{decomp_U}, \eqref{decomp_Y} and their analogues.
Therefore, for convenience of the reader, we give this argument separately.

\begin{lem}\label{l41} We have
\be\label{Delta_neglig}
\sup_{\theta\not=\theta_0}{|U_n^P(\theta)|\over |\theta-\theta_0|}=o_P(r_n^{-1}), \quad n\to \infty.
\ee
\end{lem}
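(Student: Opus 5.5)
Write $x=X_{t_{k-1,n}}$ and $\kappa_{k,n}(\theta)=h_n^{-1/\alpha}\bigl(F_{h_n}(\theta;x)-F_{h_n}(\theta_0;x)\bigr)V(x)$. This quantity is $\Ff_{k-1,n}$-measurable, so
$$
u_{k,n}^P(\theta)=-\kappa_{k,n}(\theta)\,\E[\sgn\zeta_{k,n}\mid\Ff_{k-1,n}]=-\kappa_{k,n}(\theta)\bigl(1-2\P(\zeta_{k,n}<0\mid\Ff_{k-1,n})\bigr),
$$
with the convention that $V(x)=0$ kills the term. The proof therefore splits into two factors. The first is a deterministic bound on $\kappa_{k,n}$. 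By the mean value theorem and \eqref{F1}, $|\kappa_{k,n}(\theta)|\le h_n^{1-1/\alpha}|\theta-\theta_0|\,W(x)V(x)$ uniformly in $\theta\in\Theta$. The second is a bound on the conditional "median defect"
$$
m_{k,n}:=\bigl|\P(X_{t_{k,n}}<F_{h_n}(\theta_0;x)\mid\Ff_{k-1,n})-\tfrac12\bigr|.
$$
With these two bounds,
$$
\sup_{\theta\ne\theta_0}\frac{|U_n^P(\theta)|}{|\theta-\theta_0|}\le \frac{2h_n^{1-1/\alpha}}{r_n^2}\sum_{k=1}^n W(X_{t_{k-1,n}})V(X_{t_{k-1,n}})\,m_{k,n}.
$$
Since $r_n=\sqrt n h_n^{1-1/\alpha}$, it suffices to show that $\frac{1}{\sqrt n}\sum_k WV(X_{t_{k-1,n}})\,m_{k,n}\to0$ in probability.

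The key step is to bound $m_{k,n}$ through the transition density. I would use the semi-explicit representation from Appendix \ref{sA1}: $p_h(\theta_0;x,y)$ equals a principal term $\frac{1}{\sigma(x)h^{1/\alpha}}\phi_\alpha\bigl(\frac{y-\mathfrak f_h(\theta_0;x)}{\sigma(x)h^{1/\alpha}}\bigr)$ plus a residual. By Theorem \ref{tA2}, the residual has integrated ($L^1$ in $y$) size $\lesssim h^{\delta'}\,\tilde W(x)$ for every $\delta'<\min(\delta_{drift},\delta_\sigma,\delta_\nu)$. The principal part is symmetric around $\mathfrak f_h(\theta_0;x)$, so it contributes exactly $1/2$ to the probability of $\{y<\mathfrak f_h\}$. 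Shifting the threshold from $\mathfrak f_h$ to $F_h(\theta_0;x)$ changes that probability by at most $\|\phi_\alpha\|_\infty h^{-1/\alpha}|F_h-\mathfrak f_h|/\sigma(x)\lesssim h^{\delta_{regr}}W(x)$, by \eqref{F0} and Remark \ref{remAss2} (which allows $\mathfrak f$ in place of $f$). Hence $m_{k,n}\lesssim h_n^{\delta}\bar W(X_{t_{k-1,n}})$ for the constant $\delta$ from Assumption \ref{AssStab}, with some weight $\bar W$. Here $\delta$ may be taken strictly below all four indices while keeping \eqref{h_step}, because \eqref{h_step} only requires $nh_n^{2\delta}\to0$ for that particular $\delta$.

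It follows that the sum is at most $\sqrt n h_n^{\delta}\cdot\frac1n\sum_k (WV\bar W)(X_{t_{k-1,n}})$. Here $\sqrt n h_n^\delta\to0$ by \eqref{h_step}, so it remains to show that the Riemann-type average is $O_P(1)$. For a finite horizon this is immediate, since $\sup_{t\le T}|X_t|<\infty$ a.s. and the weights are locally bounded. For an infinite horizon I would take expectations and use Assumption \ref{A_moment}, assuming the weights can be arranged so that $\bar W\lesssim W$ and hence $WV\bar W\le VW^2$ is uniformly integrable. Markov's inequality then finishes the argument.

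I expect the main obstacle to be the density step. One must check that the residual bound of Theorem \ref{tA2} holds uniformly, as a total-variation bound with polynomial weight in $x$, and that its weight is compatible with Assumption \ref{A_moment}. One must also handle the nonuniqueness of $f$ and the difference $\mathfrak f$ versus $F$ carefully, so that the threshold shift really costs only $h^{\delta_{regr}}$ relative to scale $h^{1/\alpha}$.
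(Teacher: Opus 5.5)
Your proposal is correct and matches the paper's proof of Lemma \ref{l41} step for step. Both arguments bound $|\kappa_{k,n}|$ via \eqref{F1}, split $\E[\sgn(\zeta_{k,n})\mid\Ff_{k-1,n}]$ using the decomposition \eqref{decomp}, control the principal part by the symmetry of $\phi_\alpha$ together with \eqref{F0} applied with $\mathfrak f$, and conclude from $\sqrt n\,h_n^\delta\to0$ and $\frac1n\sum_k VW^2(X_{t_{k-1,n}})=O_P(1)$. One small correction removes the obstacle you anticipate: the $L^1$ residual bound you need is \eqref{Residue_1} of Theorem \ref{tA1} (Theorem \ref{tA2} is the pointwise bound), and it holds uniformly in $x$ with no weight. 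So the residual contributes only $Ch_n^\delta$, and compatibility with Assumption \ref{A_moment} is automatic.
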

\begin{proof} Since $\kappa_{k,n}(\theta)$ is $\Ff_{k-1,n}$-measurable, we have
$$
\E[u_{k,n}(\theta)|\Ff_{k-1,n}]=-\kappa_{k,n}(\theta)\E[\sgn (\zeta_{k,n})|\Ff_{k-1,n}].
$$
Let us estimate the terms of this product separately. For the first term, we have simply by \eqref{F1},
\be\label{kappa0}\ba
|\kappa_{k,n}(\theta)|&=h_n^{-1/\alpha}|F_{h_n}(\theta; X_{t_{k-1,n}})-F_{h_n}(\theta_0; X_{t_{k-1,n}})|V(X_{t_{k-1,n}})
\\
&\leq C h_n^{-1/\alpha+1}|\theta-\theta_0|V(X_{t_{k-1,n}})W(X_{t_{k-1,n}}).
\ea
\ee
To estimate the second term,  we use the decomposition \eqref{decomp} for the transition density of the process $X$:
\begin{align}\label{decomp2}
\E[\sgn(\zeta_{k,n})|\Ff_{k-1,n}]
&=\int_{\Re}\sgn\left(y-F_{h_n}(\theta_0;X_{t_{k-1,n}})\over h^{1/\alpha}_n\right)\wt p_{h_n}(X_{t_{k-1,n}},y)\, dy \nn\\
&\qquad{}+\int_{\Re}\sgn\left(y-F_{h_n}(\theta_0;X_{t_{k-1,n}})\over h^{1/\alpha}_n\right)r_{h_n}(X_{t_{k-1,n}},y)\, dy,
\end{align}
and since $|\sgn(\cdot)|\leq 1,$  \eqref{Residue_1} guarantees that the second summand is bounded by $Ch_n^\delta$; here and below we assume $\delta$ to be fixed and satisfy all the relations in Assumption \ref{AssStab}. The first summand equals
$$\ba
\int_{\Re}\sgn&\left(y-F_{h_n}(\theta_0;X_{t_{k-1,n}})\over h^{1/\alpha}_n\right){1\over \sigma_{h_n}(x) h_n^{1/\alpha}}\phi_{\al}\left({y-\mathfrak{f}_{h_n}(\theta_0;X_{t_{k-1,n}})\over \sigma_{h_n}(X_{t_{k-1,n}})h_n^{1/\alpha}}\right)\, dy
\\&= \int_{\Re}\sgn\left(z-\frac{F_{h_n}(\theta_0;X_{t_{k-1,n}})-\mathfrak{f}_{h_n}(\theta_0;X_{t_{k-1,n}})}{\sigma_{h_n}(X_{t_{k-1,n}})
h_n^{1/\alpha}}\right)\phi_{\al}(z)\, dz
\\&\leq C\frac{|F_{h_n}(\theta_0;X_{t_{k-1,n}})-\mathfrak{f}_{h_n}(\theta_0;X_{t_{k-1,n}})|}{\sigma_{h_n}(X_{t_{k-1,n}})
h_n^{1/\alpha}},
\ea
$$
where, in the last inequality, we have used that $\phi_\al$ is symmetric and bounded. Recall that \eqref{F0} holds true with $f_t$ replaced by $\mathfrak{f}_t$; see Remark \ref{remAss2} and the discussion in Appendix \ref{sA1}. Since $\sigma_t(x)$ is bounded away from 0, the first summand in \eqref{decomp2} is bounded by
$$
Ch_n^{\delta_{regr}}W(X_{t_{k-1,n}})\leq Ch_n^{\delta}W(X_{t_{k-1,n}}).
$$
Combining the above bounds, we obtain
$$\ba
\sup_{\theta\not=\theta_0}{|U_n^P(\theta)|\over |\theta-\theta_0|}&\leq {C\over r_n^2}h_n^{\delta}h_n^{-1/\alpha+1}\sumk V(X_{t_{k-1,n}})W(X_{t_{k-1,n}})^2
\\&\leq {Cn^{1/2} h_n^{\delta} \over r_n}\left({1\over n}\sumk V(X_{t_{k-1,n}})W(X_{t_{k-1,n}})^2\right).
\ea
$$
Observe that
$$
{1\over n}\sumk V(X_{t_{k-1,n}})W(X_{t_{k-1,n}})^2=O_P(1), \quad n\to \infty.
$$
In the finite observation horizon case, this is obvious because the weight functions are locally bounded and $X$ has a.s. bounded trajectories; in the infinite observation horizon case, this follows from Assumption \ref{A_moment}. Thus,
$$
\sup_{\theta\not=\theta_0}{|U^P_n(\theta)|\over |\theta-\theta_0|}=O_P(n^{1/2} h_n^{\delta} r_n^{-1}),\quad n\to \infty
$$
and applying the statistic stability Assumption \ref{AssStab}, we complete the proof.
\end{proof}

\begin{lem}\label{l42} 
\begin{itemize}
              \item[1.] For any $\theta\not=\theta_0$, we have
              \be\label{M_neglig_1}
{|U_n^M(\theta)|+|Y_n^M(\theta)|\over |\theta-\theta_0|}=O_P(r_n^{-1}), \quad n\to \infty.
\ee
              \item[2.] For any $\upsilon\in (0,1)$, we have
              \be\label{M_neglig_2}
\sup_{\theta\not=\theta_0}{|U_n^M(\theta)|+|Y_n^M(\theta)|\over |\theta-\theta_0|^\upsilon}=O_P(r_n^{-1}), \quad n\to \infty.
\ee
            \end{itemize}
\end{lem}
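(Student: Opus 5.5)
For part 1, fix $\theta\neq\theta_0$ and bound the conditional second moments of the martingale differences. Both $u^M_{k,n}(\theta)$ and $y^M_{k,n}(\theta)$ are martingale differences with respect to $(\Ff_{k,n})$. We have $|u_{k,n}|\le|\kappa_{k,n}|$, and $|y_{k,n}|=|q(\zeta_{k,n},\kappa_{k,n})|\le 2|\kappa_{k,n}|$ because $q(x,0)=0$ and $q$ is $2$-Lipschitz in $v$. Therefore
$$
\E\big[(u^M_{k,n})^2+(y^M_{k,n})^2\,\big|\,\Ff_{k-1,n}\big]\le C\kappa_{k,n}(\theta)^2\le C h_n^{2-2/\al}|\theta-\theta_0|^2 V(X_{t_{k-1,n}})^2W(X_{t_{k-1,n}})^2,
$$
using \eqref{kappa0}. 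Summing, the predictable quadratic variation of $r_n^{-2}\sumk(u^M_{k,n}+y^M_{k,n})/|\theta-\theta_0|$ is bounded by
$$
C r_n^{-4}\, n h_n^{2-2/\al}\cdot\frac1n\sumk V^2W^2(X_{t_{k-1,n}})=C r_n^{-2}\,O_P(1).
$$
The average is $O_P(1)$ as in Lemma \ref{l41}: in the finite horizon case by boundedness of the trajectories, and in the ergodic case because $V$ is bounded and Assumption \ref{A_moment} applies. Lenglart's inequality, or localization by stopping when the running sum of predictable variances exceeds a large level followed by Chebyshev, then gives \eqref{M_neglig_1}.

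For part 2, which asks for uniformity in $\theta$ with the weaker normalization $|\theta-\theta_0|^\upsilon$, I would use a chaining/Kolmogorov-type continuity argument. Set $M_n(\theta)=r_n(U^M_n(\theta)+Y^M_n(\theta))$, so that $M_n(\theta_0)=0$. The increments are controlled by the same mechanism: $|\kappa_{k,n}(\theta)-\kappa_{k,n}(\theta')|\le Ch_n^{1-1/\al}|\theta-\theta'|VW$ by \eqref{F1}, and
$$
|u_{k,n}(\theta)-u_{k,n}(\theta')|+|y_{k,n}(\theta)-y_{k,n}(\theta')|\le C|\kappa_{k,n}(\theta)-\kappa_{k,n}(\theta')|.
$$
Hence the increments $M_n(\theta)-M_n(\theta')$ are martingales whose predictable quadratic variation is at most $|\theta-\theta'|^2 A_n$, where $A_n=\frac1n\sumk V^2W^2$ is a single random variable independent of $\theta,\theta'$ and $O_P(1)$.

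To get moment bounds of high order, as Kolmogorov's criterion needs in dimension $m$, I would use Burkholder's inequality for $p$-th moments. Since the summands are bounded by $C r_n^{-1}\sqrt n^{-1}\cdots$, with each term individually of size $n^{-1/2}|\theta-\theta'|VW$, we obtain
$$
\E\big[|M_n(\theta)-M_n(\theta')|^{p}\,1_{\{\text{localized}\}}\big]\le C_p|\theta-\theta'|^{p}
$$
for any $p$. To avoid needing high moments of $W$, I would localize. One option is a truncation $V W\le R$ on the event where the running sum stays bounded. Alternatively, in the finite horizon case, stop at the exit time of $X$ from a large ball $[-R,R]$, which has probability close to $1$. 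In the ergodic case I would localize on the $\Ff_{k-1,n}$-measurable indicator $\{VW(X_{t_{k-1,n}})\le R_n\}$ and handle the remainder separately. With $p>m/(1-\upsilon)$, the Garsia–Rodemich–Rumsey / Kolmogorov lemma on the bounded domain $\Theta$ gives
$$
\sup_{\theta\ne\theta'}\frac{|M_n(\theta)-M_n(\theta')|}{|\theta-\theta'|^{\upsilon}}=O_P(1),
$$
and taking $\theta'=\theta_0$ yields \eqref{M_neglig_2}.

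The main obstacle is obtaining the high-order moment bounds uniformly in $n$ when only a $p$-th moment with $p$ slightly above $1$ is available in Assumption \ref{A_moment}, which is needed because the summands are not uniformly bounded. I would first try a localization on the predictable quantities $V W(X_{t_{k-1,n}})$. The truncated martingale has bounded increments of order $n^{-1/2}R$, so Burkholder–Rosenthal gives all moments. The remaining part is nonzero only on an event whose probability I would bound via Chebyshev and the averaged moment condition, letting $R\to\infty$ after $n\to\infty$.
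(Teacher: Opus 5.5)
Your architecture is the paper's. For part~1 you bound the conditional second moments by $\kappa_{k,n}(\theta)^2$ and then localize or apply Lenglart, and that part is fine. For part~2 you use Burkholder $p$-th moment bounds on increments plus Kolmogorov--Chentsov with $p(1-\upsilon)>m$. The gap is the localization in part~2 in the infinite-horizon case, which is exactly the step you flagged.

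The plan you commit to is to multiply each summand by the predictable indicator $1_{\{VW(X_{t_{k-1,n}})\le R\}}$ and claim the remainder is nonzero only on a small-probability event. That does not work. The remainder is nonzero on $\{\max_{k\le n}VW(X_{t_{k-1,n}})>R\}$. Whenever $VW$ is not $\pi(\theta_0,\cdot)$-essentially bounded (the assumptions allow this, since only $V$ is assumed bounded), the law of large numbers gives $\frac1n\sumk 1_{\{VW(X_{t_{k-1,n}})>R\}}\to\pi(\theta_0,\{VW>R\})>0$. So for every fixed $R$, exceedances occur with probability tending to one. Chebyshev with Assumption~\ref{A_moment} controls only the average frequency, $\frac1n\sumk\P(VW(X_{t_{k-1,n}})>R)\le CR^{-2p}$; the union bound gives $CnR^{-2p}$.

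Taking $R=R_n\to\infty$ fast enough to kill that event makes your Burkholder bound for the truncated field grow with $R_n$, so you lose $O_P(1)$. Nor can the remainder be bounded crudely. Without cancellation, $r_n^{-1}\sum_k|\kappa_{k,n}(\theta)|$ is of order $\sqrt n\,|\theta-\theta_0|$ times an average of $VW$. Controlling the cancellation uniformly in $\theta$ brings back the same high-moment problem.

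The fix is the device you already name in part~1: stop on the running sum of the predictable weights rather than truncating individual terms. Put $\tau_{n,R}=\min\{l:\sum_{k=0}^{l}V(X_{t_{k,n}})^2W(X_{t_{k,n}})^2>R^2n\}$, and let $M_{l,n}(\theta)=r_n^{-1}\sum_{k\le l}(u^M_{k,n}(\theta)+y^M_{k,n}(\theta))$.

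1. The $k$-th squared increment of $M_{\cdot,n}(\theta)-M_{\cdot,n}(\theta')$ is bounded pathwise by $Cn^{-1}|\theta-\theta'|^2V(X_{t_{k-1,n}})^2W(X_{t_{k-1,n}})^2$, which is $\Ff_{k-1,n}$-measurable.
2. Hence the quadratic variation of the martingale stopped at $\tau_{n,R}\wedge n$ is at most $CR^2|\theta-\theta'|^2$ deterministically.
3. Burkholder--Davis--Gundy then gives $\E|M_{\tau_{n,R}\wedge n,n}(\theta)-M_{\tau_{n,R}\wedge n,n}(\theta')|^p\le C_pR^p|\theta-\theta'|^p$ for every $p$, with no moment condition on $W$.
4. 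The event $\{\tau_{n,R}>n\}$ does not depend on $\theta$, and the stopped and unstopped fields agree on it.
5. $\liminf_n\P(\tau_{n,R}>n)\to1$ as $R\to\infty$, because $\frac1n\sumk V^2W^2(X_{t_{k-1,n}})=O_P(1)$. This needs only a first moment, supplied by Assumption~\ref{A_moment} and boundedness of $V$.

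Kolmogorov--Chentsov applied to the stopped field then yields part~2; this is precisely the paper's argument. In the finite-horizon case your exit-time localization is fine as stated, since $\sup_{t\le T}|X_t|<\infty$ a.s.
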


\begin{rem} The denominator $|\theta-\theta_0|^\upsilon$ in the \emph{uniform}  bound is worse than the term $|\theta-\theta_0|$ which we have in the
\emph{individual} one and which is the same as the one from the previous lemma.  This is one of the main sources of technical complications we will experience later, and it seems to be inevitable because of the method of the proof based on the Kolmogorov-Chentsov theorem.
See also the first paragraph in Section \ref{sec_proof.II}.
\end{rem}

\begin{proof}[Proof of Lemma \ref{l42}] 
1.  We have 
$$
|u_{k,n}^M(\theta)|\leq 2|\kappa_{k,n}(\theta)|, \quad |y_{k,n}^M(\theta)|\leq |\kappa_{k,n}(\theta)|
$$
because
$$
 |q(x,v)|\leq 2|v|, \quad |v\sgn(x)|\leq |v|,
$$
and $\kappa_{k,n}(\theta)$ is $\Ff_{k-1,n}$-measurable. From now on, we will prove the required bounds for $U_n^M(\theta)$ only; the proof for  $Y_n^M(\theta)$ will be literally the same.

In the finite observation horizon case, fix $R>0$ and denote
$$
\tau_R=\inf\{t\leq T: |X_{t_{k-1,n}}|>R\},
$$
with the usual convention $\inf\emptyset=T$. Then,
$$\ba \E[U_{n}^M(\theta)^21_{\tau_R=T}]
&={1\over r_n^4}\E\sum_{k: t_{n, k-1}< \tau_R} \E[u_{k,n}^M(\theta)^2|\Ff_{k-1,n}]
\\&\leq {4nh_n^{2-2/\alpha}\over r_n^4} |\theta-\tz|^2 sup_{|x|\leq R}W^2(x) \\
&\leq C_R{n h_n^{2-2/\alpha}\over r_n^4}|\theta-\tz|^2
={C_R\over r_n^2}|\theta-\tz|^2.
\ea
$$
This gives an analogue of \eqref{M_neglig_1} for $U_n^M(\theta)$ on the set $\{\tau_R=T\}$, and because $\{\tau_R=T\}\to \Omega$ as $R\to \infty$, actually proves \eqref{M_neglig_1} for $U_n^M(\theta)$.

In the infinite observation horizon case, in the above estimate, we should combine the localization argument with the Assumption \ref{A_moment} on the weights; we omit the details, referring to the second part of the proof where the same issue is treated in a more complicated setting.

\medskip

2. In order to get the uniform estimate, we will apply the Kolmogorov-Chentsov criterion for the existence of a H\"older continuous modification. For that purpose, we need to estimate the higher-order moments of the (properly localized) differences of $U_n^M(\theta)$. 
Namely for any $\theta, \theta'$, we consider the martingale
$$
U_{r,n}^M(\theta, \theta') := \sum_{k=1}^r \Big(u_{k,n}^M(\theta)-u_{k,n}^M(\theta')\Big), \quad r\geq 0.
$$
Recall that $u_{k,n}^M(\theta_0)=0$ so that
$$
U_{n}^M(\theta) = r_n^{-2} U_{n,n}^M(\theta, \theta_0).
$$
We have
\begin{align}
\left(u_{k,n}^M(\theta)-u_{k,n}^M(\theta')\right)^2
&\leq 4(\kappa_{k,n}(\theta)-\kappa_{k,n}(\theta'))^2 
\nn\\
&\leq Ch_n^{2-2/\alpha} |\theta-\theta'|^2 V(X_{t_{k-1,n}})^2 W(X_{t_{k-1,n}})^2.
\nn
\end{align}
Let $\tau$ be a stopping time, then by the Burkholder-Davis-Gundy inequality applied to the martingale $M_{\cdot,n}(\theta, \theta')$ stopped at $\tau$, we have for any $p>1$
$$
\E\left[\left|U^M_{\tau\wedge n, n}(\theta, \theta')\right|^p\right]
\leq C_ph_n^{p-p/\alpha}|\theta-\theta'|^p\,
\E\left[ \left|\sum_{k=1}^\tau V(X_{t_{k-1,n}})^2 W(X_{t_{k-1,n}})^2\right|^{p/2} \right].
$$
Fix $R>0$ and take in the above inequality
$$
\tau=\tau_{n,R}=\min\left\{l: \sum_{k=0}^l V(X_{t_{k,n}})^2W(X_{t_{k,n}})^2> R^2 n \right\}.
$$
Then,
\be\label{Moment_for_Kolmogorov}\ba
\E \left[ |U^M_n(\theta)-U^M_n(\theta')|^p 1_{\{\tau_{n, R}>n\}} \right] &\leq r_n^{-2p}\, \E\left[ \left|U^M_{\tau\wedge n, n}(\theta, \theta')\right|^p\right] 
\\&\leq {C_p h_n^{p-p/\alpha}\over r_n^{2p}}|\theta- \theta'|^p \E\left[\left|\sum_{k=1}^{\tau_{n,R}} V(X_{t_{k-1,n}})^2W(X_{t_{k-1,n}})^2\right|^{p/2}\right]
\\&\leq C_p R^p {h_n^{p-p/\alpha}\over r_n^{2p}} n^{p/2} |\theta- \theta'|^p=C_p R^p r_n^{-p}|\theta- \theta'|^p.
\ea
\ee
Recall that $m$ denotes the dimension of $\theta$.
For any fixed $\upsilon\in (0,1)$ we can choose $p$ large enough for $p-m>\upsilon p$, and for such $p$ the Kolmogorov-Chentsov theorem yields
\be\label{Kolmogorov_Hol}
\E\left[ \left(\sup_{\theta\not=\wt \theta}{|U^M_n(\theta)-U^M_n(\theta')|\over |\theta-\theta'|^\upsilon}\right)^p 1_{\{\tau_{n, R}>n\}} \right] 
\leq R^p C_{p,\upsilon,m} r_n^{-p}.
\ee
We complete the proof by observing that
$$
\liminf_{n\to \infty}\P(\tau_{n, R}>n)\to 1, \quad R\to \infty,
$$
because
$$
{1\over n}\sumk V(X_{t_{k-1,n}})^2W(X_{t_{k-1,n}})^2=O_P(1), \quad n\to \infty
$$
by Assumption \ref{A_moment} and our convention that $V$ is bounded; see Remark \ref{rem31}.
\end{proof}

\subsection{Basic consistency}

\begin{prp}\label{p41} We have
\begin{equation}
\hat \theta_n-\theta_0=o_P(1), \quad n\to \infty.
\label{cons}
\end{equation}
\end{prp}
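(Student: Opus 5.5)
We use the criterion \eqref{contrast_generic} with a fixed radius $\rho_n\equiv\rho>0$: for every $\rho>0$ it suffices to show
$$
\P\Big(\inf_{\theta\in\Theta:\,|\theta-\theta_0|\geq \rho} H_n(\theta)>0\Big)\to 1.
$$
We work with the decomposition $H_n=U_n^M+U_n^P+Y_n^M+Y_n^P$ from \eqref{decomp_H}--\eqref{decomp_Y}. Since $\Theta$ is bounded, $|\theta-\theta_0|\le C$. Lemma~\ref{l41} and Lemma~\ref{l42}(2) then give
$$
\sup_{\theta}\big(|U_n^P(\theta)|+|U_n^M(\theta)|+|Y_n^M(\theta)|\big)=O_P(r_n^{-1})=o_P(1).
$$
So the whole matter reduces to a uniform positive lower bound for the predictable quadratic part $Y_n^P(\theta)$ on $\{|\theta-\theta_0|\ge\rho\}$.

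\textbf{Lower bound for $Y_n^P$.} Write $x=X_{t_{k-1,n}}$. We compute $y_{k,n}^P(\theta)=\E[q(\zeta_{k,n},\kappa_{k,n}(\theta))|\Ff_{k-1,n}]$ using the density decomposition \eqref{decomp}, $p=\wt p+r$.
\begin{itemize}
\item The residual $r_{h_n}$ contributes at most $Ch_n^\delta|\kappa_{k,n}|$, by \eqref{Residue_1} and $|q(x,v)|\le 2|v|$.
\item In the main stable part, the shift $(F_{h_n}-\mathfrak f_{h_n})/h_n^{1/\al}=O(h_n^{\delta}W(x))$ moves the integral by at most $O(h_n^\delta W(x)|\kappa_{k,n}|)$, using $|q(x+s,v)-q(x,v)|\lesssim |v|$ and boundedness of $\phi_\al$.
\end{itemize}
What remains is $g_\sigma(v)=\int q(z,v)\,\sigma^{-1}\phi_\al(z/\sigma)\,dz$ with $\sigma=\sigma_{h_n}(x)$ and $v=\kappa_{k,n}(\theta)$. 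Since $\phi_\al$ is positive, continuous and unimodal and $\sigma$ is bounded above and below, there is $c>0$ such that $g_\sigma(v)\ge c\min(v^2,|v|)\ge c'\,v^2/(1+|v|)$.

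\textbf{Asymptotic size of $\kappa_{k,n}$.} By \eqref{F3}, $\kappa_{k,n}(\theta)\approx h_n^{1-1/\al}V(x)\big(a(\theta;x)-a(\theta_0;x)\big)$, which is $O(h_n^{1-1/\al})$ locally in $x$. For $\al>1$ this tends to zero and the quadratic regime applies. Then
$$
Y_n^P(\theta)\ge \frac{c}{r_n^2}\,h_n^{2-2/\al}\sum_k V(x)^2\big(a(\theta;x)-a(\theta_0;x)\big)^2 -o_P(1)=\frac{c}{n}\sum_k V(x)^2\big(a(\theta;x)-a(\theta_0;x)\big)^2-o_P(1).
$$
The Riemann/ergodic sum converges, uniformly in $\theta$, to a multiple of $\Lambda_0(\theta)$.
\begin{itemize}
\item For uniformity in the finite-horizon case, use localization by $\tau_R$ as in Lemma~\ref{l42}, the H\"older continuity of $X\mapsto a(\theta;X)$ together with c\`adl\`ag paths, and equicontinuity in $\theta$ from \eqref{F1}.
\item In the ergodic case, use the ergodic theorem plus Assumption~\ref{A_moment} for uniform integrability, again with equicontinuity in $\theta$.
\end{itemize}

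When $\al\le1$, $|\kappa_{k,n}|$ may be large and the linear regime $c|v|$ applies. Then $Y_n^P\gtrsim r_n^{-2}\sum_k\min(\kappa^2,|\kappa|)$, which only improves the lower bound relative to $h_n^{2-2/\al}\sum \min(\cdot)$. We will handle this by truncating: the bound $\min(v^2,|v|)\ge v^2\wedge \epsilon^2$ and scaling still yield a limit dominated from below by a positive functional vanishing only at $\theta_0$.

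\textbf{Conclusion.} Continuity of $\theta\mapsto\Lambda_0(\theta)$, compactness of $\{\theta\in\overline\Theta:|\theta-\theta_0|\ge\rho\}$ and global identifiability give $\inf_{|\theta-\theta_0|\ge\rho}\Lambda_0(\theta)>0$ a.s. Combined with the $o_P(1)$ negligibility above, this proves the claim.

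The main obstacle is the uniform-in-$\theta$ lower bound for $Y_n^P$: extracting the quadratic form through the nonlinear function $g_\sigma$ while controlling the error terms uniformly. We will do this via equicontinuity of $\theta\mapsto Y_n^P(\theta)$, which follows from $|q(x,v)-q(x,v')|\le2|v-v'|$ and \eqref{F1}, combined with a finite $\epsilon$-net of the compact parameter set.
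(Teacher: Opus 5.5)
Your argument works for $\alpha\ge1$, modulo a spatial cutoff to $\{|X_{t_{k-1,n}}|\le 2R\}$ in the ergodic case so that you stay in the quadratic regime. However, the reduction you make at the outset breaks down for $\alpha<1$.

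Since $0\le q(x,v)\le 2|v|$, and by \eqref{F1} $|\kappa_{k,n}(\theta)|\le Ch_n^{1-1/\alpha}|\theta-\theta_0|V(X_{t_{k-1,n}})W(X_{t_{k-1,n}})$, you have $0\le Y_n^P(\theta)\le C h_n^{1/\alpha-1}\,\frac1n\sum_{k=1}^n V(X_{t_{k-1,n}})W(X_{t_{k-1,n}})$. So for $\alpha<1$ the quadratic part itself tends to $0$ in probability, uniformly in $\theta$. There is no $n$-independent positive lower bound for $Y_n^P$ on $\{|\theta-\theta_0|\ge\rho\}$. Knowing that $U_n^P,U_n^M,Y_n^M$ are $O_P(r_n^{-1})=o_P(1)$ therefore proves nothing. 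What you must show is that these terms are small compared with the true size $h_n^{1/\alpha-1}$ of $Y_n^P$. Your remark that the linear regime ``only improves the lower bound'' has it backwards: the normalization $r_n^{-2}$ is calibrated to the quadratic regime, and $r_n^{-2}\sum_k|\kappa_{k,n}(\theta)|$ is only of order $h_n^{1/\alpha-1}$.

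The truncation you propose makes matters worse. With $\min(v^2,|v|)\ge v^2\wedge\epsilon^2$ and $|\kappa_{k,n}|\to\infty$, your lower bound is of order $\epsilon^2 r_n^{-2}n=\epsilon^2h_n^{2/\alpha-2}$. This dominates an $O_P(r_n^{-1})$ remainder only if $\sqrt n\,h_n^{1/\alpha-1}\to\infty$. That fails for $T_n\equiv T$, $h_n=T/n$ and $\alpha\le 2/3$, a case the assumptions cover (take $\alpha=1/2$, a linear drift with the exact regressor, Lipschitz $\sigma$, $\nu=0$).

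The missing idea is to keep the linear growth and normalize correctly. On $\{|X_{t_{k-1,n}}|\le 2R\}$, with $x=X_{t_{k-1,n}}$, \eqref{F3} gives $|\kappa_{k,n}|\wedge\kappa_{k,n}^2\ge c_R\big(h_n^{1-1/\alpha}\wedge h_n^{2-2/\alpha}\big)\big\{(a(\theta;x)-a(\theta_0;x))^2V(x)^2-C_Rh_n^{\gamma}\big\}$. Hence $Y_n^P(\theta)\ge c_R(1\wedge h_n^{1/\alpha-1})(\Lambda_{n,R}(\theta)-C_Rh_n^\gamma)$, as in \eqref{lower_3}. Since $(1\wedge h_n^{1/\alpha-1})r_n$ equals $\sqrt n$ for $\alpha\le1$ and $r_n$ for $\alpha\ge1$, all remainders are $o_P(1\wedge h_n^{1/\alpha-1})$.

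With this normalization, your alternative treatment of the residual also goes through. You use the integral bound \eqref{Residue_1}, whereas the paper uses the pointwise bound \eqref{Residue_2} and restricts to a window where $p_{h_n}\ge ch_n^{-1/\alpha}$. The residual and shift errors you produce are $O_P(h_n^{\delta+1/\alpha-1})=o_P(1\wedge h_n^{1/\alpha-1})$, because $\delta>1/2$. This is a genuine small gain, since it avoids Assumption~\ref{AssReg} at this step.
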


\begin{proof} Let us estimate from below the predictable part  $Y_n^P(\theta)$. For that, we will perform the following `double truncation' procedure.   First, note that $q(x,v)\geq 0$ and thus for arbitrary $R>0$,
$$
Y_n^P(\theta)\geq Y_{n,R}^P(\theta)={1\over r_n^2}\sumk y_{n,k}^P(\theta)\,\chi(R^{-1}X_{t_{k-1,n}}),
$$
where $\chi:\Re\to [0,1]$ is a continuous function such that
$$\chi(r)=\left\{
    \begin{array}{ll}
      1, & |r|\leq 1; \\
      0, & |r|\geq 2.
    \end{array}
  \right.
$$
Second, we perform the truncation of the domain of integration in the representation
\begin{align}\label{decomp3}
y_{n,k}^P(\theta)=\E[q(\zeta_{k,n}, \kappa_{k,n}(\theta))|\Ff_{k-1,n}]
=\int_{\Re}q\left({y-F_{h_n}(\theta_0;X_{t_{k-1,n}})\over h^{1/\alpha}_n},\kappa_{k,n}(\theta)\right)p_{h_n}(X_{t_{k-1,n}},y)\, dy.
\end{align}
Namely, instead of integrating with respect to $y\in \Re$, we integrate w.r.t the smaller domain 
\begin{equation}
    \left\{y:\,|y-F_{h_n}(\theta_0;X_{t_{k-1,n}})|\leq h_n^{1/\alpha}\right\}.
\end{equation}
We use decomposition \eqref{decomp} for the transition density $p_{h_n}(X_{t_{k-1,n}},y)$, and observe that on this domain we have the following:
\begin{itemize}
  \item  Since $\sigma_t(x)$ is bounded and separated from zero, there exists $c>0$ such that
$$
\wt p_{h_n}(X_{t_{k-1,n}},y)\geq 2c h^{-1/\alpha}_n;
$$
  \item By Theorem \ref{tA2}, the bound \eqref{Residue_2} holds true, and thus 
$$
|r_{h_n}(X_{t_{k-1,n}},y)|\leq C  h^{-1/\alpha+\delta'}_n.
$$
\end{itemize}
Then, for $n$ large enough such that $C h^{\delta'}_n<c$, we have on this domain
$$
p_{h_n}(X_{t_{k-1,n}},y)\geq  \wt  p_{h_n}(X_{t_{k-1,n}},y)-|r_{h_n}(X_{t_{k-1,n}},y)|\geq ch_n^{-1/\al},
$$
which gives the bound
$$\ba
y_{n,k}^P(\theta)&\geq c h^{-1/\alpha}_n\int_{|y-F_{h_n}(\theta_0;X_{t_{k-1,n}})|\leq h_n^{1/\alpha}}q\left({y-F_{h_n}(\theta_0;X_{t_{k-1,n}})\over h^{1/\alpha}_n},\kappa_{k,n}(\theta)\right)\, dy \nn\\
&=c \int_{[-1,1]}q\left(z,\kappa_{k,n}(\theta)\right)\, dz.
\ea
$$
By a direct calculation, we can verify that
\be\label{linear_quadratic}
\int_{[-1,1]}q(z, v)\, dz\geq c\left( |v|\wedge v^2\right), \qquad v\in\mbbr.
\ee
Summarizing all the above, we obtain that
\be\label{lower_1}
Y_n^P(\theta)\geq {c\over r_n^2}\sumk  \Big(|\kappa_{k,n}(\theta)|\wedge |\kappa_{k,n}(\theta)|^2\Big)\chi(R^{-1}X_{t_{k-1,n}}),\quad R\geq 0
\ee
with the constant $c$ not depending on $R$.

Using \eqref{F3} and the fact that $W(\cdot)$ is locally bounded, we get that, on the set $\{|X_{t_{k-1,n}}|\leq 2R\}$,
\be\label{kappa_low1}
|\kappa_{k,n}(\theta)|\geq h_n^{-1/\alpha+1}|a(\theta;X_{t_{k-1,n}})-a(\theta_0;X_{t_{k-1,n}})|V(X_{t_{k-1,n}})-C_Rh_n^{-1/\alpha+1+\gamma}.
\ee
In addition, $a(\theta;x)-a(\theta_0;x)$ is bounded on $\Theta\times[-R,R]$ and $V(x)$ is bounded on $[-R,R]$, hence
$$
|a(\theta;x)-a(\theta_0;x)|V(X_{t_{k-1,n}})\geq c_R(a(\theta;x)-a(\theta_0;x))^2V(X_{t_{k-1,n}})^2.
$$
Then, for $n$ large enough, \eqref{lower_1} and \eqref{kappa_low1} together with the Cauchy inequality $(a-b)^2\geq a^2/2-b^2$ give
\be\label{lower_2}\ba
Y_{n}^P(\theta)&\geq c\,{h_n^{-1/\alpha+1}\wedge h_n^{-2/\alpha+2}\over r_n^2}
\\
&{}\qquad \times \sumk \bigg[ \Big(a(\theta;X_{t_{k-1,n}})-a(\theta_0;X_{t_{k-1,n}})\Big)^2V(X_{t_{k-1,n}})^2 -C_Rh_n^{\gamma}\bigg] \chi(R^{-1}X_{t_{k-1,n}})
\\&\ge c\,{h_n^{-1/\alpha+1}\wedge h_n^{-2/\alpha+2}\over r_n^2} \Big(n c_R  \Lambda_{n,R}(\theta)-nC_Rh_n^{\gamma}\Big),
\ea
\ee
where we denoted
$$
\Lambda_{n,R}(\theta)=\frac{1}{n} \sumk \Big(a(\theta;X_{t_{k-1,n}})-a(\theta_0;X_{t_{k-1,n}})\Big)^2V(X_{t_{k-1,n}})^2\chi(R^{-1}X_{t_{k-1,n}}).
$$
Note that
$$
n{h_n^{-1/\alpha+1}\wedge h_n^{-2/\alpha+2}\over r_n^2} =1\wedge h_n^{1/\alpha-1},
$$
hence we have proved the following family of lower bounds: for any $R>0$ there exist $c_R, C_R>0$, and $N_R\in \mathbb{N}$ such that, for $n\geq N_R$,
\be\label{lower_3}
Y_{n}^P(\theta)\geq c_R\left(1\wedge h_n^{1/\alpha-1}\right)\left(\Lambda_{n,R}(\theta)-C_Rh_n^{\gamma}\right), \quad \theta\in \Theta.
\ee

We are now ready to prove \eqref{cons}. We have
$$
 \Big(1\wedge h_n^{1/\alpha-1}\Big)  r_n
 =\left\{
\begin{array}{ll}
\sqrt{n}, & \alpha\in (0,1] \\
r_n, & \alpha\in [1,2)
\end{array}
\right.\to \infty, \quad n\to \infty.
$$
Since $\Theta$ is bounded, this yields, together with  \eqref{Delta_neglig}, \eqref{M_neglig_2}, that
\be\label{Y^P_is_main}
\sup_{\theta\in \Theta}|H_n(\theta)-Y^P_n(\theta)|=o_P(1\wedge h_n^{1/\alpha-1}), \quad n\to \infty.
\ee
On the other hand, we have the family of lower bounds \eqref{lower_3}.
Observe the following.
\begin{itemize}
    \item In the finite observation horizon case, denote
    $$
    \Lambda_{0,R}(\theta)=
    \frac1T
    \int_0^T\Big(a(\theta; X_{t})-a(\theta_0; X_{t})\Big)^2\chi(R^{-1}X_{t})\, dt
    $$
    and observe that, with probability 1,
    $$
    \Lambda_{n,R}(\theta)\to \Lambda_{0,R}(\theta), \quad n\to \infty
    $$
    uniformly in $\theta\in \Theta$; recall that $V(x)\equiv 1$ in this case. 
    \item In the infinite observation horizon case, we have uniform convergence in probability
    \be\label{Lambda}
    \sup_{\theta}|\Lambda_{n,R}(\theta)-\Lambda_{0,R}(\theta)|\to 0, \quad n\to \infty,
    \ee
    with (non-random)
    $$
    \Lambda_{0,R}(\theta)=\int_{\mathbb{R}}V(x)^2\Big(a(\theta; x)-a(\theta_0; x)\Big)^{\otimes 2}\chi(R^{-1}x)\, \pi(\theta_0, dx),
    $$
    see Appendix \ref{sB2}. 
\end{itemize}
We now complete the proof using the following standard argument. We have $H_n(\wh \theta_n)\leq 0$, hence
for every $\eps>0$, $\nu>0$, and $R>0$, we have by \eqref{lower_3} and \eqref{Y^P_is_main}
\begin{align}
\liminf_{n}&\,\P(|\tes-\tz|<\eps)
\ge \liminf_{n}\P\left(\inf_{\theta: |\theta-\tz|\ge\eps}H_{n}(\theta)>0\right)\nn
\\& \ge \liminf_{n}\P\left(\inf_{\theta: |\theta-\tz|\ge\eps}H_{n}(\theta)>\big(1\wedge h_n^{1/\alpha-1}\big)cc_R\nu/2\right) \nn\\
&\ge\liminf_{n} \P\left(\inf_{\theta: |\theta-\tz|\ge\eps}\Lambda_{n,R}(\theta)>\nu\right) \ge \P\left(\inf_{\theta: |\theta-\tz|\ge\eps}\Lambda_{0,R}(\theta)>\nu\right),
\nonumber
\end{align}
where the last inequality follows from the portmanteau theorem. We have with probability 1,
$$
\sup_{\theta}|\Lambda_{0,R}(\theta)-\Lambda_{0}(\theta)|\to 0, \quad R\to \infty
$$
uniformly in $\theta$, hence  taking $R\to \infty$ and using the global identifiability condition, we get
$$
\liminf_{n}\P(|\tes-\tz|<\eps)
\geq \P\left(\inf_{\theta: |\theta-\tz|\ge\eps}\Lambda_{0}(\theta)>\frac{\nu}{2}\right).
$$
The right-hand side converges to $1$ as $\nu\to 0+$ so that we can conclude
$$
\liminf_{n}\P(|\tes-\tz|<\eps) = 1, \quad \eps>0,
$$
completing the proof of \eqref{cons}.
\end{proof}

\subsection{Improved consistency for $\alpha<1$}

\begin{prp}\label{p42}  In the case $\alpha<1$, the improved following consistency rate holds:
\begin{equation}
\hat \theta_n-\theta_0=o_P(h_n^{1/\alpha-1}), \quad n\to \infty.
\label{cons1}
\end{equation}
\end{prp}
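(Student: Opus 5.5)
The plan is to rerun the contrast-function argument of Proposition \ref{p41} on the shrinking set $\{\theta:\ \eps h_n^{1/\alpha-1}\le|\theta-\theta_0|\}$. We already know $\hat\theta_n\in\{|\theta-\theta_0|<\eps_0\}$ with probability tending to one. Since $H_n(\hat\theta_n)\le 0$, it suffices to show that for every $\eps>0$
$$
\P\Big(\inf_{\theta:\,|\theta-\theta_0|\ge \eps h_n^{1/\alpha-1}} H_n(\theta)>0\Big)\to 1 .
$$
For $\alpha<1$ the quantity $h_n^{1/\alpha-1}\to0$ is exactly the scale at which $|\kappa_{k,n}(\theta)|\asymp h_n^{1-1/\alpha}|\theta-\theta_0|$ becomes of order one. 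Beyond that scale the lower bound $|v|\wedge v^2$ in \eqref{linear_quadratic} is linear, i.e.\ of order $|\kappa_{k,n}|$, and not quadratic.

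\emph{Lower bound for $Y^P_n$.} I start from \eqref{lower_1}, keeping the truncation $\chi(R^{-1}X_{t_{k-1,n}})$. Instead of comparing with $a(\theta)-a(\theta_0)$ globally, I linearize via \eqref{F1a}--\eqref{F5}:
$$
\kappa_{k,n}(\theta)=h_n^{1-1/\alpha}\Big(\nabla_\theta a(\theta_0;X_{t_{k-1,n}})\cdot(\theta-\theta_0)+O_R\big(|\theta-\theta_0|^{1+\gamma}+h_n^\gamma|\theta-\theta_0|\big)\Big)
$$
on $\{|X_{t_{k-1,n}}|\le 2R\}$. For $|\theta-\theta_0|\le\eps_0$ small, local identifiability (positive definiteness of $\Gamma_0$, transferred to its $R$-truncated empirical version by the same convergence used for $\Lambda_{n,R}$) gives the following. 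Writing $e=(\theta-\theta_0)/|\theta-\theta_0|$, the average over $k$ of $|\nabla_\theta a(\theta_0;X_{t_{k-1,n}})\cdot e|\,\chi(R^{-1}X_{t_{k-1,n}})$ stays bounded below by a positive random constant uniformly in $e$, with probability close to one. Using $|v|\wedge v^2\ge c_A|v|\,1_{|v|\ge A}$, and noting that $|v|\ge A$ holds when $|\theta-\theta_0|\ge \eps h_n^{1/\alpha-1}$ and $|\nabla a\cdot e|$ is not too small, I expect to reach
$$
Y^P_n(\theta)\ge c\,\frac{n}{r_n^2}\,h_n^{1-1/\alpha}|\theta-\theta_0| = c\,|\theta-\theta_0|\,h_n^{1/\alpha-1}
\qquad\text{for }\eps h_n^{1/\alpha-1}\le|\theta-\theta_0|\le\eps_0 .
$$
Here the constant $c$ depends on $\eps,R$ and the identifiability constant. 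In particular, this lower bound is bounded below by $c\eps h_n^{2/\alpha-2}$.

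\emph{Negligible terms.} By Lemma \ref{l41}, $|U_n^P(\theta)|\le o_P(r_n^{-1})|\theta-\theta_0|$, and by Lemma \ref{l42}, $|U^M_n|+|Y^M_n|\le O_P(r_n^{-1})|\theta-\theta_0|^\upsilon$. For the first, comparing with $h_n^{1/\alpha-1}|\theta-\theta_0|$ requires $r_n h_n^{1/\alpha-1}=\sqrt n\to\infty$, which holds. For the second, the requirement is $r_n^{-1}|\theta-\theta_0|^{\upsilon}\ll h_n^{1/\alpha-1}|\theta-\theta_0|$, i.e.\ $|\theta-\theta_0|^{1-\upsilon}\gg (\sqrt n)^{-1}$. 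This is the main obstacle: $\upsilon<1$ loses a power. At the lowest point $|\theta-\theta_0|=\eps h_n^{1/\alpha-1}$ we need $h_n^{(1/\alpha-1)(1-\upsilon)}\sqrt n\to\infty$. I would handle it by choosing $\upsilon$ close to $1$, using $\liminf T_n>0$ so that $h_n\gtrsim n^{-1}$. This makes $h_n^{(1/\alpha-1)(1-\upsilon)}\ge c\,n^{-(1/\alpha-1)(1-\upsilon)}$, which is beaten by $\sqrt n$ once $(1/\alpha-1)(1-\upsilon)<1/2$.

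For the remaining region $|\theta-\theta_0|\ge\eps_0$, the argument of Proposition \ref{p41} already gives positivity, since the bound \eqref{lower_3} with $1\wedge h_n^{1/\alpha-1}=h_n^{1/\alpha-1}$ dominates the $o_P(h_n^{1/\alpha-1})$ errors \eqref{Y^P_is_main}. Combining the two regions yields \eqref{cons1}. In the infinite-horizon case, the truncation by $\chi$ and Assumption \ref{A_moment} are handled exactly as before.
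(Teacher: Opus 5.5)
Your proposal is correct and follows the paper's proof essentially step by step. You restrict to $\eps h_n^{1/\alpha-1}\le|\theta-\theta_0|\le\tilde\eps$ via Proposition \ref{p41}, linearize $\kappa_{k,n}$ through \eqref{F5}, and use the linear branch of $|v|\wedge v^2$ to get $Y_n^P(\theta)\gtrsim h_n^{1/\alpha-1}|\theta-\theta_0|$. You then absorb $U_n^P$, $U_n^M$, $Y_n^M$ by Lemmas \ref{l41}--\ref{l42}, with $\upsilon<1$ chosen so that $(1/\alpha-1)(1-\upsilon)<1/2$ and using $h_n\gtrsim n^{-1}$.

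The only difference is in how you bound $|v|\wedge v^2$ from below. You threshold, $|v|\wedge v^2\ge c_A|v|1_{|v|\ge A}$, which requires a uniform-in-$e$ lower bound for the average of $|\nabla_\theta a\cdot e|\,V\chi$ restricted to the indices where it is not too small. That bound follows from $|x|1_{|x|\ge\tau}\ge|x|-\tau$, and you should keep the weight $V$ in that average. The paper instead bounds both branches by multiples of $|\theta-\theta_0|(\nabla_\theta a\cdot e)^2V^2$ and lands directly on the smallest eigenvalue $\gamma_{n,R}$ of $\Gamma_{n,R}$.
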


\begin{proof} We will mainly repeat the arguments from the proof of Proposition \ref{p41}, but now to obtain the lower bound for $Y_n^P(\theta)$, we will use a `local' bound instead of the `global' bound \eqref{kappa_low1} for $|\kappa_{k,n}(\theta)|$.

By the assumption \eqref{F5}, we have the following bound for $|X_{t_{k-1,n}}|\leq 2R$:
\be\label{kappa_low2}
|\kappa_{k,n}(\theta)|\geq h_n^{-1/\alpha+1}
\bigg( |\nabla_\theta a(\theta_0;X_{t_{k-1,n}})\cdot(\theta-\theta_0)|V(X_{t_{k-1,n}})^2 - C_R |\theta-\theta_0|\Big(|\theta-\theta_0|^\gamma+h_n^{\gamma}\Big)
\bigg).
\ee
  Since $|\nabla_\theta a(\theta_0;x)|$ is bounded for $|x|\leq R$, we have for   $|X_{t_{k-1,n}}|\leq 2R$,
$$
|\nabla_\theta a(\theta_0;X_{t_{k-1,n}})\cdot(\theta-\theta_0)|\geq c_R|\theta-\theta_0|\left(\nabla_\theta a(\theta_0;X_{t_{k-1,n}})\cdot{\theta-\theta_0\over |\theta-\theta_0|}\right)^2,
$$
which gives
$$
|\kappa_{k,n}(\theta)|\geq h_n^{-1/\alpha+1}|\theta-\theta_0|\left(c_R\left(\nabla_\theta a(\theta_0;X_{t_{k-1,n}})\cdot{\theta-\theta_0\over |\theta-\theta_0|}\right)^2 V(X_{t_{k-1,n}})^2 - C_R(|\theta-\theta_0|^\gamma+h_n^{\gamma})\right).
$$
Again by applying the Cauchy inequality $(a-b)^2\geq a^2/2-b^2$, we have for $|X_{t_{k-1,n}}|\leq 2R$,
\begin{align}
|\kappa_{k,n}(\theta)|^2&\geq {h_n^{-2/\alpha+2}\over 2}|\nabla_\theta a(\theta_0;X_{t_{k-1,n}})\cdot(\theta-\theta_0)|^2 V(X_{t_{k-1,n}})^2
\nn\\
&{}\qquad -C_Rh_n^{-2/\alpha+2}|\theta-\theta_0|^2\left(|\theta-\theta_0|^\gamma+h_n^{\gamma}\right)^2
\\&
\ge h_n^{-2/\alpha+2}|\theta-\theta_0|^2\bigg\{ {1\over 2}\left(\nabla_\theta a(\theta_0;X_{t_{k-1,n}})\cdot{\theta-\theta_0\over |\theta-\theta_0|}\right)^2 V(X_{t_{k-1,n}})^2
\nn\\
&{}\qquad -C_R(|\theta-\theta_0|^\gamma+h_n^{\gamma})^2\bigg\}.    
\label{Cauchy}
\end{align}
Since $|\kappa_{k,n}(\theta)|^2\geq 0,$ we can deduce from the latter bound that, for $|\theta-\theta_0|\geq \eps h_n^{1/\alpha-1}$,
\begin{align}
    |\kappa_{k,n}(\theta)|^2 &\geq \eps h_n^{-1/\alpha+1}|\theta-\theta_0|\bigg\{ {1\over 2}\left(\nabla_\theta a(\theta_0;X_{t_{k-1,n}})\cdot{\theta-\theta_0\over |\theta-\theta_0|}\right)^2 V(X_{t_{k-1,n}})^2
    \nn\\
    &{}\qquad -C_R(|\theta-\theta_0|^\gamma+h_n^{\gamma})^2\bigg\}.
\end{align}
Then, by \eqref{lower_1}, we get for arbitrary $\eps>0, \wt \eps>0$ and $\eps h_n^{1/\alpha-1}\leq |\theta-\theta_0|\leq \wt \eps$,
$$\ba
& h_n^{-1/\alpha+1}{Y_n^P(\theta) \over |\theta-\theta_0|}
\\&\geq {h_n^{-2/\alpha+2}\over r_n^2}\sumk \bigg\{\left({\eps\over 2}\wedge c_R\right)\left(\nabla_\theta a(\theta_0;X_{t_{k-1,n}})\cdot{\theta-\theta_0\over |\theta-\theta_0|}\right)^2 V(X_{t_{k-1,n}})^2\chi(R^{-1}X_{t_{k-1,n}})
\nn\\
&{}\qquad -C_R(\wt\eps^\gamma+h_n^{\gamma})\bigg\}.
\ea
$$
Recall that
$$
{h_n^{-2/\alpha+2}\over r_n^2}=\frac{1}{n}
$$
and denote
$$
\Gamma_{n,R}=\frac{1}{n}\sumk\left(\nabla_\theta a(\theta_0;X_{t_{k-1,n}})\right)^{\otimes 2}\chi(R^{-1}X_{t_{k-1,n}}) 
V(X_{t_{k-1,n}})^2, 
$$
$$
\gamma_{n,R}=\inf_{|\ell|=1}\Big(\Gamma_{n,R}\ell\cdot \ell\Big).
$$
Summarizing the above arguments, we get the following lower bound:
\be\label{lower_6}
h_n^{-1/\alpha+1}\inf_{\eps h_n^{1/\alpha-1}\leq |\theta-\theta_0|\leq \wt \eps}
{Y_n^P(\theta)\over |\theta-\theta_0|}
\geq
\left({\eps\over 2}\wedge c_R\right)\gamma_{n,R}-C_R(\wt\eps^\gamma+h_n^{\gamma}).
\ee

Next, by Lemma \ref{l41} we have
$$
h_n^{-1/\alpha+1}\sup_{\theta}{|U^P_n(\theta)|\over |\theta-\theta_0|}=o_P(h_n^{-1/\alpha+1}r_n^{-1})=o_P(n^{-1/2}).
$$
Similarly, by Lemma \ref{l42} we have for any $\upsilon\in(0,1)$,
\begin{align}
& h_n^{-1/\alpha+1}\sup_{|\theta-\theta_0|\geq \eps h_n^{1/\alpha-1}}{|U_n^M(\theta)+Y_n^M(\theta)|\over |\theta-\theta_0|}
\nn\\
&\quad \leq 
h_n^{-1/\alpha+1}h_n^{(-1/\alpha+1)(1-\upsilon)}\sup_{|\theta-\theta_0|\geq \eps h_n^{1/\alpha-1}}
\frac{|U_n^M(\theta)|+|Y_n^M(\theta)|}{|\theta-\theta_0|^\upsilon}
\nn\\
&\quad =o_P(h_n^{(-1/\alpha+1)(1-\upsilon)}n^{-1/2}).
\nn
\end{align}
We are assuming that $\liminf_{n\to \infty}T_n>0$, hence taking $\upsilon>1$ sufficiently large so that
$$
(-1/\alpha+1)(1-\upsilon) + \frac12 \ge 0 \quad \iff \quad 
\upsilon\ge 1-\frac{\al}{2(1-\al)},
$$
we get
\be\label{Delta_M_neglig}
 \psi_{n, \eps}:=h_n^{-1/\alpha+1}\sup_{|\theta-\theta_0|\geq \eps h_n^{1/\alpha-1}}{|H_n(\theta)-Y^P_n(\theta)|\over |\theta-\theta_0|}=o_P(1).
 \ee

 Now we can finalize the entire argument. By Proposition \ref{p41}, we have for arbitrary $\eps>0, \wt \eps>0$
$$\ba
\liminf_{n}\P(|\tes-\tz|<\eps h_n^{1/\alpha-1})&=\liminf_{n}\P\left(\inf_{\eps h_n^{1/\alpha-1}\leq |\theta-\theta_0|\leq \wt \eps}{H_{n}(\theta)\over |\theta-\theta_0|}>0\right).
\ea
$$
Recall the decompositions \eqref{decomp_H}, \eqref{decomp_U}, and \eqref{decomp_Y}.
By \eqref{Delta_M_neglig} and \eqref{lower_6},
\begin{align}
& \liminf_{n} \P\left(\inf_{\eps h_n^{1/\alpha-1}\leq |\theta-\theta_0|\leq \wt\eps}{H_{n}(\theta)\over |\theta-\theta_0|}>0\right)
\nn\\
&{}\qquad =\liminf_{n}\P\left(h_n^{-1/\alpha+1}\inf_{\eps h_n^{1/\alpha-1}\leq |\theta-\theta_0|\leq \wt\eps}{H_{n}(\theta)\over |\theta-\theta_0|}>0\right)\nn
\\&{}\qquad \geq \liminf_{n}\P\left(\left({\eps\over 2}\wedge c_R\right)\gamma_{n,R}-C_R(\wt\eps^\gamma+h_n^{\gamma})-\psi_{n,R}>0\right).
\end{align}
We have
$$
\gamma_{n,R}\to \gamma_{0,R}:=\inf_{|\ell|=1}\Big(\Gamma_{0,R}\ell\cdot \ell\Big), \quad n\to \infty
$$
in probability, where
$$
\Gamma_{0,R}=\int_0^T\Big(\nabla_\theta a(\theta_0; X_{t})\Big)^{\otimes 2}\chi(R^{-1}X_{t})\, dt
$$
in the finite observation horizon case (where $V(x)\equiv 1$) and
$$
\Gamma_{0,R}=\int_{\mathbb{R}}V(x)^2 \Big(\nabla_\theta a(\theta_0; x)\Big)^{\otimes 2}\chi(R^{-1}x)\, \pi(\theta_0, dx)
$$
in the infinite observation horizon case. Then,
$$
\liminf_{n}\P\left(\left({\eps\over 2}\wedge c_R\right)\gamma_{n,R}-C_R(\wt\eps^\gamma+h_n^{\gamma})-\psi_{n,R}>0\right)
\geq \P\left(\left({\eps\over 2}\wedge c_R\right)\gamma_{0,R}-C_R\wt\eps^\gamma>0\right),
$$
and we finally get
\begin{align}
\liminf_{n}\P(|\tes-\tz|<\eps h_n^{1/\alpha-1}) &\geq
\lim_{R\to \infty}\lim_{\wt \eps\to 0}\P\left(\left({\eps\over 2}\wedge c_R\right)\gamma_{0,R}-C_R\wt \eps^\gamma>0\right)
\nn\\
&=\lim_{R\to \infty}\P\left(\gamma_{0,R}>0\right)=1,
\nn
\end{align}
where we have used the local identifiability condition in the last identity.
\end{proof}

\subsection{Consistency at sub-optimal rate}\label{s45}

We are now ready to prove \eqref{cons_rate}.  The proof repeats those of Propositions \ref{p41} and \ref{p42} with just one (but substantial)  modification.
  We have just proved that
\be\label{prob1}
\P(\theta_n\in \Theta_n)\to 1
\ee
with $$
\Theta_n := \left\{\theta:\, |\theta-\theta_0|\leq 1\wedge h_{n}^{1/\alpha-1}\right\}.
$$
This additional knowledge now makes it possible to improve the lower bound for $Y_n^P(\theta)$. The key observation here is that, for $\theta\in \Theta_n$ and $|X_{t_{k-1,n}}|\leq 2R$, the term $\kappa_{k,n}(\theta)$ is bounded by $C_R$. Indeed, in this case, by \eqref{F1} one has
  $$
  |\kappa_{k,n}(\theta)|\leq h_n^{1-1/\alpha}|\theta-\theta_0|V(X_{t_{k-1,n}})W(X_{t_{k-1,n}})\leq \sup_{|x|\leq 2R}V(x)W(x).
  $$
  This means that instead of the lower estimate \eqref{linear_quadratic}, which combines a local quadratic bound with a global linear one, we can use just the quadratic bound:
  \be\label{quadratic}
  \int_{[-1,1]} q\left(z, v\right)\, dz\geq c_Q v^2, \qquad |v|\leq Q.
  \ee
  This will give, instead of \eqref{lower_1}, the following improved lower bound:
  \be\label{lower_7}
Y_{n,R}^P(\theta)\geq {c_R\over r_n^2}\sumk  \kappa_{k,n}(\theta)^2\chi(R^{-1}X_{t_{k-1,n}}), \quad \theta\in \Theta_n.
\ee
  For $ \kappa_{k,n}(\theta)^2$ we have the lower bound \eqref{Cauchy}. Since
  $$
  {h_n^{-2/\alpha+2}\over r_n^2}={1\over  n},
  $$
  we finally  get from \eqref{lower_7},
   \be\label{lower_8}
Y_{n,R}^P(\theta)\geq |\theta-\theta_0|^2\Big( c_R  \gamma_{n,R}-C_R(|\theta-\theta_0|^\gamma+h_n^{\gamma})^2\Big), \quad \theta\in \Theta_n.
\ee
Combined with \eqref{Delta_neglig}, \eqref{M_neglig_2}, the previous estimate gives for any $R>0$,
   \be\label{lower_9}
   \ba
H_{n}(\theta)\geq |\theta-\theta_0|^2\Big( c_R  \gamma_{n,R}
-C_R(|\theta-\theta_0|^\gamma+h_n^{\gamma})^2\Big)
- r_n^{-1} \eta_n  |\theta-\theta_0|^\upsilon, \quad \theta\in \Theta_n,
\ea
\ee
with $\eta_n$ bounded in probability. Since $H_n(\wh \theta_n)\leq 0$, \eqref{lower_9} yields that for each $\nu>0$, on the set
$$
\Omega_n^{R, \nu}=\big\{ \gamma_{n,R}\geq \nu, ~~\wh \theta_n\in \Theta_n\big\},
$$
the following bound holds:
\be\label{prob2}
|\wh\theta_n-\theta_0|\leq \eta_n^{R, \nu, \upsilon} r_n^{-1/(2-\upsilon)}
\ee
with $\eta_n^{R, \nu, \upsilon}$ bounded in probability.
By \eqref{prob1},
$$
\liminf_{n\to \infty}\P(\Omega_n^{R, \nu})\geq \liminf_{n\to \infty}\P( \gamma_{n,R}>\nu)\geq \P( \gamma_{0,R}>\nu).
$$
 Under the local identifiability assumption, for any $\eps>0$ we can fix $R$ large enough and $\nu>0$ small enough so that
 $$\P( \gamma_{0,R}>\nu)\geq 1-\eps.
 $$
Therefore, the inequality \eqref{prob2} holds with probability $\geq 1-2\eps$ for $n$ large enough. Since $\eps>0$ here is arbitrary, this shows that
$$
|\wh\theta_n-\theta_0|=O_P( r_n^{-1/(2-\upsilon)})
$$
for arbitrary $\upsilon <1$. Taking $\upsilon>2-1/\varsigma,$ we complete the proof of \eqref{cons_rate}.

\section{Proofs, II: the main statements}
\label{sec_proof.II}

In the previous section, we have proved that, for any $\varsigma <1$,
$$
\P(\widehat \theta_n\in \Theta_{n, \varsigma})\to 1,
$$
where we denote
$$
\Theta_{n, \varsigma}=\left\{\theta:\, |\theta-\theta_0|\leq  r_n^{-\varsigma}\right\}.
$$
In what follows, we would like to consider the `true' rate $r_n^{-1}$; hence, it is instructive to identify the key difficulty which does not allow us to reach the `ideal' value $\varsigma=1$ using the previously developed technique. The limitation  $\varsigma <1$ comes from the estimate \eqref{lower_9}, where the last (sub-linear) term contains a power function with $\upsilon<1$. We can not get a similar estimate with $\upsilon=1$ (which would lead to consistency at the rate $r_n^{-1}$), because the \emph{uniform} bound for the martingale terms $U^M_n(\theta), Y_n^M(\theta)$, given by Lemma \ref{l42}, only provides the power $\upsilon<1$. Note that the \emph{point-wise} bound from the same lemma has the required power $\upsilon=1$. Thus, the technique, based on the Kolmogorov-Chentsov theorem, appears to be not precise enough to provide the exact consistency rate. This observation gives an insight into the subsequent constructions and considerations: we will use additional considerations to avoid the loss of accuracy caused by the Kolmogorov-Chentsov theorem. 
Put simply, we will use a new contrast function, convex in $\theta$; in this framework, point-wise bounds (convergence) will imply uniform-in-$\theta$ bounds (convergence).

\subsection{Linearization}

Denote
\begin{align}
\bar F_h(\theta;x)&=F_h(\theta_0; x)+h \nabla_\theta a(\theta_0;x)\cdot (\theta-\theta_0),
\nn\\
\bar \kappa_{k,n}(\theta)&=h_{n}^{-1/\alpha}\left(\bar F_{h_{n}}(\theta; X_{t_{k-1,n}})-\bar F_{h_{n}}(\theta_0; X_{t_{k-1,n}})\right)V(X_{t_{k,n}})
\\&=h_{n}^{1-1/\alpha} V(X_{t_{k,n}}) \nabla_\theta a(\theta_0; X_{t_{k-1,n}})\cdot (\theta-\theta_0),
\nn
\end{align}
and put
$$
\bar H_n(\theta)={1\over r_n^2}h_{n}^{-1/\alpha}\sum_{k=1}^n\left(\Big|X_{t_{k,n}}-\bar F_{h_{n}}(\theta; X_{t_{k-1,n}} )\Big|-\Big|X_{t_{k,n}}-\bar F_{h_{n}}(\theta_0; X_{t_{k-1,n}})\Big|\right)V(X_{t_{k,n}}).
$$
Because $F_h(\theta_0; x)=\bar F_h(\theta_0; x)$, we have
$$
\bar H_n(\theta)={1\over r_n^2} \sum_{k=1}^n\left( \left|\zeta_{k,n}-\bar\kappa_{k,n}(\theta)\right|-\left|\zeta_{k,n}\right|\right).
$$
The functions $\bar\kappa_{k,n}(\theta)$ are linear, and thus the new contrast function $\bar H_n(\theta)$ is convex  in $\theta$. We will show that the difference $H_n(\theta)-\bar H_n(\theta)$ is negligible, in a sense. For this purpose, we will compare the parts in the decompositions \eqref{decomp_H} of $H_n(\theta)$ with the similar parts for $\bar H_n(\theta),$ which we denote $\bar U_n(\theta)$ and $\bar Y_n(\theta),$  respectively. Similar notational convention will be used for the decompositions of $\bar U_n(\theta)$ and $\bar Y_n(\theta),$ analogous to \eqref{decomp_U} and \eqref{decomp_Y}.

Recall that $\gamma>0$ is the constant given by Assumption \ref{Ass2}.

\begin{lem}\label{l51} For any $\varsigma\in (0,1)$, we have
\be\label{Delta_neglig_lin}
\sup_{\theta\not=\theta_0, \theta\in \Theta_{n, \varsigma} }{|U_n^P(\theta)-\bar U_n^P(\theta)|\over |\theta-\theta_0|}=o_P(r_n^{-1}h_n^{\gamma \varsigma}), \quad n\to \infty.
\ee
\end{lem}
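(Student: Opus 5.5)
Following the proof of Lemma \ref{l41}, I will write the difference of the predictable parts as a product of an $\Ff_{k-1,n}$-measurable factor and a conditional expectation of the sign. Since $\bar H_n$ is built from the same $\zeta_{k,n}$ as $H_n$ (the weight in $\bar\kappa_{k,n}$ should be read as $V(X_{t_{k-1,n}})$, matching $\kappa_{k,n}$, so that $\bar\kappa_{k,n}$ is predictable), we have
$$
u_{k,n}^P(\theta)-\bar u_{k,n}^P(\theta)=-\big(\kappa_{k,n}(\theta)-\bar\kappa_{k,n}(\theta)\big)\,\E[\sgn(\zeta_{k,n})|\Ff_{k-1,n}].
$$
The second factor was already bounded in the proof of Lemma \ref{l41}. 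There, the decomposition \eqref{decomp2}, the bound \eqref{Residue_1} for the residual, and \eqref{F0} with $\mathfrak f_t$ in place of $f_t$ (Remark \ref{remAss2}) give
$$
\big|\E[\sgn(\zeta_{k,n})|\Ff_{k-1,n}]\big|\le Ch_n^{\delta}W(X_{t_{k-1,n}}).
$$

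The new ingredient is a sharper bound on the first factor, which uses \eqref{F5} instead of \eqref{F1}. By definition,
$$
\kappa_{k,n}(\theta)-\bar\kappa_{k,n}(\theta)=h_n^{-1/\alpha}\Big(F_{h_n}(\theta;x)-F_{h_n}(\theta_0;x)-h_n\nabla_\theta a(\theta_0;x)\cdot(\theta-\theta_0)\Big)V(x),\qquad x=X_{t_{k-1,n}}.
$$
By \eqref{F5} this is at most
$$
h_n^{1-1/\alpha}|\theta-\theta_0|\big(|\theta-\theta_0|^\gamma+h_n^\gamma\big)V(x)W(x).
$$
On $\Theta_{n,\varsigma}$ we have $|\theta-\theta_0|^\gamma\le r_n^{-\gamma\varsigma}$. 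Next, $r_n^{-1}=n^{-1/2}h_n^{1/\alpha-1}$, and \eqref{sampling.design} gives $n^{-1/2}\le Ch_n^{1/2}$. Hence $r_n^{-1}\le Ch_n^{1/2+1/\alpha-1}$, which is at most a constant times $h_n$ when $\alpha\le 2$ and $h_n\le 1$. In particular $r_n^{-\gamma\varsigma}\le Ch_n^{\gamma\varsigma}$. Together with $h_n^\gamma\le h_n^{\gamma\varsigma}$, this yields
$$
|\kappa_{k,n}(\theta)-\bar\kappa_{k,n}(\theta)|\le Ch_n^{1-1/\alpha+\gamma\varsigma}|\theta-\theta_0|\,V(X_{t_{k-1,n}})W(X_{t_{k-1,n}})
$$
uniformly over $\theta\in\Theta_{n,\varsigma}$.

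Combining the two factors and summing over $k$ reproduces the final computation of Lemma \ref{l41} with an extra factor $h_n^{\gamma\varsigma}$:
$$
\sup_{\theta\in\Theta_{n,\varsigma},\,\theta\ne\theta_0}\frac{|U_n^P(\theta)-\bar U_n^P(\theta)|}{|\theta-\theta_0|}\le\frac{Cn^{1/2}h_n^{\delta}h_n^{\gamma\varsigma}}{r_n}\cdot\frac1n\sum_{k=1}^nV(X_{t_{k-1,n}})W(X_{t_{k-1,n}})^2.
$$
The average is $O_P(1)$, by a.s. boundedness of the paths in the finite-horizon case and by Assumption \ref{A_moment} in the infinite-horizon case. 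Assumption \ref{AssStab} gives $n^{1/2}h_n^\delta\to0$, so the whole expression is $o_P(r_n^{-1}h_n^{\gamma\varsigma})$, which is \eqref{Delta_neglig_lin}.

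The only step that needs care is the rate comparison $r_n^{-\gamma\varsigma}\lesssim h_n^{\gamma\varsigma}$, which depends on \eqref{sampling.design} and on $\alpha<2$. If that comparison were insufficient, I would instead state the bound with $r_n^{-\gamma\varsigma}+h_n^\gamma$. The sign estimate itself is taken over unchanged from Lemma \ref{l41}.
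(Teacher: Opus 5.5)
Your route is the paper's: write $u^P_{k,n}-\bar u^P_{k,n}=-(\kappa_{k,n}-\bar\kappa_{k,n})\,\E[\sgn(\zeta_{k,n})|\Ff_{k-1,n}]$, bound the first factor with \eqref{F5} instead of \eqref{F1}, and redo the summation of Lemma \ref{l41}. Reading the weight in $\bar\kappa_{k,n}$ as $V(X_{t_{k-1,n}})$ is also correct.

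The gap is the rate comparison. From $nh_n\ge c$ you correctly get $r_n^{-1}\le Ch_n^{1/\alpha-1/2}$. However, $h_n^{1/\alpha-1/2}\le Ch_n$ needs $1/\alpha-1/2\ge 1$, i.e.\ $\alpha\le 2/3$, not $\alpha\le 2$. For example, with $\alpha=1$ and $T_n\equiv T$ you have $r_n^{-\gamma\varsigma}=n^{-\gamma\varsigma/2}$, whereas $h_n^{\gamma\varsigma}\asymp n^{-\gamma\varsigma}$. In fact, for every $\alpha\ge 1$ and every design allowed by Assumption \ref{AssStab},
\[
r_nh_n=\sqrt{nh_n^{2\delta}}\;h_n^{2-1/\alpha-\delta}\to 0,
\]
because $\delta<\delta_{drift}\le 1$. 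Hence $r_n^{-1}/h_n\to\infty$, and $|\theta-\theta_0|^\gamma\le Ch_n^{\gamma\varsigma}$ fails near the boundary of $\Theta_{n,\varsigma}$. What your argument actually proves is
\[
\sup_{\theta\not=\theta_0,\ \theta\in\Theta_{n,\varsigma}}\frac{|U_n^P(\theta)-\bar U_n^P(\theta)|}{|\theta-\theta_0|}=o_P\Big(r_n^{-1}\big(r_n^{-\gamma\varsigma}+h_n^{\gamma}\big)\Big).
\]
This is weaker than the stated rate whenever $r_n^{-1}\gg h_n$.

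The paper's proof makes the same replacement $|\theta-\theta_0|^\gamma\le h_n^{\gamma\varsigma}$, and its justification does not hold either. The middle term of its displayed chain, $\sqrt{nh_n^{2\delta}}\,h_n^{2-1/\alpha-\delta}$, equals $r_nh_n$, and $r_n^{-\gamma\varsigma}\le r_nh_n$ fails in the example above whenever $\gamma\varsigma<1$.

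So your fallback is necessary, not optional, and with it you prove a modified lemma rather than the one stated. That modification is enough for what follows. Carry $\epsilon_n:=r_n^{-\gamma\varsigma}+h_n^{\gamma}$ in place of $h_n^{\gamma\varsigma}$ through \eqref{kappa1} and Lemmas \ref{l52}, \ref{l53}. The proof of Proposition \ref{p51} still goes through, because $r_n^{-\gamma\varsigma}$ times $r_n^{1-\varsigma}$, $r_n^{1-\upsilon\varsigma}$ or $r_n^{2-2\varsigma}$ tends to $0$ once $\varsigma>2/(2+\gamma)$ and $\varsigma(\upsilon+\gamma)>1$. The $h_n^{\gamma}$ part is handled exactly as in the paper.
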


\begin{proof}
  We have
  $$
  \Delta_n(\theta)-\bar \Delta_n(\theta)={1\over r_n^2}\sumk (u_{k,n}^P(\theta)-\bar u^P_{k,n}(\theta))
$$
with
$$
\bar u^P_{k,n}(\theta)=-\bar \kappa_{k,n}(\theta)\E[\sgn (\zeta_{k,n})|\Ff_{k-1,n}].
$$
By \eqref{F5},
$$
|\kappa_{k,n}(\theta)-\bar \kappa_{k,n}(\theta)| \leq C h_n^{-1/\alpha+1}|\theta-\theta_0|\Big(|\theta-\theta_0|^\gamma+h_n^\gamma\Big)V(X_{t_{k-1,n}})W(X_{t_{k-1,n}}).
$$
Since $\delta<\delta_{regr}=1+\eta-1/\alpha$ (Assumption \ref{AssStab}), we have $(0\le)\,1-\eta<2-1/\alpha-\delta$ and hence for $\theta\in \Theta_{n, \varsigma}$,
$$
|\theta-\theta_0|^\gamma 
\leq \sqrt{nh_n^{2\delta}}\, h_n^{2-1/\alpha-\delta}
\leq h_n^{\gamma\varsigma}
$$
for $n$ large enough. 
Since $h_n^{\gamma\varsigma} \gg h_n^\gamma, $ this gives the bound
\be\label{kappa1}
|\kappa_{k,n}(\theta)-\bar \kappa_{k,n}(\theta)|\leq C h_n^{-1/\alpha+1+\gamma\varsigma}|\theta-\theta_0|V(X_{t_{k-1,n}})W(X_{t_{k-1,n}}).
\ee
Repeating the proof of Lemma literally \ref{l41} with the estimate \eqref{kappa0} replaced by \eqref{kappa1}, we get the required statement.
\end{proof}

Similarly, using \eqref{kappa1} instead of \eqref{kappa0}, we get the following analogue of the second statement in Lemma \ref{l42}.

\begin{lem}\label{l52} For any $\upsilon\in (0,1)$ and $\varsigma\in (0,1)$,
we have
\be\label{M_neglig_lin}
\sup_{\theta\not=\theta_0, \theta\in \Theta_{n, \varsigma} }{|U^M_n(\theta)-\bar U^M_n(\theta)|+|Y^M_n(\theta)-\bar Y^M_n(\theta)|\over |\theta-\theta_0|^\upsilon}=o_P(r_n^{-1}h_n^{\gamma \varsigma}), \quad n\to \infty.
\ee
\end{lem}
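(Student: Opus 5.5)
We follow the proof of the second statement of Lemma \ref{l42}: a Burkholder--Davis--Gundy moment bound on the localized martingale differences, then the Kolmogorov--Chentsov criterion. The one change is that the increments are now controlled through \eqref{kappa1} rather than \eqref{kappa0}. Since $u_{k,n}^M(\theta_0)=\bar u_{k,n}^M(\theta_0)=0$, and the same holds for the $y$-terms, it suffices to control the H\"older modulus of the differences
$$
D_n(\theta):=U^M_n(\theta)-\bar U^M_n(\theta)
$$
and the analogous $Y$-difference on the ball $\Theta_{n,\varsigma}$. We treat the $U$-part; the $Y$-part is identical because $|q(x,v)-q(x,v')|\le 2|v-v'|$.

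The first step is to bound the martingale increments pointwise in $(\theta,\theta')$. We have $u_{k,n}(\theta)-\bar u_{k,n}(\theta)=-(\kappa_{k,n}(\theta)-\bar\kappa_{k,n}(\theta))\sgn\zeta_{k,n}$. Hence, for $\theta,\theta'\in\Theta_{n,\varsigma}$, the square of the martingale-part difference
$$
\big(u^M_{k,n}(\theta)-\bar u^M_{k,n}(\theta)\big)-\big(u^M_{k,n}(\theta')-\bar u^M_{k,n}(\theta')\big)
$$
is bounded by $4|g_{k,n}(\theta)-g_{k,n}(\theta')|^2$, where $g_{k,n}:=\kappa_{k,n}-\bar\kappa_{k,n}$ is $\Ff_{k-1,n}$-measurable. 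The needed estimate is a Lipschitz bound in $\theta$:
$$
|g_{k,n}(\theta)-g_{k,n}(\theta')|\le Ch_n^{1-1/\alpha+\gamma\varsigma}|\theta-\theta'|\,V(X_{t_{k-1,n}})W(X_{t_{k-1,n}}).
$$
We obtain it from the mean value theorem together with \eqref{F1a} and a H\"older bound on $\nabla_\theta F$. More precisely, we use $\nabla_\theta g = h_n^{-1/\alpha}(\nabla_\theta F_{h_n}(\theta;\cdot)-h_n\nabla_\theta a(\theta_0;\cdot))V$. Then \eqref{F1a} controls the difference $\nabla_\theta F_{h_n}(\theta;\cdot)-h_n\nabla_\theta a(\theta;\cdot)$ by $h_n^{1+\gamma}W$. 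The remaining difference $h_n(\nabla_\theta a(\theta)-\nabla_\theta a(\theta_0))$ is of size $h_n|\theta-\theta_0|^\gamma W$, which on $\Theta_{n,\varsigma}$ is at most $h_n^{1+\gamma\varsigma}W$ by the same computation as in Lemma \ref{l51}.

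\textbf{Main obstacle.} This Lipschitz bound on $g_{k,n}$ is the step I expect to be hardest. The H\"older continuity of $\nabla_\theta a$ in $\theta$ is not literally stated; in the Euler setting it is \eqref{ass_Eu}. If it is not available, I would fall back on \eqref{kappa1} alone, which bounds $|g_{k,n}(\theta)|$ but not its increments. In that case I would run Kolmogorov--Chentsov after a rescaling $\theta=\theta_0+r_n^{-\varsigma}s$ with $|s|\le1$, interpolating the two available bounds: the crude Lipschitz bound \eqref{kappa0} for increments, and \eqref{kappa1} for sizes. This costs only an arbitrarily small power, absorbed by choosing the exponent in $h_n^{\gamma\varsigma}$ slightly smaller, which is permitted since $\varsigma<1$ is arbitrary.

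Given the increment bound, we argue as in \eqref{Moment_for_Kolmogorov}. We localize with the stopping time $\tau_{n,R}$ and apply the Burkholder--Davis--Gundy inequality to get
$$
\E\big[|D_n(\theta)-D_n(\theta')|^p1_{\{\tau_{n,R}>n\}}\big]\le C_pR^p\,r_n^{-p}h_n^{p\gamma\varsigma}|\theta-\theta'|^p .
$$
Choosing $p$ with $p-m>\upsilon p$, the Kolmogorov--Chentsov theorem then gives, on $\{\tau_{n,R}>n\}$, an $L^p$ bound of order $R\,r_n^{-1}h_n^{\gamma\varsigma}$ for the $\upsilon$-H\"older seminorm of $D_n$ over $\Theta_{n,\varsigma}$. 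Evaluating at $\theta'=\theta_0$ gives the claim with $O_P$. To upgrade $O_P$ to $o_P$, we apply the argument with $\varsigma$ replaced by some $\varsigma'\in(\varsigma,1)$, using $\Theta_{n,\varsigma}\subset\Theta_{n,\varsigma'}$ and $h_n^{\gamma\varsigma'}=o(h_n^{\gamma\varsigma})$. Finally, we let $R\to\infty$ using $\liminf_n\P(\tau_{n,R}>n)\to1$, as in Lemma \ref{l42}.
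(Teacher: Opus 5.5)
Your plan matches the paper's proof, which is only the remark that one reruns the Burkholder--Davis--Gundy/Kolmogorov--Chentsov argument of Lemma \ref{l42} with \eqref{kappa1} in place of \eqref{kappa0}. You are right that this cannot be done literally: \eqref{kappa1} bounds $g_{k,n}(\theta)=\kappa_{k,n}(\theta)-\bar\kappa_{k,n}(\theta)$ only relative to $\theta_0$, while Kolmogorov--Chentsov needs increments in $(\theta,\theta')$.

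Your primary repair does not close this gap, for two reasons.
\begin{enumerate}
\item H\"older continuity of $\nabla_\theta a$ in $\theta$ does not follow from Assumptions \ref{Ass1}--\ref{Ass2}. Together, \eqref{F3} and \eqref{F5} give only the Taylor bound $|a(\theta;x)-a(\theta_0;x)-\nabla_\theta a(\theta_0;x)\cdot(\theta-\theta_0)|\le|\theta-\theta_0|^{1+\gamma}W(x)$. That bound allows $\nabla_\theta a(\cdot;x)$ to be discontinuous at $\theta_0$: take the Euler regressor and $a(\theta;x)=b(x)\phi(\theta)$ with $\phi(\theta)=\theta+(\theta-\theta_0)^3\sin((\theta-\theta_0)^{-2})$. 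Then $g_{k,n}$ has no Lipschitz constant better than $h_n^{1-1/\alpha}$.
\item Even granting that assumption, the $Y$-part is not ``identical''. Lipschitz continuity of $q$ in $v$ does not bound the mixed difference $[q(\zeta_{k,n},\kappa_{k,n}(\theta))-q(\zeta_{k,n},\bar\kappa_{k,n}(\theta))]-[q(\zeta_{k,n},\kappa_{k,n}(\theta'))-q(\zeta_{k,n},\bar\kappa_{k,n}(\theta'))]$ by $|g_{k,n}(\theta)-g_{k,n}(\theta')|$. For instance, if $0<\bar\kappa_{k,n}<\zeta_{k,n}<\kappa_{k,n}$ at both $\theta$ and $\theta'$, the mixed difference equals $2(\kappa_{k,n}(\theta)-\kappa_{k,n}(\theta'))$. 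Pathwise, only the minimum of the crude Lipschitz bound and the size bound is available.
\end{enumerate}

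Your fallback starts from exactly that minimum, $|g_{k,n}(\theta)-g_{k,n}(\theta')|\lesssim h_n^{1-1/\alpha}\min\{|\theta-\theta'|,\,h_n^{\gamma\varsigma}(|\theta-\theta_0|\vee|\theta'-\theta_0|)\}VW$ (and likewise for $Y$). The step that fails is the claim that a single Kolmogorov--Chentsov application on the rescaled ball of radius $r_n^{-\varsigma}$ costs only a small power.

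\begin{itemize}
\item To get a $\upsilon$-H\"older seminorm you need a moment bound $|s-s'|^{\lambda p}$ with $\lambda p>m+\upsilon p$.
\item So in $\min\{|s-s'|,h_n^{\gamma\varsigma}\}\le|s-s'|^{\lambda}h_n^{\gamma\varsigma(1-\lambda)}$ you are forced to take $\lambda>\upsilon$.
\item The result is $O_P(r_n^{-1}r_n^{-\varsigma(1-\upsilon)}h_n^{\gamma\varsigma(1-\lambda)})$. This is $o_P(r_n^{-1}h_n^{\gamma\varsigma})$ only if $r_n^{1-\upsilon}h_n^{\gamma\lambda}\to\infty$.
\item For $T_n\to T$ we have $r_n\asymp h_n^{1/2-1/\alpha}$, so this requires $(1/\alpha-1/2)(1-\upsilon)>\gamma\upsilon$. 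That is false for $\upsilon$ near $1$, which is the regime Proposition \ref{p51} uses.
\end{itemize}

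The missing idea is peeling. Set $\rho_j=2^{-j}r_n^{-\varsigma}$ and $B_j=\{|\theta-\theta_0|\le\rho_j\}$; after rescaling $\theta=\theta_0+\rho_j s$, the bound on $B_j$ reads $\rho_j\min\{|s-s'|,2h_n^{\gamma\varsigma}\}$. Then:
\begin{itemize}
\item Use Kolmogorov--Chentsov on each $B_j$ only to bound $\sup_{B_j}|D_n|$. This needs only $\lambda>m/p$, so $\lambda$ can be made arbitrarily small by taking $p$ large.
\item On $B_j\setminus B_{j+1}$, divide by $\rho_{j+1}^{\upsilon}$, giving an $L^p$ bound $R\,r_n^{-1}h_n^{\gamma\varsigma(1-\lambda)}\rho_j^{1-\upsilon}$ on $\{\tau_{n,R}>n\}$.
\item Sum the geometric series in $\rho_j^{(1-\upsilon)p}$. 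This gives $O_P(r_n^{-1-\varsigma(1-\upsilon)}h_n^{\gamma\varsigma(1-\lambda)})$, which is $o_P(r_n^{-1}h_n^{\gamma\varsigma})$ once $\gamma\lambda<(1/\alpha-1/2)(1-\upsilon)$, since $r_n\gtrsim h_n^{-(1/\alpha-1/2)}$.
\end{itemize}

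Peeling also supplies the $o_P$ directly. Your upgrade via $\varsigma'\in(\varsigma,1)$ does not work: it relies on $\Theta_{n,\varsigma}\subset\Theta_{n,\varsigma'}$, but since $r_n\to\infty$ the inclusion is the reverse.
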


Finally, we have the following.

\begin{lem}\label{l53} For any  $\varsigma\in (0,1)$,
\be\label{Y_neglig_lin}
\sup_{\theta\not=\theta_0, \theta\in \Theta_{n, \varsigma} }{|Y_n^P(\theta)-\bar Y_n^P(\theta)|\over |\theta-\theta_0|^2}=O_P(h_n^{\gamma \varsigma}), \quad n\to \infty.
\ee
\end{lem}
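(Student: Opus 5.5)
The plan is to compare the predictable parts term by term, using only the Lipschitz property of $q$ in its second argument. Recall that
$$
Y_n^P(\theta)-\bar Y_n^P(\theta)={1\over r_n^2}\sumk \E\Big[q(\zeta_{k,n},\kappa_{k,n}(\theta))-q(\zeta_{k,n},\bar\kappa_{k,n}(\theta))\,\Big|\,\Ff_{k-1,n}\Big].
$$
A crude bound $|q(x,v)-q(x,v')|\le 2|v-v'|$ would lose too much. It gives only a factor $|\theta-\theta_0|$ times $h_n^{1-1/\alpha}$, whereas we need $|\theta-\theta_0|^2$. I would therefore exploit the fact that $q(\cdot,v)$ is supported on the interval between $0$ and $v$, so its conditional expectation picks up an extra small factor.

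First, I would establish the elementary estimate
$$
\int_{\Re}|q(z,v)-q(z,v')|\,g(z)\,dz\le C\|g\|_\infty |v-v'|\,(|v|+|v'|),
$$
valid for a bounded density $g$. This holds because $q(\cdot,v)-q(\cdot,v')$ is bounded by $2|v-v'|$ and vanishes outside an interval of length at most $|v|+|v'|$. I would apply it with $g$ equal to the density of $\zeta_{k,n}$ given $\Ff_{k-1,n}$. Via the decomposition \eqref{decomp} with $x=X_{t_{k-1,n}}$, this density is $h_n^{1/\alpha}V(x)^{-1}\,p_{h_n}\big(x,\,F_{h_n}(\theta_0;x)+h_n^{1/\alpha}z/V(x)\big)$. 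The principal part $\wt p$ contributes a bounded density in $z$, since $\sigma_t$ is separated from zero and $\phi_\alpha$ is bounded; the scaling in $V$ only produces a factor $V(x)^{-1}$, absorbed because the $v$'s carry a factor $V$. For the residual $r_{h_n}$, I would use the sup-bound \eqref{Residue_2}, which gives $|r_{h_n}|\le Ch_n^{-1/\alpha+\delta'}$, so in the $z$ variable it is bounded as well. If \eqref{Residue_2} holds only locally in $x$, I would localize with $\chi(R^{-1}X_{t_{k-1,n}})$ as in Proposition \ref{p41}. Alternatively, I would use the $L^1$ bound \eqref{Residue_1} together with $|q|\le 2|v|$, which produces an $h_n^\delta$ term that must still be checked against $|\theta-\theta_0|^2$; this is the delicate point.

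Second, I would insert the bounds on the $\kappa$'s. From \eqref{kappa1}, $|\kappa_{k,n}-\bar\kappa_{k,n}|\le Ch_n^{1-1/\alpha+\gamma\varsigma}|\theta-\theta_0|VW$. From \eqref{kappa0} and the analogous bound for $\bar\kappa$ (which uses \eqref{F1a}), $|\kappa_{k,n}|+|\bar\kappa_{k,n}|\le Ch_n^{1-1/\alpha}|\theta-\theta_0|VW$. Multiplying, each summand is bounded by $Ch_n^{2-2/\alpha+\gamma\varsigma}|\theta-\theta_0|^2V^2W^2$. Since $h_n^{2-2/\alpha}/r_n^2=1/n$, the sum becomes
$$
h_n^{\gamma\varsigma}|\theta-\theta_0|^2\cdot\frac1n\sumk V^2W^2(X_{t_{k-1,n}})=O_P(h_n^{\gamma\varsigma})|\theta-\theta_0|^2,
$$
uniformly in $\theta\in\Theta_{n,\varsigma}$. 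For the $O_P(1)$ average, I would rely on a.s. boundedness of paths in the finite-horizon case, and on Assumption \ref{A_moment} with $V$ bounded in the infinite-horizon case.

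I expect the main obstacle to be the residual part of the transition density. Away from a compact set in $x$, only the integrated bound \eqref{Residue_1} may be available. In that case I would split on $\{|X_{t_{k-1,n}}|\le R\}$, where the pointwise bound applies, and its complement, which is handled by the fact that the probability of exceeding $R$ along the whole path tends to zero as $R\to\infty$, uniformly in $n$. I would note that the mismatch of $V(X_{t_{k,n}})$ versus $V(X_{t_{k-1,n}})$ in the definition of $\bar\kappa$ should be treated as $V(X_{t_{k-1,n}})$, consistent with the predictability used throughout.
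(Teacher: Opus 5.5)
Your main line is correct and matches the paper's proof. You bound the rescaled conditional density via $\wt p$ and the residual estimate \eqref{Residue_2} of Theorem \ref{tA2}, apply $\int_{\Re}|q(z,v)-q(z,v')|\,dz\le C(|v|+|v'|)|v-v'|$, then use \eqref{kappa0}, \eqref{kappa1} and $h_n^{2-2/\alpha}/r_n^2=1/n$; your handling of the factor $V$ inside $\zeta_{k,n}$ is, if anything, more careful than the paper's. Since \eqref{Residue_2} holds with $\sup_{x,y}$, your fallbacks are unnecessary, and they would not have worked: the $L^1$ route via \eqref{Residue_1} gives a term only linear in $|\theta-\theta_0|$, which blows up after dividing by $|\theta-\theta_0|^2$, and path-localization on $\{|X_t|\le R\}$ is not uniform in $n$ when $T_n\to\infty$.
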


\begin{proof} Denote
$$
\psi_{t}(x,z)=t^{1/\alpha}p_t\Big(x,F_t(\theta_0;x)+zt^{1/\alpha}\Big).
$$
Then, we can write
$$\ba
y_{n,k}^P(\theta)&=
\int_{\Re}q\left({y-F_{h_n}(\theta_0;X_{t_{k-1,n}})\over h^{1/\alpha}_n},\kappa_{k,n}(\theta)\right)p_{h_n}(X_{t_{k-1,n}},y)\, dy
\\&=\int_{\Re}q\left(z,\kappa_{k,n}(\theta)\right)\psi_{h_n}(X_{t_{k-1,n}},z)\, dz
\ea$$
and similarly
$$
\bar y_{n,k}^P(\theta)=\int_{\Re}q\left(z,\bar \kappa_{k,n}(\theta)\right)\psi_{h_n}(X_{t_{k-1,n}},z)\, dz.
$$
By Theorem \ref{tA2} we have $\psi_{t}(x,z)$ bounded uniformly in $x,z\in \mathbb{R}$ and $t\in (0,1]$, hence
$$
|y_{n,k}(\theta)-\bar y_{n,k}(\theta)|\leq C\int_{\Re}\left|q\left(z, \kappa_{k,n}(\theta)\right)-q\left(z,\bar \kappa_{k,n}(\theta)\right)\right|\, dz.
$$
On  the other hand, it is easy to verify using the formula \eqref{q} that
$$
\int_{\Re}\left|q\left(z, v)\right)-q\left(z,\bar v\right)\right|\, dz\leq C(|v|+|\bar v|)|v-\bar v|.
$$
We have
$$
|\kappa_{k,n}(\theta)|+|\bar\kappa_{k,n}(\theta)|\leq  C h_n^{-1/\alpha+1}|\theta-\theta_0|V(X_{t_{k-1,n}})W(X_{t_{k-1,n}}),
$$
$$
|\kappa_{k,n}(\theta)-\bar\kappa_{k,n}(\theta)|\leq  C h_n^{-1/\alpha+1+\gamma\varsigma}|\theta-\theta_0|V(X_{t_{k-1,n}})W(X_{t_{k-1,n}})
$$
by  \eqref{kappa0} and \eqref{kappa1}, respectively. Therefore,
$$
|y_{n,k}^P(\theta)-\bar y_{n,k}^P(\theta)|\leq C h_n^{-2/\alpha+2+\gamma\varsigma}|\theta-\theta_0|^2V(X_{t_{k-1,n}})^2W(X_{t_{k-1,n}})^2.
$$
Recall that
$$\frac{h_n^{-2/\alpha+2}}{r_n^{2}}=\frac{1}{n},
$$
hence
$$
|Y_n^P(\theta)-\bar Y_n^P(\theta)|\leq C|\theta-\theta_0|^2h_n^{\gamma\varsigma}\left(\frac{1}{n}\sumk V(X_{t_{k-1,n}})^2W(X_{t_{k-1,n}})^2\right).
$$
Since
$$
\frac{1}{n}\sumk V(X_{t_{k-1,n}})^2 W(X_{t_{k-1,n}})^2 = O_P(1), \quad n\to \infty,
$$
the last estimate completes the proof.
\end{proof}

\subsection{Localization}

We introduce a local coordinate $u$ in the neighborhood of  $\theta_0$ with the rate $r_n$:
$$
\theta=\theta_0+r_n^{-1}u,
$$
and define the associated re-scaled objective function as
$$\ba
G_n(u)&:=r_n^{2}H_n(\theta_0+r_n^{-1}u)
\\&=h_{n}^{-1/\alpha}\sum_{k=1}^n\left(\Big|X_{t_{k,n}}-F_h(\theta_0+r_n^{-1}u; X_{t_{k-1,n}} )\Big|-\Big|X_{t_{k,n}}-F_h(\theta_0; X_{t_{k-1,n}})\Big|\right).
\ea
$$
We also introduce the linearized version
$$\ba
\bar G_n(u)&:=r_n^{2}\bar H_n(\theta_0+r_n^{-1}u)
\\&=h_{n}^{-1/\alpha}\sum_{k=1}^n\left(\Big|X_{t_{k,n}}-\bar F_h(\theta_0+r_n^{-1}u; X_{t_{k-1,n}} )\Big|-\Big|X_{t_{k,n}}-\bar F_h(\theta_0; X_{t_{k-1,n}})\Big|\right).
\ea
$$
Denote
$$
U_{n,\varsigma}=\{u:\theta_0+r_n^{-1}u\in \Theta_{n,\varsigma}\}
=\left\{u:\, |u|\leq r_n^{1-\varsigma}\right\}.
$$

\begin{prp}\label{p51} There exists $\varsigma\in(0,1)$ such that
$$
\sup_{u\in U_{n, \varsigma}}|G_n(u)-\bar G_n(u)|=o_P(1), \quad n\to \infty.
$$
\end{prp}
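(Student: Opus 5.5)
Since $G_n(u)-\bar G_n(u)=r_n^2\big(H_n(\theta)-\bar H_n(\theta)\big)$ with $\theta=\theta_0+r_n^{-1}u$, we split the difference along the decompositions \eqref{decomp_H}, \eqref{decomp_U}, \eqref{decomp_Y} and their barred analogues:
$$
G_n(u)-\bar G_n(u)=r_n^2\Big[(U_n^P-\bar U_n^P)+(U_n^M-\bar U_n^M)+(Y_n^M-\bar Y_n^M)+(Y_n^P-\bar Y_n^P)\Big](\theta_0+r_n^{-1}u).
$$
Each bracket is handled by one of Lemmas \ref{l51}, \ref{l52}, \ref{l53}. On $U_{n,\varsigma}$ we have $|\theta-\theta_0|=r_n^{-1}|u|\le r_n^{-\varsigma}$, so $\theta\in\Theta_{n,\varsigma}$ and all three lemmas apply uniformly.

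\emph{Predictable linear part.} By Lemma \ref{l51},
$$
r_n^2|U_n^P-\bar U_n^P|\le r_n^2\, o_P(r_n^{-1}h_n^{\gamma\varsigma})\, r_n^{-1}|u| = o_P\big(h_n^{\gamma\varsigma}r_n^{1-\varsigma}\big)
$$
uniformly on $U_{n,\varsigma}$.

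\emph{Predictable quadratic part.} By Lemma \ref{l53},
$$
r_n^2|Y_n^P-\bar Y_n^P|\le O_P(h_n^{\gamma\varsigma})\,|u|^2 = O_P\big(h_n^{\gamma\varsigma}r_n^{2(1-\varsigma)}\big).
$$

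\emph{Martingale parts.} By Lemma \ref{l52} with some $\upsilon\in(0,1)$,
$$
r_n^2\, o_P(r_n^{-1}h_n^{\gamma\varsigma})\,(r_n^{-1}|u|)^\upsilon = o_P\big(h_n^{\gamma\varsigma}r_n^{1-\upsilon}r_n^{\upsilon(1-\varsigma)}\big).
$$

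Everything thus reduces to a rate comparison: choose $\varsigma<1$ close to $1$ and $\upsilon<1$ close to $1$ so that $h_n^{\gamma\varsigma}r_n^{2(1-\varsigma)}\to0$ and $h_n^{\gamma\varsigma}r_n^{1-\upsilon+\upsilon(1-\varsigma)}\to 0$. This works because $r_n$ grows only polynomially in $h_n^{-1}$. By \eqref{h_step}, $n\le h_n^{-2\delta}$ for large $n$, so
$$
r_n=\sqrt n\,h_n^{1-1/\alpha}\le h_n^{-\delta+1-1/\alpha},
$$
i.e.\ $r_n\le h_n^{-A}$ with $A=\delta+1/\alpha-1>0$. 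It then suffices that $\gamma\varsigma>2A(1-\varsigma)$ and $\gamma\varsigma>A\big(1-\upsilon+\upsilon(1-\varsigma)\big)$. Both hold when $\varsigma,\upsilon$ are close enough to $1$, since the right-hand sides tend to $0$ while the left-hand sides tend to $\gamma>0$. We fix such $\varsigma$ and then $\upsilon$ (Lemma \ref{l52} holds for every $\upsilon\in(0,1)$, and the condition on $\upsilon$ is a strict inequality that survives taking $\upsilon\uparrow1$). With these choices all four terms are $o_P(1)$ uniformly in $u\in U_{n,\varsigma}$.

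The main point to check is that the lemmas really hold uniformly over $\Theta_{n,\varsigma}$ for the chosen $\varsigma$. In particular, Lemma \ref{l51}'s step $|\theta-\theta_0|^\gamma\le h_n^{\gamma\varsigma}$ must be read with care: in Lemma \ref{l51} the exponent $\varsigma$ merely labels the gain $h_n^{\gamma\varsigma}$, and on $\Theta_{n,\varsigma}$ one really has $|\theta-\theta_0|^\gamma\le r_n^{-\gamma\varsigma}$. The required smallness is therefore the slightly different factor $(r_n^{-\gamma\varsigma}+h_n^\gamma)$ in place of $h_n^{\gamma\varsigma}$, and I would rerun the same exponent bookkeeping with it. Because $r_n\to\infty$ polynomially (as $\liminf T_n>0$), $r_n^{-\gamma\varsigma}$ is again a positive power decay. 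Any remaining factor of the form $r_n^{c(1-\varsigma)}$ is absorbed the same way, by taking $\varsigma$ close to $1$.
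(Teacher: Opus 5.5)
Your proposal is correct and follows the paper's proof. It uses the same four-way split of $G_n-\bar G_n$, applies Lemmas \ref{l51}--\ref{l53} on $\Theta_{n,\varsigma}$ to get the bounds $h_n^{\gamma\varsigma}r_n^{1-\varsigma}$, $h_n^{\gamma\varsigma}r_n^{1-\upsilon\varsigma}$ and $h_n^{\gamma\varsigma}r_n^{2-2\varsigma}$, and then takes $\varsigma,\upsilon$ close to $1$; your bound $r_n\le h_n^{-(\delta+1/\alpha-1)}$ is only a cleaner version of the paper's comparison of $h_n^{\gamma\varsigma}$ with powers of $n$. Your caveat on Lemma \ref{l51} is justified: on $\Theta_{n,\varsigma}$ one only has $|\theta-\theta_0|^\gamma\le r_n^{-\gamma\varsigma}$, which is not $\le h_n^{\gamma\varsigma}$ in general (for $\alpha=1$, $r_n=\sqrt n\ll h_n^{-1}$), and your fix with the gain factor $r_n^{-\gamma\varsigma}+h_n^{\gamma}$ does close the argument, since the extra terms $r_n^{1-\varsigma-\gamma\varsigma}$, $r_n^{1-\upsilon\varsigma-\gamma\varsigma}$ and $r_n^{2-2\varsigma-\gamma\varsigma}$ vanish for $\varsigma,\upsilon$ near $1$.
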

\begin{proof} Denote by $U_n^{P,G}(u), U_n^{M,G}(u), Y_n^{P,G}(u), Y_n^{M,G}(u)$ the corresponding parts of the decomposition of $H_n(\theta)$  after the same localization and re-scaling, e.g.
$$
U_n^{P,G}(u):=r_n^{2}U_n^{P}(\theta_0+r_n^{-1}u).
$$
Denote also by $\bar U_n^{P,G}(u), \bar U_n^{M,G}(u), \bar Y_n^{P,G}(u), \bar Y_n^{M,G}(u)$ the similar parts for $\bar H_n(\theta)$. Then, we have the following.
\begin{itemize}
  \item By Lemma \ref{l51},
  $$
  |U_n^{P,G}(u)-\bar U_n^{P,G}(u)|\leq r_n^2\Big(k_{n}^{U,P,\varsigma} r_n^{-1} h_n^{\gamma\varsigma} \Big) |r_n^{-1}u|\leq k_{n}^{U,P,\varsigma} h_n^{\gamma\varsigma} r_n^{1-\varsigma}, \quad u\in U_{n,\varsigma},
  $$
  with  the family $\{k_{n}^{U,P,\varsigma}\}$  bounded in probability;

   \item By Lemma \ref{l52},
  $$
  |U_n^{M,G}(u)-\bar U_n^{M,G}(u)|+|Y_n^{M,G}(u)-\bar Y_n^{M,G}(u)|\leq r_n^2\Big(k_{n}^{M,\varsigma} r_n^{-1} h_n^{\gamma\varsigma} \Big) |r_n^{-1}u|^\upsilon\leq k_{n}^{M,\varsigma} h_n^{\gamma\varsigma} r_n^{1-\upsilon\varsigma}, \quad u\in U_{n,\varsigma},
  $$
    with  the family $\{k_{n}^{M,\varsigma}\}$  bounded in probability;
  \item   By Lemma \ref{l53},
  $$
  |Y_n^{P,G}(u)-\bar Y_n^{P,G}(u)|\leq r_n^2\Big(k_{n}^{Y,P,\varsigma} h_n^{\gamma\varsigma} \Big) |r_n^{-1}u|^2\leq k_{n}^{Y,\varsigma} h_n^{\gamma\varsigma} r_n^{2-2\varsigma}, \quad u\in U_{n,\varsigma}.
  $$
    with  the family $\{k_{n}^{Y,P,\varsigma}\}$  bounded in probability.
\end{itemize}
By Assumption \ref{AssStab}, we have
$$
h_n^{\gamma\varsigma}=o(n^{-(\gamma \varsigma)/(2\delta)}).
$$
Taking $\upsilon, \varsigma$ close enough to 1, we can guarantee
$$
 1-\upsilon\varsigma<\frac{\gamma\varsigma}{2\delta}, \quad 2-2\varsigma <\frac{\gamma\varsigma}{2\delta},
$$
thus proving the required statement.
\end{proof}

\subsection{Limit theorem for the linearized and localized objective functions}

In this section, we establish the following point-wise limit theorem for the (linearized and localized) random field $\bar G_n(\cdot)$.

\begin{prp}\label{p52} The random field $\bar G_n(u), u\in \mathbb{R}^m$ weakly converges, in the sense of finite-dimensional distributions, to
\be\label{G0}
G_0(u)=\Gamma_0^{1/2}\xi\cdot u+\phi_\alpha(0)\Sigma_0 u\cdot u,\quad u\in \mathbb{R}^m,
\ee
 where $\xi$ is  a standard $m$-dimensional Gaussian random vector independent of $\Gamma_0$ and $\Sigma_0$.
\end{prp}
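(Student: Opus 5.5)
We split $\bar G_n(u)$ into its linear and quadratic parts, exactly as in \eqref{decomp_H}--\eqref{decomp_Y}, now for the linear $\bar\kappa_{k,n}(\theta_0+r_n^{-1}u)=r_n^{-1}h_n^{1-1/\alpha}V(X_{t_{k-1,n}})\nabla_\theta a(\theta_0;X_{t_{k-1,n}})\cdot u=n^{-1/2}\ell_{k,n}\cdot u$, where $\ell_{k,n}:=V(X_{t_{k-1,n}})\nabla_\theta a(\theta_0;X_{t_{k-1,n}})$. This gives
$$
\bar G_n(u)=-\Big(n^{-1/2}\sumk \ell_{k,n}\sgn(\zeta_{k,n})\Big)\cdot u+\sumk q\big(\zeta_{k,n},n^{-1/2}\ell_{k,n}\cdot u\big).
$$
Since the limit is linear plus quadratic in $u$, finite-dimensional convergence follows from two facts: joint weak convergence of the random vector $S_n:=n^{-1/2}\sumk \ell_{k,n}\sgn(\zeta_{k,n})$ together with stable convergence (so that mixing with $\Gamma_0,\Sigma_0$ is allowed), and convergence in probability of the quadratic part to $\phi_\alpha(0)\Sigma_0u\cdot u$ for each fixed $u$. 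The sign of the linear term is irrelevant, because $\xi$ is symmetric.

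\textbf{Linear term.} Write $S_n=S_n^M+S_n^P$ with $S_n^P=n^{-1/2}\sumk\ell_{k,n}\E[\sgn\zeta_{k,n}|\Ff_{k-1,n}]$. The argument of Lemma \ref{l41} (the decomposition \eqref{decomp2}, the bound \eqref{Residue_1}, and \eqref{F0} with $\mathfrak f$) gives $|S_n^P|\le Cn^{1/2}h_n^\delta\cdot n^{-1}\sumk V W^2=o_P(1)$ by \eqref{h_step}. For the martingale part we apply a stable martingale CLT for triangular arrays (e.g.\ Jacod's theorem). The conditional variance is $n^{-1}\sumk\ell_{k,n}^{\otimes2}(1-(\E[\sgn\zeta_{k,n}|\Ff_{k-1,n}])^2)$, and since the conditional mean is $O(h_n^\delta W)$ this converges to $\Gamma_0$. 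In the finite horizon case the convergence is a Riemann sum for $\frac1T\int_0^T(\nabla_\theta a)^{\otimes2}dt$ (recall $V\equiv1$). In the ergodic case it is the LLN of Appendix \ref{sB2}, with Assumption \ref{A_moment} handling the localization and truncation by $\chi(R^{-1}\cdot)$. The Lindeberg condition is immediate from the increments being bounded by $n^{-1/2}|\ell_{k,n}|$ together with the uniform $L^{p}$ bound, $p>1$. For stability we must check that the conditional covariation with any bounded martingale orthogonal to the driving noise, and with the noise $Z$ itself, vanishes. This is the main obstacle. We would show that $n^{-1/2}\sumk \ell_{k,n}\E[\sgn(\zeta_{k,n})(Z_{t_{k,n}}-Z_{t_{k-1,n}})\wedge 1\,|\,\Ff_{k-1,n}]\to0$. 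The leading stable approximation to the increment is symmetric, and the contribution of odd functions of it vanishes, so each summand is of order $h_n^{\delta}\cdot h_n^{1/\alpha}$ (or smaller), which is negligible. Convergence is then stable with limit $MN(0,\Gamma_0)$.

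\textbf{Quadratic term.} Set $v_{k}=n^{-1/2}\ell_{k,n}\cdot u$, which is small uniformly on $\{|X|\le R\}$. Its predictable part is $\sumk\int q(z,v_k)\psi_{h_n}(X_{t_{k-1,n}},z)\,dz$, with $\psi$ as in the proof of Lemma \ref{l53}. By Theorem \ref{tA2}, $\psi_{h_n}(x,z)=\sigma_{h_n}(x)^{-1}\phi_\alpha\big((z+O(h_n^{\delta}W(x)))/\sigma_{h_n}(x)\big)+O(h_n^{\delta'})$. Because $q(\cdot,v)$ is supported on an interval of length $|v|$ with $\int q(z,v)dz=v^2$, and $\phi_\alpha$ is Lipschitz near $0$, we get
$$
\int q(z,v_k)\psi_{h_n}\,dz=v_k^2\big(\phi_\alpha(0)/\sigma(X_{t_{k-1,n}})+o(1)\big),
$$
where $\sigma_{h_n}\to\sigma$ uniformly on compacts. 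Summing gives $n^{-1}\sumk\sigma^{-1}V^2(\nabla_\theta a\cdot u)^2$ in the finite horizon case, with $V\equiv1$ so that this matches $\Sigma_0$. In the infinite horizon case the sum is $n^{-1}\sumk \frac{V^2}{\sigma}(\nabla_\theta a\cdot u)^2$, and we would reconcile the weight ($V$ versus $V^2$) with the stated definition of $\Sigma_0$, since the definitions of $\bar\kappa$ and $\Sigma_0$ must be matched. The LLN then gives the limit $\phi_\alpha(0)\Sigma_0u\cdot u$. The martingale part of the quadratic term has conditional variance at most $\sumk \E[q^2|\Ff_{k-1,n}]\le C\sumk|v_k|^3=O_P(n^{-1/2})$, because $q^2\le 4v^2$ and $q$ is supported on a set of conditional probability $O(|v|)$. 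Hence the martingale part vanishes, and combining the two terms yields \eqref{G0}.
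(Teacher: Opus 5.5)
The gap is the stable convergence of the martingale part $\Xi_n$ when $T_n\to T$. This is exactly what produces the \emph{mixed} normal limit, and you flag it as the main obstacle but do not resolve it. Your decomposition, the treatment of the predictable linear part, the conditional-variance and Lindeberg checks, and the quadratic term otherwise follow the paper.

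$Z$ cannot serve as a reference martingale in Jacod's criterion: it is pure-jump and in general not even square-integrable. So, as in the paper, one takes the reference martingale to be $0$ and must show $\sum_{k\le[nt]}\E[(N_{t_{k,n}}-N_{t_{k-1,n}})\chi_k^n\mid\Ff_{k-1,n}]\to0$ for \emph{every} bounded martingale $N$. Your split into ``$Z$ itself'' plus ``martingales orthogonal to the driving noise'' is the Brownian picture, and it does not fit here. In a Poisson-driven filtration there are infinitely many directions $\int\!\!\int H(s,u)\,\widetilde{N^Z}(ds,du)$, e.g.\ the compensated number of jumps with $|u|>1$. These are neither multiples of $Z$ nor orthogonal to the noise, and for the ``orthogonal'' class you give no argument at all. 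Moreover, $X$ is only a weak solution, so you do not know that $\chi_k^n$ is a functional of the noise increments. Even orthogonality to all $\widetilde{N^Z}$-integrals would therefore not by itself kill $\E[\chi_k^n\Delta_kN\mid\Ff_{k-1,n}]$.

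What is missing is a martingale-representation step, which the paper supplies as follows:
\begin{itemize}
\item pass to the canonical space and use well-posedness of the martingale problem for $\mathcal L$ together with the Jacod--Yor theorem, so that $\widetilde{N^Z}$-integrals are dense in $\mathcal H^1(\P)$;
\item approximate a bounded $N$ in $\mathcal H^2$ by some $J\in\mathcal J_0$ (bounded integrand vanishing for small $|u|$);
\item note that such $J$ has an $O(h_n)$ compensator and jumps on a sampling interval with probability $O(h_n)$; with $|\chi_k^n|\le Cn^{-1/2}$ after localization, each term is $O(h_nn^{-1/2})$ and the sum is $O(n^{-1/2})$;
\item control $N-J$ by Cauchy--Schwarz.
\end{itemize}

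Your argument for the test against $Z$ is also wrong as stated. Since $\zeta_{k,n}\approx\sigma(X_{t_{k-1,n}})h_n^{-1/\alpha}\Delta_kZ$, the product $\sgn(\zeta_{k,n})\,\Delta_kZ\approx|\Delta_kZ|$ is an \emph{even} function of the leading stable variable, so there is no cancellation. Its conditional mean is of order $\E[|Z_{h_n}|\wedge1]$, namely $h_n^{1/\alpha}$, $h_n\log(1/h_n)$ or $h_n$ for $\alpha>1$, $\alpha=1$, $\alpha<1$, not $h_n^{\delta+1/\alpha}$. The sum is still $o_P(1)$ for $T_n\to T$, by the crude bound $n^{1/2}\E[|Z_{h_n}|\wedge1]\to0$ (using $\alpha<2$ and $nh_n\to T$), but not for the reason you give. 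In the ergodic case no stability is needed, since $\Gamma_0,\Sigma_0$ are deterministic; the plain martingale CLT suffices, as in the paper.

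Two smaller points. First, the $V$ versus $V^2$ question resolves in favour of the stated $\Sigma_0$. The variable $\zeta_{k,n}$ carries the factor $V(X_{t_{k-1,n}})$ and $q(Vx,Vv)=Vq(x,v)$. Hence the conditional mean of $q(\zeta_{k,n},\bar\kappa_{k,n})$ is $V\int q(z,n^{-1/2}\nabla_\theta a\cdot u)\,\psi_{h_n}(X_{t_{k-1,n}},z)\,dz\approx\frac{V}{\sigma}\phi_\alpha(0)\frac{(\nabla_\theta a\cdot u)^2}{n}$. Your $V^2$ came from integrating $q(z,v_k)$ against $\psi_{h_n}$, which is the density of the \emph{unweighted} residual; the paper's display has the same slip, but its $\Sigma_n$ is correct. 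Second, your rate $C\sum_k|v_k|^3=O_P(n^{-1/2})$ needs a third moment of $VW$ that Assumption~\ref{A_moment} does not give. The $o_P(1)$ you actually need follows from $|v|^2\wedge|v|^3\le|v|^{2p}$ with $2p\in(2,3]$, as in the paper.
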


\begin{proof} Denote
$$
\Xi_n=-\frac{1}{\sqrt{n}}\sumk \Big(\sgn (\zeta_{k,n})- \E[\sgn (\zeta_{k,n})|\Ff_{k-1,n}]\Big)V(X_{t_{k-1,n}})\nabla_\theta a(\theta_0;X_{t_{k-1,n}}),
$$
$$
\Gamma_{n}=\frac{1}{n}\sumk V(X_{t_{k-1,n}})^2 \left(\nabla_\theta a(\theta_0;X_{t_{k-1,n}})\right)^{\otimes 2},
$$

$$
\Sigma_n=\frac{\phi_\al(0)}{n}\sumk \frac{V(X_{t_{k-1,n}})}{\sigma (X_{t_{k-1,n}})}\left(\nabla_\theta a(\theta_0, X_{t_{k-1,n}})\right)^{\otimes 2}.
$$
Let us show that, for any $u\in  \mathbb{R}^m$,
\be\label{linearG}
\bar G_n(u)= \Xi_n \cdot u+\Sigma_n u\cdot u+o_P(1), \quad n\to \infty.
\ee
We will show in Appendix \ref{sB2} below that
\be\label{CLT}
(\Xi_n, \Gamma_n, \Sigma_n)\Longrightarrow (\Gamma_0^{1/2}\xi, \Gamma_0, \Sigma_0),
\ee
which together with \eqref{linearG} will yield the required statement. We have $\bar G_n(0)=0$, hence \eqref{linearG} for $u=0$ is trivial; therefore we consider further the case $u\not=0$, only.

We analyse the terms in the decomposition separately for $\bar G_n(u)$. First, we have simply
$$\ba
\bar U_n^{M,G}(u)&=\bar U_n^{M}(\theta_0+r_n^{-1}u)=\sumk \bar u_{k,n}^{M}(\tz+r_{n}^{-1}u)
\\&=-
\sumk \Big(\sgn (\zeta_{k,n})- \E[\sgn (\zeta_{k,n})|\Ff_{k-1,n}]\Big) \bar \kappa_{k,n}(\tz+r_{n}^{-1}u).
\ea
$$
Since
$$
\bar \kappa_{k,n}(\tz+r_{n}^{-1}u) =h_n^{1-1/\alpha}r_{n}^{-1} V(X_{t_{k-1,n}}) \nabla_\theta a(\theta_0; X_{t_{k-1,n}}) \cdot u
= \frac{1}{\sqrt{n}} V(X_{t_{k-1,n}}) \nabla_\theta a(\theta_0; X_{t_{k-1,n}}) \cdot u, 
$$
we obtain
$$
\bar U_n^{M,G}(u)= \Xi_n\cdot u.
$$
That is, the linear part in the right-hand side of \eqref{linearG} equals the martingale part of the `linear' term $\bar U_n^{G}(u)$. Also, it is easy to show that the corresponding predictable part is negligible:
$$
\bar U_n^{P,G}(u)=o_P(1), \quad n\to \infty.
$$
Indeed, for $u\not=0$, we have \eqref{Delta_neglig} valid for 
$\bar U^P_n(\theta)$: the proof remains literally the same, with $\kappa_{k,n}(\theta)$ in \eqref{kappa0} replaced by $\bar \kappa_{k,n}(\theta)$. 
Then,
$$
\frac{|\bar U_n^{P,G}(u)|}{|u|}=r_n\,\frac{|\bar U^P_n(\tz+r_n^{-1}u)|}{|r_n^{-1}u|}
\le r_n \sup_{\theta\ne\tz}\frac{|\bar U^P_n(\theta)|}{|\theta-\tz|} = o_P(1), \quad n\to \infty,
$$
which proves the required convergence.

Next, we analyze the predictable and martingale parts of the `quadratic' term $\bar Y_n^{G}(u)$. Denote
$$
Q(x,\upsilon)=\upsilon^{-2}q(x,\upsilon), \quad \upsilon\not=0.
$$
Then, $Q(x,\upsilon)$ is a family of probability densities with respect to $x$ such that
\be\label{Q_conv}
Q(x,\upsilon)\, dx\Rightarrow\delta_0(dx),\quad \upsilon\to 0.
\ee
We have
\begin{align}
\bar Y_n^{P,G}(u)&=
\sumk \left(\bar{\kappa}_{k,n}(\tz+r_{n}^{-1}u)\right)^{2}
\int_{\Re}Q\left(z,\bar \kappa_{k,n}(\tz+r_{n}^{-1}u)\right)\psi_{h_n}(X_{t_{k-1,n}},z)\, dz;
\label{hm:Y2G_eq}
\end{align}
see the proof of Lemma \ref{l53} for the notation $\psi_t(x,z)$. For any $R>0$, by \eqref{Q_conv} and  Theorem \ref{tA2} we have
$$
\int_{\Re}Q\left(z,\upsilon\right)\psi_{t}(x,z)\, dz\to\frac{1}{\sigma(x)}\phi_\al(0), \quad \upsilon\to 0, \quad  t\to 0+
$$
uniformly in $|x|\leq R$.  We have
$$
|\bar \kappa_{k,n}(\tz+r_{n}^{-1}u)|\leq C_R n^{-1/2}
\quad \hbox{ on } \{|X_{t_{k-1,n}}|\leq R\},
$$
hence for any $R$,
\begin{align}
& \sumk \left(\bar{\kappa}_{k,n}(\tz+r_{n}^{-1}u)\right)^{2}
\nn\\
&{}\qquad \times \bigg|
\int_{\Re}Q\left(z,\bar \kappa_{k,n}(\tz+r_{n}^{-1}u)\right)\psi_{h_n}(X_{t_{k-1,n}},z)\, dz
-\frac{1}{\sigma(X_{t_{k-1,n}})}\phi_\al(0)\bigg|\,1_{|X_{t_{k-1,n}}|\leq R}=o_P(1).
\nn
\end{align}
Since $\psi_t(x,z)$ is bounded and $\sigma(x)$ is separated from $0$, we get
$$\left|
\int_{\Re}Q\left(z,\bar \kappa_{k,n}(\tz+r_{n}^{-1}u)\right)\psi_{h_n}(X_{t_{k-1,n}},z)\, dz-\frac{1}{\sigma(X_{t_{k-1,n}})}\phi_\al(0)\right|\leq C.
$$
We also have (recall Assumption \ref{Ass2}; we may and do set $h_n\le 1$)
$$\ba
\left(\bar{\kappa}_{k,n}(\tz+r_{n}^{-1}u)\right)^{2}&=\frac{1}{n}\left(\nabla_\theta a(\theta_0, X_{t_{k-1,n}})\cdot u\right)^2V(X_{t_{k-1,n}})^2\leq
\frac{|u|^2}{n}V(X_{t_{k-1,n}})^2W(X_{t_{k-1,n}})^2.
\ea$$
Hence
$$\ba
\Upsilon_{n,R}(u)&:=\sumk \left(\bar{\kappa}_{k,n}(\tz+r_{n}^{-1}u)\right)^{2}
\\&\hspace*{1cm}\times\left|
\int_{\Re}Q\left(z,\bar \kappa_{k,n}(\tz+r_{n}^{-1}u)\right)\psi_{h_n}(X_{t_{k-1,n}},z)\, dz-\frac{1}{\sigma(X_{t_{k-1,n}})}\phi_\al(0)\right|1_{|X_{t_{k-1,n}}|> R}
\\& \leq \frac{C|u|^2}{n} \sumk V(X_{t_{k-1,n}})^2W(X_{t_{k-1,n}})^2  1_{|X_{t_{k-1,n}}|> R}
\\&\leq C|u|^2\left(\frac{1}{n} \sumk V(X_{t_{k-1,n}})^p W(X_{t_{k-1,n}})^p\right)^{2/p} \left(\frac{1}{n} \sumk 1_{|X_{t_{k-1,n}}|> R}\right)^{1-2/p}
\ea$$
for any $p>2$. We have
$$
\limsup_{n\to \infty}\P\left(\frac{1}{n} \sumk 1_{|X_{t_{k-1,n}}|> R}>\eps\right)\to 0, \quad R\to \infty
$$
for any $\eps>0$: for the finite observation horizon, this is trivial, for the infinite observation horizon, this follows by ergodicity of the process $X$, see Appendix \ref{sB2}. Thus, using  Assumption \ref{A_moment}, we get
$$
\limsup_{n\to \infty}\P(\Upsilon_{n,R}(u)>\eps)\to 0, \quad R\to \infty,
$$
for any $u\in \mathbb{R}^m, \eps>0$. Summarizing all the above yields that, for any $u\in \mathbb{R}^m$,
$$
\bar Y_n^{G}(u)=\Sigma_n u\cdot u+o_p(1), \quad n\to \infty.
$$

Finally, we show that the martingale part  $\bar Y^{M,G}_n(u)$ of the `quadratic' term is negligible:
\be\label{m1_neglig}
\bar Y^{M,G}_n(u)=\sumk\bar y_{k,n}^{M}(\tz+r_{n}^{-1}u)=o_P(1), \quad n\to \infty.
\ee
Indeed, we have
$$
\E[q(\zeta_{k,n},\bar \kappa_{k,n}(\theta))^2|\Ff_{k-1,n}]=\int_{\Re}q\left(z,\bar \kappa_{k,n}(\tz+r_{n}^{-1}u)\right)^2\psi_{h_n}(X_{t_{k-1,n}},z)\, dz.
$$
Each of the functions $\psi_{h_n}(x,z)$ is a probability density with respect to $z$, and these functions are bounded. In  addition, $q(x,v)$ is bounded by $2|v|$ and supported by $[-|v|, |v|]$. Summarizing all these properties, we get
$$
  \int_{\Re} q\left(z,v\right)^2\psi_{h_n}(x,z)\, dz\leq C(|v|^2\wedge |v|^3)
  $$
 Without loss of generality we can assume $2p\in (2,3]$ in the moment Assumption \ref{A_moment}; for such $p$, the above estimate yields
 $$
  \int_{\Re} q\left(z,v\right)^2\psi_{h_n}(x,z)\, dz\leq C|v|^{2p}.
  $$
  Then, in the infinite observation horizon case, using Assumption \ref{A_moment}, we get that
  $$\ba
 \E\left[\left( \sumk\bar y_{k,n}^{M,G}(\tz+r_{n}^{-1}u)\right)^2\right] 
 &\leq C\E\left[\sumk|\bar \kappa_{k,n}(\tz+r_{n}^{-1}u)|^{2p}\right]
 \\&\leq
\frac{C|u|^p}{n^p}\sumk V(X_{t_{k-1,n}})^{2p}W(X_{t_{k-1,n}})^{2p}\to 0, \quad n\to \infty,
\ea
  $$
  which proves \eqref{m1_neglig}.
  As for the finite observation horizon case, \eqref{m1_neglig} readily follows from the Lenglart domination inequality \cite[Section 2.1.7]{JacPro12}.
Summarizing all the above, we derive the representation \eqref{G0}, which completes the proof. 
\end{proof}

Combining Proposition \ref{p51} and Proposition \ref{p52}, we get the following.

\begin{cor}\label{c51} For any compact set $K\subset \mathbb{R}^m$, the random fields $\{G_n(u): u\in K\}$ and $\{\bar G_n(u): u\in K\}$  weakly converge to $\{G_0(u): u\in K\}$ in $C(K)$.
\end{cor}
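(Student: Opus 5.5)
The plan is to first prove the statement for $\bar G_n$ and then transfer it to $G_n$ using Proposition \ref{p51}. Weak convergence in $C(K)$ follows from convergence of finite-dimensional distributions together with tightness (or a substitute for it). Proposition \ref{p52} already gives finite-dimensional convergence of $\bar G_n$ to $G_0$. So the only missing ingredient is uniform control on $K$.

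We will not verify a modulus-of-continuity condition directly. Instead we use convexity: each $\bar G_n$ is a convex function of $u$, because $\bar\kappa_{k,n}$ is linear in $u$ and $v\mapsto|x-v|$ is convex. The limit $G_0$ is also convex and continuous, since $\Sigma_0$ is non-negative definite. For convex random functions, the classical argument (Pollard's convexity lemma; Andersen--Gill, Hjort--Pollard) shows that finite-dimensional convergence implies weak convergence in $C(K)$ on compacts. Concretely, we fix a compact $K$ and a slightly larger cube $K'\supset K$ with a finite grid $D_\eps\subset K'$ of mesh $\eps$. A convex function on $K'$ is Lipschitz on $K$ with a constant controlled by its oscillation over the vertices of $K'$. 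Moreover, its sup-distance on $K$ to the piecewise-linear interpolant built from its values on $D_\eps$ is bounded in terms of these grid values and $\eps$.

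For the tightness step, we work on the Skorokhod representation space, or equivalently along subsequences. The vector $(\bar G_n(u))_{u\in D_\eps}$ converges in law, hence is tight. By the Lipschitz bound, $\sup_{u,u'\in K,\,|u-u'|\le\eps}|\bar G_n(u)-\bar G_n(u')|\le L_n\eps$, where $L_n$ is a tight random variable depending only on the values at the vertices of $K'$. Letting $\eps\to0$ gives asymptotic equicontinuity, so $\{\bar G_n\}$ is tight in $C(K)$ and, by Proposition \ref{p52}, converges weakly to $G_0$. The only care needed is that the limit field in \eqref{G0} is mixed-normal, with $\xi$ independent of $(\Gamma_0,\Sigma_0)$. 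This causes no difficulty, because the convexity argument is purely deterministic once applied to realizations.

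To pass to $G_n$, note that $K\subset U_{n,\varsigma}=\{|u|\le r_n^{1-\varsigma}\}$ for all large $n$, since $r_n\to\infty$. Proposition \ref{p51} then gives $\sup_{u\in K}|G_n(u)-\bar G_n(u)|=o_P(1)$. Each $G_n$ is continuous, and Slutsky's lemma in the separable metric space $C(K)$ then yields $G_n\Rightarrow G_0$ as well.

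The main obstacle is the convexity-to-uniformity step: one must state cleanly the deterministic lemma that pointwise convergence of convex functions on a dense countable set implies locally uniform convergence. Its in-law version is obtained via subsequence and almost-sure representation arguments applied to the countable grid.
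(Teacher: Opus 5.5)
Your argument is essentially the paper's: you reduce to $\bar G_n$ via Proposition \ref{p51}, and you combine convexity of $\bar G_n$ with the finite-dimensional convergence of Proposition \ref{p52} to get Lipschitz constants (i.e.\ subgradients) on $K$ that are bounded in probability, hence tightness in $C(K)$. One small correction: the Lipschitz constant of a convex $g$ on $K$ is not controlled by its values at the vertices of $K'$ alone (e.g.\ $M(|u|^2-m)$ vanishes at every vertex of $[-1,1]^m$ for every $M$). It is controlled once you add the value at the centre $c$, since $\inf_{K'} g\ge 2g(c)-\max_{\mathrm{vertices}} g$ and $\mathrm{Lip}_K(g)\le \mathrm{osc}_{K'}(g)/\mathrm{dist}(K,\partial K')$.
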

\begin{proof} By Proposition \ref{p51}, it is enough to prove the required weak convergence for $\bar G_n(u), u\in K$. These random fields are convex and the limiting random field $G_0(u), u\in \mathbb{R}^m$ is bounded in probability on every ball. Therefore, it is easy to show, using Proposition \ref{p52}, that the (sub-) gradients of  $\bar G_n(u),u\in \mathbb{R}^m$ are bounded in probability on every ball. This yields that the family of the distributions of $\{\bar G_n(u): u\in K\}$ in $C(K)$ is tight. Combined with the convergence of finite-dimensional distributions, this gives the required weak convergence in $C(K)$.
\end{proof}

\subsection{Completion of the proof}

Now we are ready to complete the proofs of Theorems \ref{t1} and \ref{t2}. First, we prove that
\be\label{cons_rate_1}
\hat u_n:=r_n(\hat \theta_n-\theta_0)=\mathop{\mathrm{argmin}}\limits_{u}G_n(u)=O_P(1),
\ee
which actually shows that \eqref{cons_rate} holds true for $\varsigma=1$. For that purpose, we make a simple observation that, for a convex function $\mathfrak{G}:\mathbb{R}^m\to \mathbb{R}$ with $\mathfrak{G}(0)=0$, for any $R$,
$$
\mathfrak{G}(u)>1, \ |u|=R \quad \Longrightarrow \quad \mathfrak{G}(u)> 1, \ |u|\geq R.
$$
Fix $R>0$ and let $n$ large enough so that $R<r_n^{1-\varsigma}$. 
By the convexity of $\bar{G}_n$ and Corollary \ref{c51},
$$
\liminf_{n\to\infty}\P\left(\inf_{u\in U_{n, \varsigma}:\,|u|\geq R}\bar G_n(u)> 1\right)
\geq \liminf_{n\to\infty}\P\left(\inf_{|u|= R}\bar G_n(u)> 1\right)
\geq \P\left(\inf_{|u|=R} G_0(u)> 1\right).
$$
Choose $\varsigma<1$ close enough to 1, so that Proposition \ref{p51} holds. Then by \eqref{cons_rate} and Proposition \ref{p51}, we have
$$
\P(\hat u_n\in U_{n, \varsigma})\to 1, \qquad 
\limsup_{n\to\infty}\P\left(\sup_{u\in U_{n, \varsigma}}| G_n(u)-\bar G_n(u)|> \frac12\right)=0.
$$
Recall that $G_n(\hat u_n)\leq G_n(0)=0$.
Building on the above observations, we have for any $R>0$,
\begin{align}
\limsup_{n\to\infty}\P(|\hat u_n|\geq R)
&\le \limsup_{n\to\infty}\P(R\le |\hat u_n|\le r_n^{1-\varsigma})
\nn\\
&\le \limsup_{n\to\infty}\P\left(\inf_{u:\,R\le |u|\le r_n^{1-\varsigma}} G_n(u) \le \frac12\right)
\nn\\
&\le 1- \liminf_{n\to\infty}\P\left(\inf_{u:\,R\le |u|\le r_n^{1-\varsigma}} G_n(u) > \frac12\right)
\nn\\
&\le 1- \liminf_{n\to\infty}\P\left(\inf_{u:\,R\le |u|\le r_n^{1-\varsigma}} \bar{G}_n(u) > 1\right)
\nn\\
&{}\qquad + \limsup_{n\to\infty}\P\left(\sup_{u\in U_{n,\varsigma}} |G_n(u)-\bar{G}_n(u)| > \frac12\right)
\nn\\
&\leq 1 - \P\left(\inf_{|u|=R} G_0(u)> 1\right).
\nn
\end{align}
The limiting random field $G_0(u)$ is a quadratic function with the matrix $\phi_\alpha(0)\Sigma_0$ of the quadratic part; see \eqref{G0}.
This matrix is non-degenerate by the local identifiability assumption, hence
$$
\P\left(\inf_{|u|=R} G_0(u)> 1\right)\to 1, \quad R\to \infty,
$$
which completes the proof of \eqref{cons_rate_1}.

Combining \eqref{cons_rate_1} with Corollary \ref{c51}, 
we can apply the standard argmin argument to deduce that $\hat u_n\Rightarrow \hat u_0$, 
where
$$
\hat u_0 := - \frac1{2\phi_\alpha(0)}\Sigma_0^{-1}\Gamma_0^{1/2}\xi
$$
is the unique minimal point $\hat u_0$ of the random field $G_0(u)$.
This random point has the (mixed) normal distribution with the parameters $0$ and $\left(2\phi_{\al}(0)\right)^{-2}\Sigma_{0}^{-1}\Gamma_{0}\Sigma_{0}^{-1}$, 
thus completing the proofs of Theorems \ref{t1} and \ref{t2}.

\section{Consistent estimator of asymptotic covariance matrix}
\label{hm:sec_ceAV}

The objective of this section is to construct a consistent estimator of the asymptotic possibly random covariance matrix
\begin{equation}
\mathcal{A}_0 := \left(2\phi_{\al}(0)\right)^{-2}\Sigma_{0}^{-1}\Gamma_{0}\Sigma_{0}^{-1}
\label{hm:AV}
\end{equation}
of the LAD estimator.
To that end, we will separately consider $\phi_{\al}(0)$, $\Sigma_0$, and $\Gamma_0$. 

\textit{Throughout this section, we keep the assumptions given in Sections \ref{hm:sec_setup} and \ref{sec:main.results} valid.} 
Just for brevity, we here suppose that $T_n\equiv T\in(0,\infty)$ for the finite observation horizon case.
Recall that, in case of $T_n\to\infty$, we denote by $q>0$ the tail index of the {\lp} $Z$ (see Assumption \ref{A_diss}); this implies that $\E[|Z_1|^{q}]<\infty$. Denote by $\|\cdot\|_{TV}$ the total variation norm and by 
\begin{equation}
    P_t(\tz; x, dy):=\P[X_t\in dy|X_0=x]
\end{equation}
the transition probability function of $X$. 
Denote by $\P^{X_0}$ the marginal distribution of $X_0$.
Throughout this section, we will work under the following additional conditions
\footnote{
There may be some redundancy with the main assumptions given in Section \ref{sec:main.results}, but no confusion will occur.
}.

\begin{asm}
\label{hm:asm-1}
\begin{enumerate}
    \item The index $\al$ and the finite range drift-H\"{o}lder exponent $\eta\in(0,1]$ satisfy that
    \begin{equation}
    \al > \frac{1}{\eta+1}.
    \end{equation}

    \item When $T_n\to\infty$, we additionally have
    \begin{equation}\nn
    \sup_{s\ge 0}\E[|X_s|^q]<\infty
    \end{equation}
    and also
    \begin{equation}
\label{hm:exp.mix}
    \int \|P_t(\tz; x, dy)-\pi(\tz; dy)\|_{TV} \P^{X_0}(dx) \lesssim (1+t)^{-\mathfrak{a}}
\end{equation}
    for some $\mathfrak{a}>0$. Further, $T_n \gtrsim h_n^{-c'}$ for some $c'>0$.
\end{enumerate}
\end{asm}

Recall the balance condition \eqref{balance}. 
Assumption \ref{hm:asm-1}.1 entails that
\begin{equation}\label{hm:al>1/2}
    \al>\frac12
\end{equation}
and that
\begin{equation}
    \al + \min\{\al, 1\} \, \eta>1.
    \label{hm:lem_1-8}
\end{equation}

Let
\begin{equation}
    f(x,\theta):=(\nabla_\theta a(\theta;x))^{\otimes 2}.
\end{equation}

\begin{asm}
\label{hm:asm-2}
The function $f(x,\theta)$ is continuously differentiable in $\theta$ for each $x$, and we have the following.
\begin{enumerate}
    \item When $T_n\equiv T$,
    \begin{align}
        & \sup_\theta |\partial_\theta^l f(x,\theta)| \lesssim 1+|x|^C, \qquad l\in\{0,1\},\nn\\
        & \sup_\theta |f(x,\theta) - f(y,\theta)| \lesssim (1+|x|+|y|)^C |x-y|.
    \end{align}
    
    \item When $T_n\to\infty$, the (nonnegative and bounded) weight function $V(x)$ is globally Lipschitz and the function $(x,\theta) \mapsto \left(V(x) f(x,\theta), V(x) \partial_\theta f(x,\theta)\right)$ is essentially bounded.
    Moreover,
    \begin{equation}\label{hm:asm-2-2}
        \lim_{\eps\to 0} \sup_\theta  \sup_{|x-y|\le \eps}
        \big|V(x) f(x,\theta) - V(y)f(y,\theta) \big| = 0.
    \end{equation}
\end{enumerate}
\end{asm}

Then, the property \eqref{hm:asm-2-2} also holds with $V(\cdot)$ replaced by $V(\cdot)^2/\sig(\cdot)$.

\subsection{Activity index}

Recall that the symmetric $\alpha$-stable density $\phi_\al$ is associated with the characteristic function
\begin{equation}
\xi\mapsto \exp(-|\xi|^\al)
=\exp\left( \int(\cos(\xi u) - 1)\mu_{\al}(u)
\right).
\nonumber
\end{equation}
The density satisfies that
\begin{equation}
    \phi_\al(0) = \frac{1}{\pi}\int_0^\infty \exp(-\xi^\al)d\xi = \frac{1}{\pi}\Gamma\left(1+\al^{-1}\right).
\nonumber
\end{equation}
In this section, we will construct an estimator $\hat{\al}_n$ of the index $\al$ such that
\begin{equation}
\exists \kappa>0,\quad h_n^{-\kappa}(\hat{\al}_n - \al) = O_P(1)
\label{hm:def_aes}
\end{equation}
with leaving the scale coefficient $\sig$ unknown, for both cases $T_n\to\infty$ and $T_n\equiv T\in(0,\infty)$. 
Under \eqref{hm:def_aes}, we have $\aes\cip\al$ and 
\begin{align}
& \phi_{\hat{\al}_n}(0) = \frac{1}{\pi}\Gamma\left(1+\hat{\al}_n^{-1}\right) \cip \frac{1}{\pi}\Gamma\left(1+\al^{-1}\right) =\phi_{\al}(0),
\label{hm:h^al-rate-gam} \\
& h_n^{-1/\hat{\al}_n} = h_n^{-1/\al}(1+o_P(h_n^{\kappa-\varepsilon}))
\label{hm:h^al-rate}
\end{align}
for any $\varepsilon>0$.
The latter ensures that $\sqrt{n}h_n^{1-1/\alpha}(\tes-\tz)$ and $\sqrt{n}h_n^{1-1/\aes}(\tes-\tz)$ are asymptotically equivalent, hence have the same asymptotic mixed-normal distribution.

Let us denote the $k$th increment of $X$ by $\Delta_k X = X_{t_{k,n}} - X_{t_{k-1,n}}$.
To estimate $(\phi_{\al}(0), \Sigma_0, \Gamma_0)$, we will make use of the second-order increments
\begin{equation}
\Delta_k^2 X := \Delta_k X - \Delta_{k-1}X = X_{t_{k,n}} + X_{t_{k-2,n}} - 2 X_{t_{k-1,n}}
\nonumber
\end{equation}
for $2\le k\le n$, which quantitatively weakens the effect of the drift term. 
We also define the third-order differences with one lag by $\Delta_k^2 X - \Delta_{k-2}^2 X$ for $4\le k\le n$.
Analogous symbols are used for the increments of $Z=Z^{(\al)}+Z^{\triangle}$ (see \eqref{hm:Z-decomposition}). 
Recall that $h_n^{-1/\al} Z_{h_n} = h_n^{-1/\al} Z^{(\al)}_{h_n} + o_p(1) \wc S_\al$ ($h_n\to 0$) under the present assumptions, where $S_\al$ denotes the $\al$-stable distribution corresponding to the density $\phi_\al$. It follows that
\begin{align}
(2h_n)^{-1/\al}\Delta_{k}^2 Z \wc S_{\al},\qquad 
(4h_n)^{-1/\al}(\Delta_{k}^2 Z -\Delta_{k-2}^2 Z) \wc S_{\al}
\nn
\end{align}
for each $k\ge 4$.

For notational convenience, we will often write $\int_{k}$ for $\int_{t_{k-1,n}}^{t_{k,n}}$ and denote $a_s=a(X_s,\tz)$ and $\sig_s=\sig(X_s)$.
With slight abuse of notation, we further abbreviate as $a_{k}=a_{t_{k,n}}$, $\sig_{k}=\sig_{t_{k,n}}$, and so on.
Then, we can write
\begin{align}\label{hm:inc.sum}
(2h_n)^{-1/\al}\Delta_k^2 X &= \sig_{k-2} (2h_n)^{-1/\al}\Delta_{k}^2 Z + 2^{-1/\al} \overline{r}_k,
\end{align}
where $\overline{r}_{j}=r_{a,k}+r_{\sig,k}$ with
\begin{align}
r_{a,k} &:= h_n^{-1/\al} \left(\int_{k} (a_s - a_{k-2})ds - \int_{k-1} (a_s - a_{k-2})ds\right),
\nn\\
r_{\sig,k} &:= h_n^{-1/\al} \left(\int_{k} (\sig_{s-} - \sig_{k-2})dZ_s - \int_{k-1} (\sig_{s-} - \sig_{k-2})dZ_s\right).
\nonumber
\end{align}
Further by \eqref{hm:inc.sum},
\begin{align}
(4h_n)^{-1/\al}(\Delta_{k}^2 X -\Delta_{k-2}^2 X)
&= \sig_{j-4}(4h_n)^{-1/\al}(\Delta_{k}^2 Z -\Delta_{k-2}^2 Z)
\nn\\
&{}\qquad + 
(\sig_{k-2}-\sig_{k-4})(4h_n)^{-1/\al}\Delta_{k}^2 Z
+4^{-1/\al}(\overline{r}_k - \overline{r}_{k-2}).
\label{hm:inc.sum2}
\end{align}


To construct an estimator of $\al$, we introduce
\begin{align}\label{hm:def_PV}
\mathsf{H}_{1,n}(\rho) &:= \frac{1}{n-1}\sum_{k=2}^{n}\big|(2h_n)^{-1/\al}\Delta_k^2 X\big|^{\rho},
\end{align}
where $\rho\in(0,\min\{1,\al\})$, and where $\rho\in(0,\min\{1,\al,q\})$ when $T_n\to\infty$; recall that $q>0$ is the tail index given in Assumption \ref{A_diss}.

We begin with handling the remaining terms. 

\begin{lem}
\label{hm:lem_1}
Pick a constant $\rho>0$ such that
\begin{itemize}
    \item $\rho\in\left(0, \min\{1, \al\}\right)$ for $T_n\equiv T$;
    \item $\rho\in\left(0, \min\{1, \al, q\}\right)$ for $T_n\to\infty$.
\end{itemize}
Then, there exists a constant $\kappa_1>0$ 
such that
\begin{equation}
\max\bigg\{\frac{1}{n-1}\sum_{k=2}^{n}|r_{a,k}|^\rho ,\, \frac{1}{n-1}\sum_{k=2}^{n}|r_{\sig,k}|^\rho \bigg\} = O_P(h_n^{\kappa_1}).
\nonumber
\end{equation}
\end{lem}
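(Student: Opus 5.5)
Since $\rho\le1$, the map $x\mapsto|x|^\rho$ is subadditive. Each remainder therefore splits into its two integrals over the intervals $(t_{k-1,n},t_{k,n}]$ and $(t_{k-2,n},t_{k-1,n}]$, and it suffices to bound $\frac1{n-1}\sum_k$ of $\rho$-th moments of terms of the form
\[
h_n^{-1/\al}\int_j (a_s-a_{k-2})\,ds,\qquad h_n^{-1/\al}\int_j(\sig_{s-}-\sig_{k-2})\,dZ_s,\qquad j\in\{k-1,k\}.
\]
I will prove an $L^\rho$ bound $\E[\,|\cdot|^\rho\,]\lesssim h_n^{\kappa_1}$ uniformly in $k$, after localization when $T_n\equiv T$. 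Then Markov's inequality gives $O_P(h_n^{\kappa_1})$ for the averages. In the finite-horizon case I localize by the stopping time $\tau_R=\inf\{t:|X_t|>R\}$. On $\{\tau_R\ge T\}$ the drift is bounded and $\P(\tau_R<T)\to0$, so it is enough to bound the stopped quantities. In the ergodic case I use $\sup_s\E|X_s|^q<\infty$ together with $\rho<q$.

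\emph{Drift remainder.} By the finite-range H\"older condition \eqref{ass1} combined with linear growth, for $s$ within $2h_n$ of $t_{k-2,n}$ we have $|a_s-a_{k-2}|\lesssim |X_s-X_{k-2}|^\eta\wedge 1+|X_s-X_{k-2}|$. The key input is the small-time increment bound $\E[|X_s-X_{t_{k-2,n}}|^{\rho'}\wedge1]\lesssim h_n^{\rho'/\al}$ for $\rho'<\al$, roughly $h_n^{\rho'(1/\al\wedge1)}$. This follows from the SDE: the jump part dominates at scale $h^{1/\al}$ and the drift contributes $O(h)$. Applying Jensen or H\"older to $\big(h_n^{-1/\al}\int_j|a_s-a_{k-2}|ds\big)^\rho$, I will get $h_n^{\rho(1-1/\al+\eta\min\{1/\al,1\})}$ up to contributions from large jumps, which are handled via the moment assumption with exponent $\rho<q$. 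The exponent is positive exactly when $\al+\min\{\al,1\}\eta>1$, which is \eqref{hm:lem_1-8}. This yields a $\kappa_1$ for the drift remainder.

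\emph{Scale remainder.} I split $Z=Z^{(\al)}+Z^{\triangle}$ into the symmetric stable part and the nuisance part, and further split the jumps at size $1$. For a stochastic integral $\int_j H_{s-}dZ_s$ with predictable $H$ and $\rho<\al$, I will use the standard $L^\rho$ inequality for integrals against locally stable L\'evy processes (via time change or BDG applied to the small-jump compensated part, and subadditivity for big jumps). This gives $\E|\int_jH\,dZ|^\rho\lesssim \E\big[(\int_j|H_s|^{\al'}ds)^{\rho/\al'}\big]$ for some $\al'$ slightly above $\al$, with the nuisance part absorbed using \eqref{BG} and $\beta<\al$. Taking $H_s=\sig_{s-}-\sig_{k-2}$ and using \eqref{ass2} with boundedness of $\sig$, I get $|H_s|\lesssim |X_s-X_{k-2}|^\zeta\wedge1$. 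The resulting bound is $h_n^{\rho/\al}\cdot h_n^{\rho\zeta/\al}$ up to $\varepsilon$-losses, and after the factor $h_n^{-\rho/\al}$ this leaves $h_n^{\rho\zeta/\al-\epsilon}$.

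The main obstacle is this last step. It needs a clean $L^\rho$ bound for stochastic integrals against a non-symmetric, locally stable $Z$ with $\rho<1$, uniformly in $k$, while the integrand is itself a random increment of $X$. I would first treat the small-jump part with $p$-th moment BDG for some $p\in(\al,2]$, conditionally on $\Ff_{k-2,n}$, and then interpolate down to $\rho$. The big jumps contribute at most $O(h_n)$ in probability per interval, and the ergodic case uses $\rho<q$.
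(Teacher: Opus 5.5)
Your outline follows the paper's route: the subadditive split into the two integrals, H\"older bounds for $a$ and $\sig$, the decomposition $Z=Z^{(\al)}+Z^{\triangle}$ with a small/large-jump split, and positivity of the exponent exactly under \eqref{hm:lem_1-8}. For the scale remainder, which the paper omits, your BDG-type bound with exponent $\al'>\al$ is an acceptable substitute for the Luschgy--Pag\`es estimate of Lemma~\ref{hm:lem_moment.ineq}, because there the H\"older gain only has to survive an $h_n^{\epsilon}$ loss.

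The step that fails as written is the drift remainder when $T_n\to\infty$.
\begin{itemize}
\item Because $x\mapsto x^\rho$ is concave, a fixed-time (non-maximal) increment bound can only enter through Jensen on the expectation. So you need $\E[|X_s-X_{t_{k-2,n}}|^{\eta}\wedge 1]\lesssim h_n^{\eta\min\{1,1/\al\}}$, with moment order $\eta$ rather than $\eta\rho$.
\item Your justification that ``the drift contributes $O(h)$'' holds only after localization. In the ergodic case the drift contributes $\int_{t_{k-2,n}}^s a_u\,du$, of size $h_n(1+\sup|X|)$, and you only control $\sup_s\E|X_s|^q$, possibly with $q<\eta$.
\item In that case $\E[(h_n(1+|X|))^{\eta}\wedge 1]$ is in general only of order $h_n^{q}$. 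An example is $a(x)=-x$ (so $\kappa=1$ in Assumption~\ref{A_diss}) with a nuisance part $\nu$ whose tail has index about $q$. Hence your key input is false uniformly in $k$ once $\rho'>q$.
\item Jensen then gives the exponent $\rho\,(1-1/\al+\min\{\eta\min\{1,1/\al\},q\})$. This is negative if $\al<1$ and $q<1/\al-1$.
\item That configuration is allowed by the assumptions, and the lemma still holds there: for $a(x)=-x$ the drift part of $r_{a,k}$ is about $h_n^{2-1/\al}X_{t_{k-2,n}}$.
\item Your large-jump caveat does not cover this. The culprit is the heavy tail of $X$ entering through the drift, not the jumps of $Z$.
\end{itemize}

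The remedy is exactly the paper's split into $\bar U'_{a,n}$ and $\bar U''_{a,n}$. Insert $X_s-X_{t_{k-2,n}}=\int_{t_{k-2,n}}^s\sig_{u-}dZ_u+\int_{t_{k-2,n}}^s a_u\,du$ before taking any expectation, and use $(x+y)^{\eta'}\le x^{\eta'}+y^{\eta'}$. Then bound the drift piece pathwise, keeping the $\rho$-th power outside:
\[
\Big(h_n^{-1/\al}\int_k\Big|\int_{t_{k-2,n}}^s a_u\,du\Big|^{\eta'}ds\Big)^{\rho}\lesssim h_n^{\rho(1-1/\al+\eta')}\Big(1+\sup_{[t_{k-2,n},t_{k,n}]}|X|\Big)^{\eta'\rho}.
\]
Together with \eqref{hm:lem_1-5}, this needs only moments of order $\eta'\rho<q$. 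Jensen is then applied only to the compensated small-jump stable integral, which has all moments and involves no moments of $X$. The finite-variation jump parts are handled pathwise by subadditivity. Conditioning on $\mcf_{t_{k-2,n}}$ before applying Jensen would also work. In the finite-horizon case your argument is fine as it stands, since the drift is bounded after localization.

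Two minor points. For $\al<1$ the drift dominates the increments, so the first form $h_n^{\rho'/\al}$ of your increment bound is wrong; only the ``roughly'' version is right. Correspondingly, the scale remainder is of order $h_n^{\rho\zeta-\epsilon}$ rather than $h_n^{\rho\zeta/\al-\epsilon}$. Only positivity of the exponent matters there, so this is harmless.
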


\begin{proof}
Recalling the definitions of $r_{a,k}$ and $r_{\sig,k}$, we will only show that
\begin{align}
\bar{U}_{a,n}(\rho) &:= \frac{1}{n-1}\sum_{k=2}^{n}
\left| h_n^{-1/\al} \int_{k} (a_s - a_{k-2})ds \right|^\rho 
= O_P(h_n^{\kappa_1}),
\label{hm:lem_1-1}\\
\bar{U}_{\sig,n}(\rho) &:= \frac{1}{n-1}\sum_{k=2}^{n}
\left| h_n^{-1/\al} \int_{k} (\sig_{s-} - \sig_{k-2})dZ_s \right|^\rho 
= O_P(h_n^{\kappa_1})
\label{hm:lem_1-2}
\end{align}
hold for some $\kappa_1>0$; the other two terms (associated with $\int_{k-1}$) can be handled similarly.
We note that
\begin{align}
\sup_\theta |a(\theta; x)-a(\theta; y)| &\leq C\left(|x-y|^\eta 1_{|x-y|\leq 1} +|x-y| 1_{|x-y|> 1}\right),
\label{hm:a_reg} \\
|\sig(x)-\sig(y)| &\leq C\left(|x-y|^\zeta 1_{|x-y|\leq 1} +|x-y| 1_{|x-y|> 1}\right),
\label{hm:s_reg} 
\end{align}
under the finite range H\"older conditions for the coefficients ($\eta,\zeta\in(0,1]$) in Assumption \ref{Ass1}. In particular, 
\begin{equation}
    \sup_\theta |a(\theta; x)| \le C (1+|x|).
\end{equation}

Let us look at \eqref{hm:lem_1-1}.
We have $|a_s - a_{k-2}| \lesssim |X_s - X_{t_{k-2,n}}| + |X_s - X_{t_{k-2,n}}|^\eta$ by \eqref{hm:a_reg}, hence it suffices to show that
\begin{align}
\frac{1}{n-1}\sum_{k=2}^{n} 
\left( h_n^{-1/\al} \int_{k} |X_s - X_{t_{k-2,n}}|^{\eta'} ds \right)^\rho = O_P(h_n^{\kappa_1})
\label{hm:lem_1-3}
\end{align}
for $\eta'\in\{\eta,1\}$. 
Inserting the expression
\begin{equation}\label{hm:index-p+1}
    X_s-X_{t_{k-2,n}} = \int_{t_{k-2,n}}^s \sig_{u-}dZ_u + \int_{t_{k-2,n}}^s a_u du,
\end{equation}
we can bound the left-hand side of \eqref{hm:lem_1-3} by a constant multiple of
\begin{equation}\label{hm:lem_1-4}
    \bar{U}'_{a,n}(\rho) \, h_n^{(1-1/\al+\eta'/\al)\rho} 
    + \bar{U}''_{a,n}(\rho) \, h_n^{(1-1/\al+\eta')\rho},
\end{equation}
where
\begin{align}
\bar{U}'_{a,n}(\rho) &:= 
\frac{1}{n-1}\sum_{k=2}^{n}
\left( \frac{1}{h_n} \int_k \left|h_n^{-1/\al} \int_{t_{k-2,n}}^s \sig_{u-}dZ_u\right|^{\eta'}ds \right)^\rho,
\nn\\
\bar{U}''_{a,n}(\rho) &:= 
\frac{1}{n-1}\sum_{k=2}^{n}
\left\{\frac{1}{h_n} \int_k \left(\frac{1}{h_n} \int_{t_{k-2,n}}^s(1+|X_u|)du\right)^{\eta'}ds \right\}^\rho.
\nn
\end{align}
Since $1-1/\al+\eta'/\al>0$, to complete the proof it suffices to show that $\bar{U}'_{a,n}(\rho) \, h_n^{(1-1/\al+\eta'/\al)\rho} = O_P(h_n^{\kappa_1})$ for some $\kappa_1>0$ and that $\bar{U}''_{a,n}(\rho) = O_P(1)$. Then, the quantity \eqref{hm:lem_1-4} becomes $O_P(h_n^{\kappa_1})$ for a smaller $\kappa_1>0$ if necessary, hence followed by \eqref{hm:lem_1-3}.

For $\bar{U}'_{a,n}(\rho)$, we will use the decomposition $Z=Z^{(\al)}+Z^{\triangle}$ (see \eqref{hm:Z-decomposition} in Section \ref{hm:sec_moments}). 
First, since $Z^{\triangle}_t=\sum_{0<s\le t}\Del Z^{\triangle}_s$ and $\eta',\rho\le 1$, we have
\begin{align}
& \frac{1}{n-1}\sum_{k=2}^{n}
\left(\frac{1}{h_n} \int_k \left|h_n^{-1/\al} \int_{t_{k-2,n}}^s \sig_{u-}dZ^\triangle_u\right|^{\eta'}ds \right)^\rho
\nn\\
&\lesssim 
\frac{1}{n-1}\sum_{k=2}^{n}h_n^{-\eta'\rho/\al}\sum_{t_{k-2,n}<s\le t_{k,n}}|\Del Z^{\triangle}_s|^{\rho\eta'}
\nn\\
&=
\frac{1}{n-1}\sum_{k=2}^{n}h_n^{-\eta'\rho/\al}
\int_{t_{k-2,n}}^{t_{k,n}} |z|^{\rho\eta'}\int N^{\Del}(ds,dz)=O_P(h^{1-\eta'\rho/\al}).
\label{hm:lem_1-6}
\end{align}
Write $N_\al$ for the Poisson point measures associated with the {\lm} $\mu_\al$, and then denote
\begin{align}
    Z^{(\al)}_t 
    &= \int_0^t \int_{|z|\le 1}z \widetilde{N}_{\al}(ds,dz) + \int_0^t \int_{|z|>1}z N_{\al}(ds,dz)
    \nn\\
    &=: \widetilde{Z}^{(\al),s}_t + Z^{(\al),l}_t.
    \label{hm:lem_1-6.5}
\end{align}
For the large-jump part, we have a similar estimate of \eqref{hm:lem_1-6} with $Z^{\triangle}$ therein replaced by $Z^{(\al),l}$. As for the compensated small-jump (locally $\al$-stable) part, we can 
bound the moment as follows:
\begin{align}
& \E\left[\frac{1}{n-1}\sum_{k=2}^{n}
\left(\frac{1}{h_n} \int_k \left|h_n^{-1/\al} \int_{t_{k-2,n}}^s \sig_{u-} d\widetilde{Z}^{(\al),s}_u \right|^{\eta'}ds \right)^\rho \right]
\nn\\
& \lesssim 
\frac{1}{n-1}\sum_{k=2}^{n} 
\left(\frac{1}{h_n} \int_k \E\left[\left|h_n^{-1/\al} \int_{t_{k-2,n}}^s \sig_{u-} d\widetilde{Z}^{(\al),s}_u \right|^{\eta'} \right] ds \right)^\rho
\nn\\
& \lesssim 
\frac{1}{n-1}\sum_{k=2}^{n} 
\left( \E\left[\E\left[ \sup_{v\in[t_{k-2,n},t_j]}\left|h_n^{-1/\al} \int_{t_{k-2,n}}^v \sig_{u-} d\widetilde{Z}^{(\al),s}_u \right|^{\eta'} \,\middle| \, \mcf_{t_{k-2,n}} \right] \right]
\right)^\rho
\nn\\
&\lesssim h_n^{(1-\eta'/\al)\rho}I(\al<\eta') + (\log(1/h_n))^\rho I(\al=\eta') + I(\al>\eta'),
\label{hm:lem_1-7}
\end{align}
where, in the first and last steps, we used the concavity of $y\mapsto y^\rho$ ($y\ge 0$) and Lemma \ref{hm:lem_moment.ineq} below, respectively.
By \eqref{hm:lem_1-6} and \eqref{hm:lem_1-7}, we conclude that 
\begin{equation}\nn
    \bar{U}'_{a,n}(\rho)\lesssim O_P(h^{1-\eta'\rho/\al}) + O_P(h_n^{(1-\eta'/\al)\rho}) I(\al<\eta') + O_P\left(\log(1/h_n)^\rho\right) I(\al\ge \eta'),
\end{equation}
followed by
\begin{align}
\bar{U}'_{a,n}(\rho) \, h_n^{(1-1/\al+\eta'/\al)\rho} 
&\lesssim 
O_P(h_n^{(1-1/\al)\rho+1}) + O_P(h_n^{(2-1/\al)\rho}) \nn\\
&{}\qquad + O_P\left(h_n^{(1-1/\al+\eta'/\al)\rho}\log(1/h_n)^\rho\right).
\end{align}
It is easy to see that the right-hand side equals $O_P(h_n^{\kappa_1})$ under \eqref{hm:al>1/2} and \eqref{hm:lem_1-8}.
Note that, under the essential boundedness of $\sig$, the above argument is valid for both $T_n\to\infty$ and $T_n\equiv T$.

Turning to $\bar{U}''_{a,n}(\rho)$, we first note that
\begin{equation}\nn
    \bar{U}''_{a,n}(\rho) \lesssim 
    \frac{1}{n-1}\sum_{k=2}^{n}
    \left(1+\sup_{s\in[t_{k-2,n},t_{k,n}]}|X_s|^{\eta' \rho}\right).
\end{equation}
Again by substituting the expression \eqref{hm:index-p+1} into $X_s$,
\begin{align}
    \sup_{s\in[t_{k-2,n},t_{k,n}]}|X_s|^{\eta' \rho}
    &\lesssim |X_{t_{k-2,n}}|^{\eta' \rho} + h_n^{\eta' \rho}
    \left(1+\sup_{s\in[t_{k-2,n},t_{k,n}]}|X_s|^{\eta' \rho}\right)
    \nn\\
    &{}\qquad +\sup_{s\in[t_{k-2,n},t_{k,n}]}\left|
    \int_{t_{k-2,n}}^{s}\sig_{u-}dZ_u
    \right|^{\eta' \rho}.
    \nn
\end{align}
This implies that for $h_n$ small enough,
\begin{align}
    \sup_{s\in[t_{k-2,n},t_{k,n}]}|X_s|^{\eta' \rho}
    &\lesssim |X_{t_{k-2,n}}|^{\eta' \rho} + h_n^{\eta' \rho}
    +\sup_{s\in[t_{k-2,n},t_{k,n}]}\left|
    \int_{t_{k-2,n}}^{s}\sig_{u-}dZ_u
    \right|^{\eta' \rho}.
    \nn
\end{align}
Taking the conditional expectation and applying Lemma \ref{hm:lem_moment.ineq} below together with recalling the decomposition $Z=Z^{(\al)}+Z^{\triangle}$ and the argument \eqref{hm:lem_1-6}, we have, with localization through $\tau_n=\inf\{t\ge 0:\, |X_t|\ge n\}$ if necessary,
\begin{align}\label{hm:lem_1-5}
    \E\left[\sup_{s\in[t_{k-2,n},t_{k,n}]}|X_s|^{\eta' \rho}
    \,\middle|\, \mcf_{t_{k-2,n}} \right]
    &\lesssim |X_{t_{k-2,n}}|^{\eta' \rho} + h_n^{\eta' \rho} + h_n^{\eta' \rho/\al} \lesssim |X_{t_{k-2,n}}|^{\eta' \rho}+1.
\end{align}
We conclude that $\bar{U}''_{a,n}(\rho)=O_P(1)$ for both $T_n\to\infty$ and $T_n\equiv T$, by observing the following.
\begin{itemize}
    \item For $T_n\equiv T$, without loss of generality, we may and do suppose that $\sup_{t \le T}\E[|X_t|^K]<\infty$ for every $K>0$.
    \footnote{
Since \eqref{hm:def_aes} is a weak property, to deduce it we may restrict our attention to any convenient event whose probability we can control to be as large as possible, known as the localization procedure; see \cite[Section 4.4.1]{JacPro12} for a detailed account. 
In our case, this essentially amounts to removing large jumps of $Z$. A concise explanation can be found in \cite[Section 6.1]{Mas19spa}.
}
    Hence by \eqref{hm:lem_1-5}, we get $\E[\bar{U}''_{a,n}(\rho)]=O(1)$.

    \item For $T_n\to\infty$, by \eqref{hm:lem_1-5} we directly get
    \begin{equation}
        \E[\bar{U}''_{a,n}(\rho)] \lesssim 
    1+\sup_{s\ge 0}\E\big[|X_s|^{\eta' \rho}\big] \lesssim 
    1+\sup_{s\ge 0}\E\big[|X_s|^{q}\big]< \infty.
    \end{equation}
\end{itemize}
The proof of \eqref{hm:lem_1-1} is thus complete.

We can deduce \eqref{hm:lem_1-2} with the same technicalities as in handling $\bar{U}'_{a,n}(\rho)$, hence we omit the details.
\end{proof}

For $\widetilde{Z}^{(\al),s}$ given in \eqref{hm:lem_1-6.5}, we have the following estimate.

\begin{lem}
\label{hm:lem_moment.ineq}
Suppose that $\xi=(\xi_t)_{t\ge 0}$ is $(\mcf_t)$-adaptive and essentially bounded.
For $r\in(0,1)$ and $t\ge 0$, we have (a.s.)
\begin{equation}
    \E\left[\sup_{v\in[t,t+h]}\left|\int_t^{v} \xi_{u-}d\widetilde{Z}^{(\al),s}_u\right|^r \,\middle|\,\mcf_t\right]
    \le C
    \left\{
    \begin{array}{cl}
      h & (\al<r) \\
      h \log(1/h) & (\al=r) \\
      h^{r/\al} & (\al>r)
    \end{array}
    \right.,
\end{equation}
where the constant $C$ does not depend on $h>0$.
\end{lem}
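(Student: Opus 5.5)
We reduce to $t=0$ via the (conditional) stationarity and independence of increments of the Lévy process: given $\mcf_t$, the process $(\widetilde{Z}^{(\al),s}_{t+s}-\widetilde{Z}^{(\al),s}_t)_{s\ge0}$ is again a compensated small-jump $\al$-stable Lévy process independent of $\mcf_t$. The integrand $\xi$ stays adapted and bounded by a constant $K$. Set $M_v:=\int_t^v\xi_{u-}d\widetilde{Z}^{(\al),s}_u$. This is a purely discontinuous local martingale with jumps $\xi_{u-}\Del Z_u$, each bounded by $K|\Del Z_u|\le K$.

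The key device is a truncation at the natural scale $\epsilon:=h^{1/\al}$ (for $h<1$; for $h\ge1$ the claim is trivial). We split
\begin{equation}
M_v=\int_t^v\!\!\int_{|z|\le\epsilon}\xi_{u-}z\,\widetilde N_\al(du,dz)+\int_t^v\!\!\int_{\epsilon<|z|\le1}\xi_{u-}z\,\widetilde N_\al(du,dz)=:M^{1}_v+M^{2}_v .
\end{equation}
By symmetry of $\mu_\al$, the compensator of the second part vanishes. Hence $M^2_v=\int\!\!\int_{\epsilon<|z|\le 1}\xi_{u-}z\,N_\al(du,dz)$, a finite sum of jumps.

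For $M^1$ we use the conditional Doob/BDG $L^2$ bound together with Jensen, since $r<2$:
\begin{equation}
\E\Big[\sup_{v\le t+h}|M^1_v|^r\Big|\mcf_t\Big]\le \Big(C K^2 h\int_{|z|\le\epsilon}z^2\mu_\al(dz)\Big)^{r/2}\le C(h\,\epsilon^{2-\al})^{r/2}=Ch^{r/\al}.
\end{equation}

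For $M^2$ we use $r<1$ and subadditivity, which gives $\sup_v|M^2_v|^r\le K^r\sum_{t<u\le t+h}|\Del Z_u|^r1_{\epsilon<|\Del Z_u|\le1}$. Its conditional expectation is $h\int_{\epsilon<|z|\le1}|z|^r\mu_\al(dz)\asymp h\int_\epsilon^1 z^{r-\al-1}dz$. This is $\lesssim h$ when $r>\al$, $\lesssim h\log(1/h)$ when $r=\al$, and $\lesssim h\,\epsilon^{r-\al}=h^{r/\al}$ when $r<\al$.

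Combining the two pieces via $|a+b|^r\le|a|^r+|b|^r$, we get the bound $h^{r/\al}+h\Psi(h)$ with $\Psi$ equal to the displayed integral. When $\al<r$ we have $r/\al>1$, so $h^{r/\al}\le h$. When $\al=r$ we have $h^{r/\al}=h$. When $\al>r$ both terms are of order $h^{r/\al}$. This yields all three cases of the lemma.

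The only delicate point is the case $\al<r$. There the naive $L^2$ bound over all jumps gives only $h^{r/2}$, and the truncation at $h^{1/\al}$ is exactly what is needed. We also need to justify the conditional BDG inequality with a predictable integrand. This follows by applying the BDG/Doob inequality to $M^1$ conditionally on $\mcf_t$, since $\E[[M^1]_{t+h}-[M^1]_t|\mcf_t]\le K^2h\int_{|z|\le\epsilon}z^2\mu_\al(dz)$.
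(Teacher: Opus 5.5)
Your proof is correct and follows essentially the Luschgy--Pag\`es approach that the paper defers to, written out for the stochastic integral. You truncate the jumps at the natural scale $h^{1/\al}$, control the small jumps by a conditional Doob/Burkholder $L^2$ bound plus Jensen, and control the remaining jumps by $r$-subadditivity, after removing their compensator via the symmetry of $\mu_\al$. This treats all three regimes with one argument, where the paper cites three separate theorems.

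The one imprecision is at the boundary. For $\al=r$, both the lemma and your absorption of $h^{r/\al}=h$ into $h\log(1/h)$ require $h\le h_0<1$ (e.g.\ $h\le e^{-1}$). So your remark that the case $h\ge 1$ is trivial holds only for $\al\neq r$.
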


\begin{proof}
We can adapt the proofs of \cite{LusPag08} without essential change: the paper dealt with L\'{e}vy processes, but their proof is based on the Burkholder inequality, which can be valid for general stochastic integrals with respect to a L\'{e}vy process and a predictable integrand. 
Specifically, to get the claims, we can modify the proofs of Theorem 1, Theorem 3, and Theorem 2 of \cite{LusPag08} for $\al<r$, $\al=r$, and $\al>r$, respectively.
\end{proof}

\medskip

We proceed with further estimates to deduce the asymptotic behavior of $\mathsf{H}_{1,n}(\rho)$ given by \eqref{hm:def_PV}, with estimating the convergence rate. By Lemma \ref{hm:lem_1} and \eqref{hm:inc.sum}, we have
\begin{align}\label{hm:lem_2-1}
\left| \mathsf{H}_{1,n}(\rho) - \frac{1}{n-1}\sum_{k=2}^{n} \sig_{k-2}^\rho \big|(2h_n)^{-1/\al}\Delta_{k}^2 Z\big|^{\rho} \right|
&\le \frac{1}{n-1}\sum_{k=2}^{n}|\overline{r}_k|^\rho = O_P(h_n^{\kappa_1}).
\end{align}
Denote by $\mathfrak{m}_\al(\rho)$, $\rho\in(0,\al)$, the $q$th absolute moment of $S_{\al}$ (see \cite[Example 25.10]{Sat99}):
\begin{equation}\label{hm:def_m.al}
\mathfrak{m}_\al(\rho) := \E\big[|Z^{(\al)}_1|^\rho\big] = \frac{2^q \,\Gamma((\rho+1)/2)}{\sqrt{\pi}\,\Gamma(1-\rho/2)}\, \Gamma(1-\rho/\al).
\end{equation}
By the independence of the increments and the triangular inequality,
\begin{align}
& \left|\frac{1}{n-1}\sum_{k=2}^{n} \sig_{k-2}^\rho \big|(2h_n)^{-1/\al}\Delta_{k}^2 Z\big|^{\rho} 
- \mathfrak{m}_\al(\rho) \frac{1}{n-1}\sum_{k=2}^{n} \sig_{k-2}^\rho \right|
\nn\\
&\le \frac{1}{\sqrt{n-1}} \left|\frac{1}{\sqrt{n-1}}\sum_{k=2}^{n} \sig_{k-2}^\rho \left(\big|(2h_n)^{-1/\al}\Delta_{k}^2 Z\big|^{\rho} 
- \E\left[\big|(2h_n)^{-1/\al}\Delta_{k}^2 Z\big|^{\rho}\right]\right)\right|
\nn\\
&{}\qquad 
+ \left|\E\left[\big|(2h_n)^{-1/\al}\Delta_{2}^2 Z\big|^{\rho}\right] - \mathfrak{m}_\al(\rho) \right| \frac{1}{n-1}\sum_{k=2}^{n} \sig_{k-2}^\rho
=: \delta'_{n} + \delta''_{n}.
\label{hm:lem_2-2}
\end{align}

In what follows, we will assume that
\begin{equation}\label{hm:alpha.esti.rho}
    \rho\in\left\{
    \begin{array}{ll}
        \ds{\left(0,\, 1_{\{\beta=0\}}+1_{\{\beta>0\}}\beta \right)}, & T_n\equiv T, \\[2mm]
        \ds{\left(0,\, \left( 1_{\{\beta=0\}}+1_{\{\beta>0\}}\beta \right) \wedge \frac{q}{2} \right)}, & T_n\to\infty.
    \end{array}
    \right.
\end{equation}

\begin{lem}
\label{hm:lem_2}
Under \eqref{hm:alpha.esti.rho},
\begin{enumerate}
    \item $\delta'_{n}=O_P(n^{-1/2})$, and
    \item $\delta''_{n}=O_P(h^{\kappa_2})$ for some $\kappa_2>0$.
\end{enumerate}
These are valid for both $T_n\equiv T$ and $T_n\to\infty$.
\end{lem}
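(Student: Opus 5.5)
For part 1, I will write $\delta'_n = (n-1)^{-1/2}|M_n|$, where
\[
M_n=(n-1)^{-1/2}\sum_{k=2}^n \sig_{k-2}^\rho \xi_k,
\qquad
\xi_k:=\big|(2h_n)^{-1/\al}\Delta_k^2 Z\big|^\rho-\E\big[|(2h_n)^{-1/\al}\Delta_k^2 Z|^\rho\big],
\]
and show $\E[M_n^2]=O(1)$. The variables $\xi_k$ are centered and identically distributed. Since $\Delta_k^2 Z$ involves the increments of $Z$ over $(t_{k-2,n},t_{k,n}]$, the sequence $(\xi_k)$ is one-dependent: $\xi_k$ is independent of $\xi_j$ whenever $|k-j|\ge 2$.

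Moreover, $\sig_{k-2}$ is $\mcf_{t_{k-2,n}}$-measurable and therefore independent of $\xi_k$. I will split the sum into even and odd indices $k$. Within each parity class, $\sig_{k-2}^\rho\xi_k$ is a martingale difference sequence with respect to the filtration $(\mcf_{t_{k,n}})$ along that class. Indeed, for $k'=k+2$, the variable $\xi_{k'}$ depends only on increments after $t_{k,n}$ and is independent of $\mcf_{t_{k,n}}$.

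By orthogonality, and since $\sig$ is bounded,
\[
\E[M_n^2]\lesssim \frac{1}{n-1}\sum_{k}\E\big[\sig_{k-2}^{2\rho}\big]\,\E[\xi_k^2]\lesssim \E\big[|(2h_n)^{-1/\al}\Delta_2^2 Z|^{2\rho}\big].
\]
It therefore remains to show that $\sup_n \E[|h_n^{-1/\al}Z_{2h_n}|^{2\rho}]<\infty$. This is where the restriction \eqref{hm:alpha.esti.rho} enters.

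To prove this moment bound, I will use the decomposition $Z=Z^{(\al)}+Z^\triangle$ from Section \ref{hm:sec_moments}, with $2\rho\le 1$ so that $|x+y|^{2\rho}\le |x|^{2\rho}+|y|^{2\rho}$. For the stable part, $2\rho<\al$ gives scaling moments of order $O(1)$. I will handle this part via Lemma \ref{hm:lem_moment.ineq} for the small-jump piece, while for the large jumps the moment is $O(h)$ after rescaling whenever $2\rho<\al$.

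For the nuisance part, I will bound
\[
\E\Big[\sum_{s\le 2h}|\Delta Z^\triangle_s|^{2\rho}\Big]=2h\int |u|^{2\rho}|\nu|(du).
\]
Near zero this integral is finite when $2\rho>\beta$. When $2\rho\le\beta$, I will instead split $|\nu|$ at the level $h^{1/\al}$ and use \eqref{BG}. In the case $T_n\to\infty$, the tail at infinity is finite because $2\rho<q$. After multiplying by $h^{-2\rho/\al}$, this yields the bound $h^{1-2\rho/\al}\cdot(\text{polylog})$, which tends to zero since $2\rho<\al$. I expect this bookkeeping for $\nu$, which is a signed measure, to be the only nuisance. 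I will handle it by using $|\nu|$ throughout: $Z^\triangle$ is then dominated pathwise by sums of absolute jumps, plus its drift $b$ from \eqref{pur_jump}, and the drift contributes $O(h^{1-1/\al})$ after scaling, which is raised to the power $\rho$.

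For part 2, I first note that $\frac{1}{n-1}\sum_k\sig_{k-2}^\rho$ is bounded, so it suffices to show
\[
\big|\E[|(2h_n)^{-1/\al}\Delta_2^2Z|^\rho]-\mathfrak m_\al(\rho)\big|=O(h^{\kappa_2}).
\]
By stationarity and symmetry of increments, $(2h)^{-1/\al}\Delta_2^2 Z$ has the law of $(2h)^{-1/\al}Z'_{2h}$, where $Z'$ is a Lévy process with Lévy measure $\mu(du)+\mu(-du)$ restricted appropriately. Its $\al$-stable component is exactly $S_\al$-distributed with scale $1$, since the principal part is symmetric: the difference of two independent copies of $Z^{(\al)}_h$ is $(2h)^{1/\al}S_\al$ in law.

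Hence, by $|x+y|^\rho-|x|^\rho\le|y|^\rho$, the difference is bounded by $\E[|(2h)^{-1/\al}(\text{nuisance increment})|^\rho]$. The same computation as in part 1, now with exponent $\rho$ in place of $2\rho$, gives a bound of order $h^{\min(1-\rho/\al,\ (\al-\beta)/\al\cdot c)}$ up to logarithmic factors. This is a positive power, which yields $\kappa_2$.

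The main obstacle in the whole argument is the moment control of the nuisance part uniformly in $h$. Everything else is routine independence and martingale-difference calculations.
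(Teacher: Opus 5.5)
Your argument follows the paper's proof. Part 1 is an $L^2$ martingale (Burkholder) bound using the boundedness of $\sig$. Part 2 uses the coupling $Z=Z^{(\al)}+Z^\triangle$, the identity $(2h_n)^{-1/\al}\Delta_2^2Z^{(\al)}\stackrel{d}{=}Z^{(\al)}_1$, the inequality $\bigl||x+y|^\rho-|x|^\rho\bigr|\le|y|^\rho$, and the small-time bounds of Lemma~\ref{hm:lem_moment-1}; your splitting of $|\nu|$ at level $h^{1/\al}$ is an equivalent substitute for the Lyapunov step there. Your even/odd splitting makes explicit something the paper's one-line proof skips: $\sig_{k-2}^\rho\xi_k$ is not a martingale difference for $(\mcf_{t_{k-1,n}})$.

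\textbf{The gap.} In part 1 you reduce to $\sup_n\E\bigl[|(2h_n)^{-1/\al}\Delta_2^2Z|^{2\rho}\bigr]<\infty$, which needs $2\rho<\al$. You treat this as a consequence of \eqref{hm:alpha.esti.rho}, but that holds only when $\beta>0$ (then $\rho<\beta<\al/2$ by Remark~\ref{r23}). When $\beta=0$, the constraint allows $\rho\in[\al/2,\min\{1,\al\})$. This happens for $T_n\equiv T$, and also for $T_n\to\infty$ if $\nu$ cancels the stable tails so that $q$ is large. In that range your second moment is infinite, or of order $h_n^{1-2\rho/\al}\to\infty$ after truncating large jumps.

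No argument can rescue part 1 in that range. Take $\nu=0$, $\sig\equiv1$, $T_n\equiv T$. The $\xi_k$ are then the first terms of a fixed stationary one-dependent sequence with tails of index $\al/\rho<2$. Hence $n^{-\rho/\al}\sum_k\xi_k$ has a non-degenerate stable limit, and $\delta'_n$ is of order $n^{\rho/\al-1}\gg n^{-1/2}$. The paper's Burkholder argument carries the same hidden restriction.

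There are two ways to fix this:
\begin{itemize}
\item State $\rho<\al/2$ as an extra hypothesis.
\item Since only $O_P(h_n^{\kappa})$ is used in \eqref{hm:al.est+1}, replace the $L^2$ bound by a von Bahr--Esseen bound of order $p\in(1,\al/\rho)$ on each parity class. This gives $\delta'_n=O_P(n^{1/p-1})=O_P(h_n^{1-1/p})$, because $n\gtrsim h_n^{-1}$.
\end{itemize}

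\textbf{Smaller points.}
\begin{itemize}
\item Even with $2\rho<\al$ you may have $2\rho>1$ (take $\al>1$ and $\beta>1/2$). So use $|x+y|^p\le2^{p-1}(|x|^p+|y|^p)$ instead of subadditivity.
\item Treat the stable part by the exact identity in law rather than Lemma~\ref{hm:lem_moment.ineq}, which is stated only for exponents below $1$.
\item $Z^\triangle$ carries no drift: that is exactly what \eqref{pur_jump} gives, and a linear drift would cancel in $\Delta_k^2$ anyway. Drop that term. As you wrote it, a contribution of order $h^{1-1/\al}$ would diverge for $\al<1$, which is allowed in this section.
\item For $T_n\equiv T$ no tail moments are assumed, so you cannot invoke $2\rho<q$. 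You need the localization (removal of large jumps) used elsewhere in the section. This shifts the centring constant by $O(h_n^{1-\rho/\al})$, which is again $O(n^{-1/2})$ exactly when $\rho\le\al/2$.
\end{itemize}
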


\begin{proof}
The first claim easily follows from the Burkholder inequality 
together with the boundedness of $\sig$. 
For the second one, we apply the moment estimate Lemma \ref{hm:lem_moment-1} in Section \ref{hm:sec_moments} together with the representation $Z=Z^{(\al)}+Z^\triangle$, the fact that $(2h_n)^{-1/\al}\Delta_{2}^2 Z^{(\al)}$ has the same distribution as $Z^{(\al)}_1$, and the inequality $||x+\eps|^r - |x|^r|\le |\eps|^r$ valid for any $r\in(0,1]$:
\begin{align}
\left| \E\left[\big|(2h_n)^{-1/\al}\Delta_{2}^2 Z\big|^{\rho}\right] - \mathfrak{m}_\al(\rho) \right|
&\lesssim \E\left[\big|(2h_n)^{-1/\al}\Delta_{2}^2 Z^{\triangle}\big|^{\rho}\right] = O(h_n^{\kappa_2})
\nn
\end{align}
for some $\kappa_2>0$.
\end{proof}

Under the sampling-design condition \eqref{h_step}, the estimates \eqref{hm:lem_2-1} and \eqref{hm:lem_2-2} conclude that
\begin{align}\label{hm:al.est+1}
\left| \mathsf{H}_{1,n}(\rho) - 
\mathfrak{m}_\al(\rho) \frac{1}{n-1}\sum_{k=2}^{n} \sig_{k-2}^\rho \right|
&= O_P(h_n^{\kappa_3}).
\end{align}
for some $\kappa_3>0$.

Now, we introduce the a.s. finite quantity:
\begin{equation}
\mathsf{H}_0(\rho):=
\mathfrak{m}_\al(\rho) \times
\left\{
\begin{array}{ll}
\displaystyle{\frac1T \int_0^T \sig_s^\rho ds} & (T_n\equiv T) \\[3mm]
\displaystyle{\int \sig(x)^\rho\pi(\tz,dx)} & (T_n\to\infty).
\end{array}
\right.
\nonumber
\end{equation}

\begin{lem}
\label{hm:lem_4}
Under \eqref{hm:alpha.esti.rho}, we have for some $\kappa>0$,
\begin{equation}
\mathsf{H}_{1,n}(\rho) = \mathsf{H}_0(\rho) + O_P(h_n^{\kappa}).
\nonumber
\end{equation}
\end{lem}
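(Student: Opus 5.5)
By \eqref{hm:al.est+1}, $\mathsf{H}_{1,n}(\rho)$ is within $O_P(h_n^{\kappa_3})$ of $\mathfrak{m}_\al(\rho)\frac{1}{n-1}\sum_{k=2}^n\sig_{k-2}^\rho$. So it suffices to show that the Riemann-type average
\begin{equation}
A_n:=\frac{1}{n-1}\sum_{k=2}^{n}\sig(X_{t_{k-2,n}})^\rho
\end{equation}
is within $O_P(h_n^{\kappa'})$ of $\frac1T\int_0^T\sig_s^\rho\,ds$ when $T_n\equiv T$, and of $\int\sig^\rho d\pi$ when $T_n\to\infty$, for some $\kappa'>0$. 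Then I take $\kappa=\min\{\kappa_3,\kappa'\}$. Since $\sig$ is bounded, separated from $0$ and $\zeta$-H\"older on finite range, the map $x\mapsto\sig(x)^\rho$ is bounded and satisfies $|\sig(x)^\rho-\sig(y)^\rho|\lesssim |x-y|^{\zeta}\wedge 1$.

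\emph{Finite horizon.} I write
\begin{equation}
\frac1T\int_0^T\sig_s^\rho ds-A_n=\frac{1}{nh_n}\sum_{k}\int_{k}(\sig_s^\rho-\sig_{k-1}^\rho)\,ds+O(n^{-1}).
\end{equation}
The $O(n^{-1})$ term absorbs the index shift and the normalization $(n-1)^{-1}$ versus $n^{-1}$, using boundedness of $\sig$; note $n^{-1}=h_n/T$. For the main term, by the H\"older bound its expectation is at most $C\sup_{s\in[t_{k-1},t_k]}\E[|X_s-X_{t_{k-1}}|^{\zeta'}\wedge1]$ with $\zeta'=\min\{\zeta,\rho'\}$ for a small $\rho'<\al$. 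I bound this exactly as $\bar U'_{a,n}$ was bounded in the proof of Lemma \ref{hm:lem_1}. I use the representation \eqref{hm:index-p+1}, the decomposition $Z=Z^{(\al)}+Z^\triangle$, Lemma \ref{hm:lem_moment.ineq} for the compensated small-jump part (giving $h_n^{\zeta'/\al}$), and the jump-sum bound \eqref{hm:lem_1-6} for the large and nuisance jumps (giving $O(h_n)$). The drift contributes $h_n^{\zeta'}(1+|X|)$, which is controlled after the same localization used for $\bar U''_{a,n}$. This gives $O_P(h_n^{\kappa'})$.

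\emph{Infinite horizon.} Here I split
\begin{equation}
A_n-\int\sig^\rho d\pi=\frac{1}{n-1}\sum_k\big(\sig_{k-2}^\rho-\E\sig_{k-2}^\rho\big)+\frac{1}{n-1}\sum_k\Big(\E\sig_{k-2}^\rho-\int\sig^\rho d\pi\Big).
\end{equation}
For the bias term I use the mixing bound \eqref{hm:exp.mix} with the bounded function $\sig^\rho$. This gives the bound $\frac1n\sum_k(1+t_{k,n})^{-\mathfrak a}\lesssim \frac{1}{T_n}\int_0^{T_n}(1+t)^{-\mathfrak a}dt$, which is a negative power of $T_n$ (up to a logarithm when $\mathfrak a=1$). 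Since $T_n\gtrsim h_n^{-c'}$, this is $O(h_n^{\kappa'})$. For the variance term I compute the second moment using the Markov property. The covariance between times $s$ and $t$ is bounded by $C\int\|P_{|t-s|}(x,\cdot)-\pi\|_{TV}P_s(dx)$ plus a stationarity correction, both of order $(1+|t-s|)^{-\mathfrak a}$. Summing over the grid gives variance $\lesssim \frac1{n^2}\sum_{j,k}(1+h_n|j-k|)^{-\mathfrak a}\lesssim (1/n+T_n^{-\mathfrak a\wedge1})\log$. This is again a power of $h_n$ because $n\ge T_n$.

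The step I expect to be the main obstacle is this covariance estimate. Assumption \eqref{hm:exp.mix} controls mixing only in an integrated form against the initial law $\P^{X_0}$, not uniformly in $x$. So I will have to write
\begin{equation}
\mathrm{Cov}(g(X_s),g(X_t))=\E\big[g(X_s)\big(P_{t-s}g(X_s)-\pi g\big)\big]+\pi g\,\big(\E g(X_s)-\pi g\big)
\end{equation}
and control $\E|P_{t-s}g(X_s)-\pi g|$ through the law of $X_s$. My first attempt is to bound this by $\int\|P_{t-s}(x,\cdot)-\pi\|_{TV}\,(P_s\P^{X_0})(dx)$ and compare $P_s\P^{X_0}$ with $\pi$. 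If that comparison fails, the fallback is a cruder route: split into blocks of length $T_n^{1/2}$ and use only the bias bound together with boundedness of $g$.
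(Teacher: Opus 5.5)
Your finite-horizon argument and your bias estimate for $T_n\to\infty$ follow the paper's proof. One small fix in the finite-horizon part: for the nuisance small jumps, the jump-sum bound needs an exponent above $\beta$. So take $\zeta'\in(\beta,\min\{\zeta,\al,1\})$, which Remark \ref{r23} allows, rather than a small $\zeta'$.

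The gap is in the fluctuation term, exactly where you expected it, and neither of your proposed resolutions closes it. Your covariance identity is also slightly off: its right side equals $\E[g(X_s)g(X_t)]-(\pi g)^2$. The correct second term is $\E[g(X_s)]\,(\pi g-\E[g(X_t)])$, which is $\lesssim(1+t)^{-\mathfrak a}$ by \eqref{hm:exp.mix} and therefore harmless. The first term, however, requires
\[
\sup_{s\ge 0}\int\|P_u(\tz;x,\cdot)-\pi(\tz;\cdot)\|_{TV}\,\P(X_s\in dx)\lesssim (1+u)^{-c},\qquad c>0,
\]
whereas \eqref{hm:exp.mix} gives this only at $s=0$. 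For deterministic $X_0$ it concerns a single starting point. Comparing $\P(X_s\in\cdot)$ with $\pi$ leaves you with the stationary coefficient $\int\|P_u(\tz;x,\cdot)-\pi(\tz;\cdot)\|_{TV}\,\pi(\tz;dx)$, about which \eqref{hm:exp.mix} says nothing. The block fallback fails for the same reason. However you group the pairs, far-apart pairs $(s,t)$ can only be decoupled through $P_{t-s}g(X_s)\approx\pi g$, that is, through convergence started from the random state $X_s$. The unconditional bias bound $\|\P(X_t\in\cdot)-\pi\|_{TV}\to 0$ carries no information about correlations.

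What closes the gap is the pointwise bound \eqref{rate} from Appendix \ref{sB2}, available because Assumption \ref{A_diss} is in force when $T_n\to\infty$. It gives $\|P_u(\tz;x,\cdot)-\pi(\tz;\cdot)\|_{TV}\le U(x)r(u)$, with $U(x)\lesssim 1+|x|^q$ in both regimes (for $\kappa<1$ note $0<q+\kappa-1<q$) and $r$ exponential or polynomial. Combine this with $\sup_s\E[|X_s|^q]<\infty$ from Assumption \ref{hm:asm-1}.2. Then $\E\bigl[|P_ug(X_s)-\pi g|\bigr]\le\|g\|_\infty\, r(u)\sup_s\E[U(X_s)]\lesssim r(u)$ uniformly in $s$. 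Your double-sum computation, with $(1+u)^{-\mathfrak a}$ replaced by $r(u)$, then yields a negative power of $T_n$, hence of $h_n$, since $T_n\gtrsim h_n^{-c'}$.

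For comparison, the paper handles this term by asserting that \eqref{hm:exp.mix} makes $X$ polynomially $\beta$-mixing and then applying Ibragimov's covariance inequality. For a Markov process, the $\beta$-mixing coefficient at lag $u$ is essentially the uniform-in-$s$ quantity displayed above. So the paper relies on the same control without spelling it out, and the argument above supplies it. Your bound $(1/n+T_n^{-(\mathfrak a\wedge 1)})\log T_n$ for the double sum is the right one; the paper's corresponding display drops a factor $h_n^{-1}$, which is harmless given $T_n\gtrsim h_n^{-c'}$.
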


\begin{proof}
First, consider the case $T_n\equiv T$. 
Thanks to \eqref{hm:lem_1-2} and the boundedness of the scale coefficient $\sig$, 
we have
\begin{align}
& \left|\frac{1}{n-1}\sum_{k=2}^{n} \sig_{k-2}^\rho - \frac{1}{T}\int_0^T \sig_s^\rho ds\right|
\nn\\
&= \left|\frac{1}{n-1}\sum_{k=2}^{n} \sig_{k-2}^\rho 
- \frac{1}{n}\sum_{k=1}^{n}\frac{1}{h_n}\int_k \sig_{s}^\rho ds \right|
\nn\\
&\lesssim \frac{1}{n}\sum_{k=1}^{n}\frac{1}{h_n}\int_k |\sig_{s}^\rho - \sig_{k-1}^\rho| ds 
+ O_P\left(\frac1n\right)
\nn\\
&\lesssim 
\frac{1}{n}\sum_{j=1}^{n}\frac{1}{h_n}\int_k \max\left\{|X_s-X_{t_{k-1}}|^\rho,\,
|X_s-X_{t_{j-1}}|^{\zeta \rho} \right\} ds 
+ O_P\left(\frac1n\right).
\label{hm:lem_4-1}
\end{align}
In a quite similar way to prove \eqref{hm:lem_1-3} (with assuming that $\sup_{s\ge 0}\E[|X_s|^{\rho}]<\infty$ without loss of generality), we can deduce that the first term in \eqref{hm:lem_4-1} is $O_P(h_n^{\kappa})$ for some $\kappa>0$. This completes the proof for $T_n\equiv T$. 

Next, we consider the case $T_n\to\infty$. Noting that
\begin{equation}
    \E[\sig_{k-2}^\rho]=\iint \sig(y)^\rho P_{t_{k-2,n}}(\tz; x, dy)\P^{X_0}(dx),
\end{equation}
we will make use of the following estimate:
\begin{equation}
    \left|\frac{1}{n-1}\sum_{k=2}^{n} \sig_{k-2}^\rho - \int \sig(x)^\rho\pi(\tz,dx)\right|
\le \overline{\delta}'_n(\rho) + \overline{\delta}''_n(\rho),
\end{equation}
where
\begin{align}
\overline{\delta}'_n(\rho)&:= 
\left|\frac{1}{n-1}\sum_{k=2}^{n} \iint \sig(y)^\rho \left(
P_{t_{k-2,n}}(\tz; x, dy) -\pi(\tz; dy) \right)\P^{X_0}(dx) \right|,
\nn\\
\overline{\delta}''_n(\rho) &:= 
\left| \frac{1}{n-1}\sum_{k=2}^{n} \left(\sig_{k-2}^\rho - \E[\sig_{k-2}^\rho]\right) \right|.
\nn
\end{align}

For $\overline{\delta}'_n(\rho)$, by Assumption \ref{hm:asm-1}.2 we get
\begin{align}
\overline{\delta}'_n(\rho)
&\lesssim 
\frac{1}{n-1}\sum_{k=2}^{n} \int 
\| P_{t_{k-2,n}}(\tz; x, \cdot) -\pi(\tz; \cdot) \|_{TV} \P^{X_0}(dx) \nn\\
&\lesssim 
\frac{1}{n-1}\sum_{k=2}^{n} (1+t_{k-2,n})^{-\mathfrak{a}} 
= \frac{1}{n-1}\sum_{k=0}^{n-2} \left(1+\frac{k}{n}T_n\right)^{-\mathfrak{a}} 
\nn\\
&\lesssim \frac1n + \sum_{k=1}^{n}\int_{(k-1)/n}^{k/n}(1+u T_n)^{-\mathfrak{a}}du 
\lesssim \frac1n + \frac{1}{T_n} \int_{0}^{1}(1+y)^{-\mathfrak{a}}dy
\nn\\
&\lesssim \frac1n + 
\left\{
\begin{array}{cl}
  T_n^{-\min\{\mathfrak{a},1\}}   & (\mathfrak{a}\ne 1) \\
  T_n^{-1}\log T_n   & (\mathfrak{a}= 1)
\end{array}
\right.
\lesssim h_n^{\kappa_4}
\nn
\end{align}
for a sufficiently small $\kappa_4>0$, where we used \eqref{h_step} in the last step.

We turn to $\overline{\delta}''_n(\rho)$. \eqref{hm:exp.mix} implies that $X$ is polynomially $\beta$-mixing, hence polynomially $\al$-mixing (strong-mixing) as well in the following sense:
\begin{equation}
\al_X(t) := \sup_{s\ge 0}\sup_{A \in \mcf^X_{[0,s]},~ B \in \mcf^X_{[s+t,\infty)}} |\P[A\cap B] - \P[A]\P[B]|
\lesssim (1+t)^{-\mathfrak{a}}.
\nonumber
\end{equation}
Then, Ibragimov's inequality says that (see, for example, \cite{Dou94})
\begin{equation}
\label{hm:Ibragimov.ineq}
    \sup_{s\ge 0}\left|\mathrm{Cov}[\sig_{s},\sig_{s+\tau}]\right|
    \lesssim 
    (1+\tau)^{-\mathfrak{a}}.
\end{equation}
By the boundedness of $\sig$,
\begin{align}
    \Var[\overline{\delta}''_n(\rho)]
    \lesssim \frac{1}{n^2} \sum_{k=1}^{n}\sum_{l=1}^{n} \mathrm{Cov}[\sig_{k-1},\sig_{l-1}] 
    \lesssim 
    \frac{1}{n} +\frac{1}{n^2} \sum_{k\ne l} \mathrm{Cov}[\sig_{k-1},\sig_{l-1}].
\end{align}
By \eqref{hm:Ibragimov.ineq}, the second term on the rightmost side can be bounded by a constant multiple of
\begin{align}
\frac{1}{n^2} \sum_{k=0}^{n-1} \sum_{l=k+1}^{n} (1+(l-k)h_n)^{-\mathfrak{a}}
&= \frac{1}{n^2} \sum_{k=0}^{n-1} \sum_{m=1}^{n-k} (1+m h_n)^{-\mathfrak{a}}
\nn\\
&\lesssim \frac{1}{n^2} \sum_{k=0}^{n-1} \int_0^{(n-k)h_n} (1+y)^{-\mathfrak{a}}dy
\nn\\
&\lesssim \left\{
\begin{array}{cl}
    n^{-1} & (\mathfrak{a}>1) \\[2mm]
    n^{-1}\log T_n & (\mathfrak{a}=1) \\[2mm]
    n^{-1}T_n^{1-\mathfrak{a}} & (\mathfrak{a}<1) \\
\end{array}
\right.
\nn\\
&=O_P(h_n^{2\kappa})
\nn
\end{align}
for some $\kappa\in(0,\kappa_4\wedge (1/2))$. 
Thus, we get $\Var[\overline{\delta}''_n(\rho)]\lesssim h_n^{2\kappa}(1+(nh_n^{2\kappa})^{-1})\lesssim h_n^{2\kappa}$. 
Since the summands of $\overline{\delta}''_n(\rho)$ are zero-mean, it follows that $\overline{\delta}''_n(\rho)=O_P(h_n^\kappa)$.
\end{proof}

\medskip

We now construct our estimator of $\al$ under \eqref{hm:alpha.esti.rho}, in a similar manner to that of \cite[Section 4]{Tod13}. 
To this end, by combining \eqref{hm:alpha.esti.rho} with the conditions on $\rho$ given in Lemma \ref{hm:lem_1}, we assume that
\begin{equation}\label{hm:alpha.esti.rho+1}
    \rho\in\left\{
    \begin{array}{ll}
        \ds{\left(0,\, \{1_{\{\beta=0\}}+1_{\{\beta>0\}}\beta\}\wedge \al \wedge 1 \right)}, & T_n\equiv T, \\[2mm]
        \ds{\left(0,\, \{1_{\{\beta=0\}}+1_{\{\beta>0\}}\beta\}\wedge \al \wedge \frac{q}{2} \wedge 1 \right)}, & T_n\to\infty.
    \end{array}
    \right.
\end{equation}
regardless of whether $T_n\equiv T$ or $T_n\to\infty$.

In the same way as the above lemmas, we can derive the following for the normalized power-variation based on the third-order differences:
\begin{align}
\mathsf{H}_{2,n}(\rho) 
&:= \frac{1}{n-3}\sum_{k=4}^{n}\big|(4h_n)^{-1/\al}(\Delta_k^2 X - \Delta_{k-2}^2 X)\big|^{\rho}
\nn\\
&= \frac{1}{n-3}\sum_{k=4}^{n} \sig_{k-4}^\rho \big|(4h_n)^{-1/\al}(\Delta_{k}^2 Z - \Delta_{k-2}^2 Z)\big|^{\rho} + O_P(h_n^\kappa)
\nn\\
&= \left(\mathfrak{m}_\al(\rho) + o(1) \right) \frac{1}{n-3}\sum_{k=4}^{n} \sig_{j-4}^\rho + O_P(h_n^\kappa)
\nn\\
&= \mathsf{H}_0(\rho) + O_P(h_n^\kappa),
\label{hm:V2_se}
\end{align}
where the exponent $\kappa>0$ can be the same as before, by making it smaller if necessary.
On the one hand, since $\mathsf{H}_0(\rho)>0$, from Lemma \ref{hm:lem_4} and \eqref{hm:V2_se},
\begin{equation}\label{hm:index.esti.eq}
\frac{\mathsf{H}_{2,n}(\rho)}{\mathsf{H}_{1,n}(\rho)} = 1 + O_P(h_n^{\kappa}).
\end{equation}
On the other hand, the left-hand side of the identity
\begin{equation}
\frac{\mathsf{H}_{2,n}(\rho)}{\mathsf{H}_{1,n}(\rho)} 
= 2^{-\rho/\al}
\frac{(n-1)\sum_{k=4}^{n}\big|\Delta_k^2 X - \Delta_{k-2}^2 X\big|^{\rho}}{(n-3)\sum_{k=2}^{n}\big|\Delta_k^2 X\big|^{\rho}}
\nonumber
\end{equation}
is a statistic. By ignoring the $O_P(h_n^\kappa)$ term in \eqref{hm:index.esti.eq}, namely by setting the left-hand side of the previous display equal to $1$, we obtain the following explicit estimator of $\al$:
\begin{equation}
\hat{\al}_n(\rho) \coloneq 
\rho \log 2
\left/
\log\left(\frac{(n-1)\sum_{k=4}^{n}\big|\Delta_k^2 X - \Delta_{k-2}^2 X\big|^{\rho}}{(n-3)\sum_{k=2}^{n}\big|\Delta_k^2 X\big|^{\rho}}
\right.
\right).
\label{hm:def_alpha.est}
\end{equation}
This $\hat{\al}_n(\rho)$ differs slightly from the estimator in \cite[Section 4]{Tod13}, where an \textit{aggregated} version (specifically, the statistics $\widetilde{V}_n^2(p,X)$ therein) was used, in contrast to our \textit{third-order-difference-based} statistic $V_{2,n}(\rho)$.

Thus, by the delta method, we end up with the following lemma:
\footnote{
Yet another way would be to use the bipower version instead of $V_{2,n}(\rho)$:
\begin{align}
B_{n}(\rho/2,\rho/2) &:= \frac{1}{n-2}\sum_{j=3}^{n}\big|(2h_n)^{-1/\al}\Delta_k^2 X\big|^{\rho/2}\big|(2h_n)^{-1/\al}\Delta_{k-2}^2 X\big|^{\rho/2}.
\nonumber
\end{align}
}

\begin{lem}
\label{hm:lem_index.consistency}
For any $\rho$ satisfying \eqref{hm:alpha.esti.rho+1}, there exist a constant $\kappa=\kappa(\rho)>0$ for which $\aes(\rho)$ defined by \eqref{hm:def_alpha.est} satisfies that
\begin{equation}
h_n^{-\kappa}\left( \hat{\al}_n(\rho)- \al \right) = O_P(1),
\end{equation}
regardless of whether $T_n\equiv T$ or $T_n\to\infty$.
\end{lem}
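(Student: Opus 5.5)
The estimator $\hat{\al}_n(\rho)$ is an explicit smooth function of the ratio $\mathsf{H}_{2,n}(\rho)/\mathsf{H}_{1,n}(\rho)$, so the plan is to transfer the rate in \eqref{hm:index.esti.eq} to $\hat\al_n(\rho)$ by a delta-method argument.

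\emph{Step 1: write the estimator through the ratio.} Set
\begin{equation}
D_n:=\frac{(n-1)\sum_{k=4}^{n}\big|\Delta_k^2 X - \Delta_{k-2}^2 X\big|^{\rho}}{(n-3)\sum_{k=2}^{n}\big|\Delta_k^2 X\big|^{\rho}},
\end{equation}
so that $\hat\al_n(\rho)=\rho\log 2/\log D_n$. The unknown normalizations $(2h_n)^{-1/\al}$ and $(4h_n)^{-1/\al}$ factor out of the two sums, which gives the exact identity
\begin{equation}
\frac{\mathsf{H}_{2,n}(\rho)}{\mathsf{H}_{1,n}(\rho)}=2^{-\rho/\al}D_n .
\end{equation}
Taking logarithms yields
\begin{equation}
\log D_n=\frac{\rho\log 2}{\al}+\log\frac{\mathsf{H}_{2,n}(\rho)}{\mathsf{H}_{1,n}(\rho)}.
\end{equation}

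\emph{Step 2: control the logarithmic error.} Lemma \ref{hm:lem_4} and \eqref{hm:V2_se} give $\mathsf{H}_{j,n}(\rho)=\mathsf{H}_0(\rho)+O_P(h_n^\kappa)$ for $j=1,2$, with a common $\kappa>0$ after shrinking it if necessary.

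The limit $\mathsf{H}_0(\rho)$ is strictly positive, and here $\sig$ being separated from $0$ matters. In the finite-horizon case $\mathsf{H}_0(\rho)$ is random but a.s. bounded below by $\mathfrak{m}_\al(\rho)(\inf\sig)^\rho>0$. Hence the ratio equals $1+O_P(h_n^\kappa)$, which is \eqref{hm:index.esti.eq}. Since $\log(1+x)=O(|x|)$ near $0$, and the ratio is close to $1$ with probability tending to one, we get
\begin{equation}
\epsilon_n:=\log\frac{\mathsf{H}_{2,n}(\rho)}{\mathsf{H}_{1,n}(\rho)}=O_P(h_n^\kappa).
\end{equation}

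\emph{Step 3: invert.} Write
\begin{equation}
\hat\al_n(\rho)=\frac{\rho\log2}{\rho\log2/\al+\epsilon_n}=\frac{\al}{1+\al\epsilon_n/(\rho\log 2)}.
\end{equation}
On the event $\{|\al\epsilon_n/(\rho\log2)|\le 1/2\}$, whose probability tends to one, this gives
\begin{equation}
|\hat\al_n(\rho)-\al|\le C\al^2|\epsilon_n|/(\rho\log 2).
\end{equation}
Therefore $h_n^{-\kappa}(\hat\al_n(\rho)-\al)=O_P(1)$. Nothing in Steps 1--3 distinguishes $T_n\equiv T$ from $T_n\to\infty$, since both cases enter only through Lemma \ref{hm:lem_4} and \eqref{hm:V2_se}.

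\emph{Main obstacle.} The only nontrivial input is the third-order-difference statement \eqref{hm:V2_se}, which the paper only sketches. I would verify it by repeating Lemmas \ref{hm:lem_1}--\ref{hm:lem_4} with the decomposition \eqref{hm:inc.sum2}, handling its terms as follows.
\begin{itemize}
\item The remainders $\overline r_k-\overline r_{k-2}$ are controlled by Lemma \ref{hm:lem_1} together with $|x+y|^\rho\le|x|^\rho+|y|^\rho$.
\item The extra term $(\sig_{k-2}-\sig_{k-4})(4h_n)^{-1/\al}\Delta_k^2Z$ is bounded in $\rho$-th mean through the H\"older continuity of $\sig$ and the same moment bound on increments used in \eqref{hm:lem_1-3}. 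This gives $O_P(h_n^{\kappa})$, because $\Delta_k^2Z$ is independent of $\mcf_{t_{k-2,n}}$ and has bounded $\rho$-moment after scaling.
\item The centering step uses that $(4h_n)^{-1/\al}(\Delta_k^2Z^{(\al)}-\Delta_{k-2}^2Z^{(\al)})$ is exactly $S_\al$-distributed, since its four independent increments have coefficients summing in $|\cdot|^\al$ to $4$.
\item The martingale fluctuation is handled as in Lemma \ref{hm:lem_2}. The overlap between summands $k$ and $k-2$ only creates finite-range dependence, which I would treat by splitting the sum into a bounded number of interleaved subsums, each a martingale difference array.
\end{itemize}
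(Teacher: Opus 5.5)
Your proposal is correct and follows the paper's own route. The paper obtains the lemma by applying the delta method to \eqref{hm:index.esti.eq}, which itself comes from Lemma \ref{hm:lem_4} and \eqref{hm:V2_se}; that is exactly your Steps 1--3, and your sketch of \eqref{hm:V2_se} fills in what the paper only states as ``in the same way as the above lemmas.'' One small caveat: when $\beta=0$ one can have $2\rho\ge\al$, so the centered summands have infinite variance and the martingale fluctuation should be bounded with a Burkholder exponent $p\in(1,\,2\wedge(\al/\rho))$ rather than with second moments, which still gives the polynomial rate you need.
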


\subsection{Riemann integrals}

Having obtained an estimator $\hat{\al}_n$ that satisfies \eqref{hm:def_aes}, we proceed to estimate $\Sigma_0= \Sigma_0(\theta_0)$ and $\Gamma_0= \Gamma_0(\theta_0)$ in \eqref{hm:AV}.
Let us recall their specific forms:
\begin{itemize}
\item For $T_n\equiv T$ (with $V(x)\equiv 1$),
\begin{align}
\Gamma_0 &=\frac1T \int_0^T\Big(\nabla_\theta a(\theta_0; X_{t})\Big)^{\otimes 2}\, dt,
\nn\\
\Sigma_0 &=\frac1T \int_0^T\frac{1}{\sigma(X_{t})}\Big(\nabla_\theta a(\theta_0; X_{t})\Big)^{\otimes 2}\, dt.
\nonumber
\end{align}
\item For $T_n\to\infty$,
\begin{align}
\Gamma_0 &=\int_{\mathbb{R}}V(x)^2 \Big(\nabla_\theta a(\theta_0; x)\Big)^{\otimes 2}\, \pi(\theta_0, dx),
\nn\\
\Sigma_0 &=\int_{\mathbb{R}}\frac{V(x)}{\sigma(x)}\Big(\nabla_\theta a(\theta_0; x)\Big)^{\otimes 2}\, \pi(\theta_0, dx).
\nonumber
\end{align}
\end{itemize}

\subsubsection{Basic convergence in probability}
\label{hm:sec_integ.basic.lim}

Again, we use the shorthand $f(x,\theta)=(\nabla_\theta a(\theta;x))^{\otimes 2}$. 
We denote by $\psi:\,\mbbr\times\overline{\Theta} \to \mbbr$ either:
\begin{align}
    & f(x,\theta) \quad \text{or} \quad \frac{1}{\sig(x)}f(x,\theta) \qquad \text{for $T_n\equiv T$;} \nn\\
    & V(x)f(x,\theta)\quad \text{or} \quad \frac{V(x)^2}{\sig(x)}f(x,\theta)
    \qquad \text{for $T_n\to\infty$.} \nn\\
\end{align}
By Assumption \ref{hm:asm-2}, $\sup_\theta |\partial_\theta^l \psi(\cdot,\tz)| \lesssim 1+|x|^C$ for $l\in\{0,1\}$ when $T_n\equiv T$, and $\psi(\cdot,\tz) \in L^1(\pi(\tz,dx))$ when $T_n\to\infty$.
Let
\begin{align}
\Psi_n(\theta) &:= \frac1n \sumk \psi(X_{t_{k-1,n}},\theta), 
\nn\\
\Psi_0(\theta) 
&:= \left\{
\begin{array}{ll}
\displaystyle{\frac1T \int_0^T \psi(X_t,\theta)dt} & (T_n\equiv T) \\[3mm]
\displaystyle{\int \psi(x,\theta)\pi(\tz,dx)} & (T_n\to\infty) 
\end{array}
\right. .
\end{align}

\begin{lem}\label{hm:lem_integ1}
We have
\begin{equation}
    \sup_\theta \left| \Psi_n(\theta) - \Psi_0(\theta) \right| \cip 0.
\end{equation}   
\end{lem}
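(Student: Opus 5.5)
We split into pointwise convergence for each fixed $\theta\in\overline{\Theta}$ and an equicontinuity (tightness in $C(\overline{\Theta})$) argument; since $\overline{\Theta}$ is compact, pointwise convergence in probability plus an $O_P(1)$ bound on $\sup_\theta|\partial_\theta\Psi_n(\theta)|$ and continuity of $\Psi_0$ give the uniform statement. Assumption \ref{hm:asm-2} supplies the derivative bounds. In the finite-horizon case, $\sup_\theta|\partial_\theta\psi(x,\theta)|\lesssim 1+|x|^C$. Hence $\sup_\theta|\partial_\theta\Psi_n(\theta)|\lesssim 1+\sup_{t\le T}|X_t|^C=O_P(1)$, because the trajectory is a.s. bounded. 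In the infinite-horizon case, $V\partial_\theta f$ and $(V^2/\sigma)\partial_\theta f$ are essentially bounded, so the Lipschitz constant in $\theta$ is deterministic. Combining this with a finite $\epsilon$-net of $\overline{\Theta}$ reduces the lemma to fixed $\theta$.

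\emph{Finite horizon, fixed $\theta$.} We write
\[
\Psi_n(\theta)-\Psi_0(\theta)=\frac1n\sum_k \frac1{h_n}\int_k\big(\psi(X_{t_{k-1,n}},\theta)-\psi(X_s,\theta)\big)ds
\]
(here $nh_n=T$). The local Lipschitz bound in Assumption \ref{hm:asm-2}.1 applies, and $1/\sigma$ is bounded and H\"older, so $1/\sigma\cdot f$ is locally H\"older with exponent $\zeta$. After localizing on $\{\sup_{t\le T}|X_t|\le R\}$, the integrand is therefore bounded by $C_R(|X_s-X_{t_{k-1,n}}|\wedge 1)^{\zeta}$. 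Its expectation is bounded by $C_R\,\E[\sup_{s\in[t_{k-1,n},t_{k,n}]}|X_s-X_{t_{k-1,n}}|^{\zeta\rho'}]$ for small $\rho'$, which tends to zero by the argument already used for \eqref{hm:lem_1-3} and \eqref{hm:lem_1-5} (Lemma \ref{hm:lem_moment.ineq} together with the decomposition $Z=Z^{(\al)}+Z^\triangle$). Letting $R\to\infty$ finishes this case. Alternatively one can simply note that, pathwise, $s\mapsto\psi(X_s,\theta)$ is c\`adl\`ag and bounded, so the Riemann sums converge a.s. by dominated convergence; this avoids moments entirely, and I would use it.

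\emph{Infinite horizon, fixed $\theta$.} This is the main step. I would mimic the proof of Lemma \ref{hm:lem_4}, replacing $\sigma^\rho$ by $\psi(\cdot,\theta)$, which is bounded there by Assumption \ref{hm:asm-2}.2. We split the error into a bias term $\overline\delta'_n$ and a fluctuation term $\overline\delta''_n$. The bias term is controlled by the total-variation mixing bound \eqref{hm:exp.mix}, giving $O(1/n+T_n^{-\min(\mathfrak a,1)}\log T_n)\to0$. The fluctuation term is handled by a variance computation with Ibragimov's covariance inequality \eqref{hm:Ibragimov.ineq} for the bounded functional $\psi(X_\cdot,\theta)$, giving $O(n^{-1}+n^{-1}T_n^{1-\mathfrak a}+n^{-1}\log T_n)\to0$. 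The only delicate point is that boundedness of $\psi$ is needed both for the TV estimate and for Ibragimov's inequality. Assumption \ref{hm:asm-2}.2 grants exactly this; no condition beyond $T_n\to\infty$ and $n\to\infty$ is required. If one wanted to avoid the mixing rate altogether, one could instead invoke the ergodic theorem of Appendix \ref{sB2} for $\frac1{T_n}\int_0^{T_n}\psi(X_t,\theta)dt$. The discretization error would then be controlled using \eqref{hm:asm-2-2} (uniform continuity of $\psi$ in $x$) together with $\sup_k\P(\sup_{s\in[t_{k-1,n},t_{k,n}]}|X_s-X_{t_{k-1,n}}|>\epsilon)\to0$, which is also what the equicontinuity in $x$ is designed for.
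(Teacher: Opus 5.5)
Your proof is correct, but in the infinite-horizon case it takes a different route from the paper.

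\textbf{Infinite horizon.} The paper compares $\Psi_n(\theta)$ with the continuous-time average $T_n^{-1}\int_0^{T_n}\psi(X_t,\theta)\,dt$. It controls the discretization error $\frac1n\sum_k\frac1{h_n}\int_k\sup_\theta|\psi(X_t,\theta)-\psi(X_{t_{k-1,n}},\theta)|\,dt$ using the uniform continuity \eqref{hm:asm-2-2}, the event of no jump of size $\ge 1$ in $(t,t+h]$, and smallness of $\P(|X_{t+h}-X_t|\ge\delta')$ on that event. Qualitative ergodicity and $\sup|\partial_\theta\psi|<\infty$ then finish the job.

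You instead work directly on the discrete sum, with the bias/variance split of Lemma \ref{hm:lem_4}, the TV rate \eqref{hm:exp.mix}, and Ibragimov's inequality. Your route never needs \eqref{hm:asm-2-2} or any control of $X$ over a sampling step, and it produces a rate. The price is that it uses polynomial mixing, namely the $\alpha$-mixing bound asserted in the proof of Lemma \ref{hm:lem_4}. That bound can be justified from \eqref{rate} together with $\sup_s\E[|X_s|^q]<\infty$, and any vanishing rate suffices here. The paper's route needs only ergodicity plus continuity of $\psi$ in $x$. For fixed $\theta$, Proposition \ref{prop} would even give the discrete-sum convergence directly.

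\textbf{Finite horizon.} Your pathwise Riemann-sum argument is the one the paper itself uses in Appendix \ref{sB} for $\Gamma_n$ and $\Sigma_n$. It needs only continuity of $\psi(\cdot,\theta)$ and no moments. The paper's proof of this lemma instead goes through $L^1$ bounds on the increments of $X$.

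\textbf{A computational correction.} Bounding $\sum_{m=1}^{n}(1+mh_n)^{-\mathfrak a}$ by an integral costs a factor $h_n^{-1}$. The off-diagonal part of the variance is therefore $O\bigl(T_n^{-1}\int_0^{T_n}(1+y)^{-\mathfrak a}\,dy\bigr)$, i.e.\ of order $T_n^{-1}$, $T_n^{-1}\log T_n$ or $T_n^{-\mathfrak a}$. It is not the $n^{-1}$-type bound you state, which you carried over from the display in Lemma \ref{hm:lem_4}. It still vanishes since $T_n\to\infty$, so your conclusion stands.
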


\begin{proof}
Suppose for a moment that
\begin{equation}\label{hm:lem_integ1-1}
    \sup_\theta \left| \Psi_n(\theta) - \frac{1}{T_n} \int_0^{T_n} \psi(X_{t},\theta)dt \right| \cip 0.
\end{equation}
For $T_n\equiv T$, \eqref{hm:lem_integ1-1} is exactly what we want to show. 
For $T_n\to\infty$, the assumed ergodicity ensures that 
$T_n^{-1} \int_0^{T_n} \psi(X_{t},\theta)dt \cip \Psi_0(\theta)=\int \psi(x,\theta)\pi(\tz,dx)$ for each $\theta$, and its uniformity in $\theta$ follows from the boundedness $\sup_{(x,\theta)} |\partial_\theta \psi(x,\theta)| <\infty$, which holds by Assumption \ref{hm:asm-2}.
Thus, it remains to show \eqref{hm:lem_integ1-1}.

Note that
\begin{align}\label{hm:lem_integ1-2}
    \sup_\theta \left| \Psi_n(\theta) - \frac{1}{T_n} \int_0^{T_n} \psi(X_{t},\theta)dt \right|
    &\le \frac1n \sumk \frac{1}{h_n} \int_k \sup_\theta | \psi(X_t,\theta) - \psi(X_{t_{k-1,n}},\theta) | dt.
\end{align}
First, we consider $T_n\equiv T$; as before, we may and do suppose that $\sup_{t\le T}\E[|X_t|^K]<\infty$ for any $K>0$. 
By the global Lipschitz property of $\sig(x)$, the right-hand side of \eqref{hm:lem_integ1-2} can be bounded by a constant multiple of the following quantity:
\begin{align}
    & \frac1n \sumk \frac{1}{h_n} \int_k \sup_\theta| f(X_t,\theta) - f(X_{t_{k-1,n}},\theta) | dt
    \nn\\
    &{}\qquad + \frac1n \sumk \sup_\theta |f(X_{t_{k-1,n}},\theta)|\,\frac{1}{h_n} \int_k | \sig(X_t) - \sig(X_{t_{k-1,n}}) | dt
    \nn\\
    &\lesssim \frac1n \sumk (1+|X_{t_{k-1,n}}|^C) \,
    \frac{1}{h_n} \int_k | X_t -X_{t_{k-1,n}}| \max\left\{ 1,\,| X_t -X_{t_{k-1,n}}|\right\}^C dt
    \nn\\
    &{}\qquad + \frac1n \sumk (1+|X_{t_{k-1,n}}|^C) \frac{1}{h_n} \int_k | X_t -X_{t_{k-1,n}}| dt.
\end{align}
We take the expectation in \eqref{hm:lem_integ1-2} with the conditioning with respect to $\mcf_{t_{k-1,n}}$: through a similar manner to the proof of \eqref{hm:lem_1-3} via the expression \eqref{hm:index-p+1} with $X_{t_{k-2,n}}$ replaced by $X_{t_{k-1,n}}$, we get
\begin{align}
    & \E\left[ \sup_\theta \left| \Psi_n(\theta) - \frac{1}{T} \int_0^{T} \psi(X_{t},\theta)dt \right|\right]
    \nn\\
    & \lesssim \frac1n \sumk \frac{1}{h_n} \int_k 
    \E\left[ (1+|X_{t_{k-1,n}}|^C) \E\left[| X_t -X_{t_{k-1,n}}|^C \mid \mcf_{t_{k-1,n}} \right]\right] dt = o(1).
\end{align}
This establishes \eqref{hm:lem_integ1-1}, completing the proof for $T_n\equiv T$.

We now consider $T_n \to \infty$. 
Under the present assumptions, the function $\psi(x,\theta)$ is essentially bounded, say $\sup_{x,\theta}|\psi(x,\theta)|\le c_\psi$. 
Denote by $N^Z(ds,du)$ the Poisson point measure associated with $Z$, and let $A_{t}(h):=\{N^Z((t,t+h], \{|u|\ge 1\}) = 0\}$ for $t\ge 0$ and $h>0$. Then,
\begin{align}
    \sup_{t\ge 0} 
    \E\left[ \sup_\theta \left| \psi(X_{t+h},\theta) - \psi(X_t,\theta) \right| ;\, A_{t}(h)^c\right]
    &\le 2 c_\psi \P\bigl[A_0(h)^c\bigr] = O(h).
\end{align}
Fix any $\eps'>0$. Then, there exists a constant $\delta'>0$ for which $\sup_{|x-y|\le \delta'} \sup_\theta | \psi(x,\theta) - \psi(y,\theta)|<\eps'/2$. 
We have
\begin{align}\label{hm:202510251444}
    \sup_{t\ge 0} \E\left[ \sup_\theta \left| \psi(X_{t+h},\theta) - \psi(X_t,\theta) \right| ;\, A_{t}(h)\right]
    &\le \frac{\eps'}{2} + 2 c_\psi \sup_{t\ge 0} \P\left[A_{t}(h) \cap \{|X_{t+h}-X_t| \ge \delta'\}\right].
\end{align}
Since the process $v\mapsto Z_v-Z_t$ for $v\in(t,t+h]$ has only bounded jumps on $A_t(h)$, under Assumption \ref{hm:asm-1}.2 and the linear growth property of the coefficients of $X$, it is standard to show that the second term in the upper bound of \eqref{hm:202510251444} goes to $0$ for $h\to 0$. 
Piecing together the above observations and \eqref{hm:lem_integ1-2} concludes \eqref{hm:lem_integ1-1}. The proof is complete.
\end{proof}

By Lemma \ref{hm:lem_integ1} and the continuity of $\theta \mapsto \psi(x,\theta)$ for each $x$, we have
\begin{equation}
 \hat{\Psi}_n \coloneqq \Psi_n(\tes) \xrightarrow{\P} \Psi_0(\tz)
\end{equation}
for the LADE $\tes$. 
Let
\begin{equation}
    \hat{f}_{k-1} \coloneqq f(X_{t_{k-1,n}},\tes).
\end{equation}
For both $T_n\to\infty$ and $T_n\equiv T$ (with $V(x)\equiv 1$), it holds that
\begin{align}
\hat{\Gamma}_n &:= \frac1n \sumk V_{k-1}^2 
\hat{f}_{k-1}
\cip \Gamma_0,
\nn\\
\Sigma^{\ast}_n &\coloneqq \frac1n \sumk 
\frac{V_{k-1}}{\sigma_{k-1}}
\hat{f}_{k-1}
\cip \Sigma_0.
\label{hm:Sig.ast.conv}
\end{align}
Note that $\Sigma^{\ast}_n$ is not a statistic, as it depends on the unknown function $\sig(\cdot)$. In the next subsection, we address this by plugging in a suitable estimator.

\medskip

Before proceeding, we remark on the case of a constant scale, namely $\sig(x)\equiv \sig_0>0$), with focusing on the weight function $V\equiv 1$ even in the case of $T_n\to\infty$.
Then, the asymptotic covariance matrix takes the following simpler form:
\begin{align}
    \mathcal{A}_0 
    = \sig_0^2 \left(2\phi_{\al}(0)\right)^{-2} \times \left\{
    \begin{array}{ll}
    \displaystyle{\left(\frac1T \int_0^T f(\theta_0; X_{t}) dt\right)^{-1}} & (T_n\equiv T) \\[3mm]
    \displaystyle{\left(\int f(\theta_0, x) \pi(\tz,dx)\right)^{-1}} & (T_n\to\infty) 
    \end{array}
    \right. .
\end{align}
By Lemma \ref{hm:lem_integ1} and recalling \eqref{hm:h^al-rate-gam}, we have
\begin{equation}
    \hat{\sig}_n^2 \left(2\phi_{\aes}(0)\right)^{-2}
    \left( \frac1n \sumk \hat{f}_{k-1} \right)^{-1} \cip \mathcal{A}_0
\end{equation}
for any $\hat{\sig}_n \cip \sig_0$.
Hence, it suffices to have a consistent estimator of $\sig_0$. 
By Lemma \ref{hm:lem_index.consistency} and \eqref{hm:h^al-rate}, we have for some $\kappa'>0$,
\begin{align}
    \hat{\mathsf{H}}_{1,n}(\rho) 
    &\coloneqq \frac{1}{n-1}\sum_{k=2}^{n}\big|(2h_n)^{-1/\aes}\Delta_k^2 X\big|^{\rho}
    \nn\\
    &= \mathsf{H}_{1,n}(\rho) (1+o_P(h_n^{\kappa-\eps}))^\rho
    = \left( \mathsf{H}_0(\rho) + O_P(h_n^{\kappa})\right)(1+o_P(h_n^{\kappa-\eps}))^\rho
    \nn\\
    &= \mathsf{H}_0(\rho) + O_P(h_n^{\kappa'}). 
\end{align}
By the continuity of $\al \mapsto \mathfrak{m}_\al(\rho)$ (recall the expression \eqref{hm:def_m.al}), 
we get
\begin{align}
    \hat{\sig}_n=\hat{\sig}_n(\rho) \coloneq \left(\frac{\hat{\mathsf{H}}_{1,n}(\rho)}{\mathfrak{m}_{\hat{\al}_n}(\rho)}\right)^{1/\rho} \cip \sig_0.
\end{align}


\subsubsection{Dealing with unknown scale coefficient}

Our goal here is to construct statistics $\hat{\Sigma}_n(\rho)$ such that
\begin{equation}\label{hm:scale.infe_goal}
    \hat{\Sigma}_n(\rho) \cip \Sigma_0,
\end{equation}
with focusing on an unknown non-constant $\sigma(x)$. 
We will consider approximating each instantaneous scales $\sigma_{k-1}=\sig(X_{t_{k-1}})$ over rolling windows with shrinking width $h_n l_n$, where $(l_n)_{n\ge 1}$ is a given divergent positive-integer sequence such that
\begin{equation}\label{hm:l-rate}
    l_n \gtrsim h_n^{-\mathsf{c}},\qquad l_n h_n \to 0
\end{equation}
for some $\mathsf{c}>0$; in particular, $l_n=o(n)$.
This amounts to normalizing the increments $\Del^2_k X$ in a non-parametric way.

For $k=2,\dots,n - l_n$, we introduce the statistics
\begin{align}
    \widetilde{\sigma}^{\flat}_{k-1,n}(\rho)=\widetilde{\sigma}^{\flat}_{k-1}(\rho)
    &\coloneq \left( \mathfrak{m}_{\aes}(\rho)^{-1}
    \hat{S}_{k}(\rho) \right)^{1/\rho},
\end{align}
where
\begin{align}
    \hat{S}_{k}(\rho) = \hat{S}_{k,n}(\rho)
    &\coloneq \frac{1}{l_n} \sum_{j=k+1}^{k+l_n} \big|(2h_n)^{-1/\aes}\Delta_j^2 X\big|^{\rho},
\end{align}
For $\mathsf{c}_\sigma := \inf_x \sigma(x)>0$, we define
\begin{equation}
    \hat{\sigma}_{k-1,n}(\rho) = \hat{\sigma}_{k-1}(\rho)
    \coloneq \max\left\{\widetilde{\sigma}^{\flat}_{k-1}(\rho), \, \frac{\mathsf{c}_\sigma}{2}\right\}.
\end{equation}
This $\hat{\sigma}_{k-1}(\rho)$ will serve as a spot-scale estimator of $\sigma_{k-1}=\sigma(X_{t_{k-1,n}})$; we have $\widetilde{\sigma}^{\flat}_{k-1}(\rho)>0$ a.s., and we do not need to specify the value $\mathsf{c}_\sigma$ in practice.

\begin{lem}\label{hm:lem-s1}
    Under the above-mentioned setup, we have \eqref{hm:scale.infe_goal} for ($V_{k-1} \coloneq V(X_{t_k-1,n})$)
    \begin{equation}\label{hm:scale.est-1}
    \hat{\Sigma}_n(\rho) \coloneq 
    \left\{
    \begin{array}{ll}
    \displaystyle{\frac1n \sumk \frac{1}{\hat{\sigma}_{k-1}(\rho)}\hat{f}_{k-1}} & (T_n\equiv T) \\[5mm]
    \displaystyle{\frac1n \sumk \frac{V_{k-1}}{\hat{\sigma}_{k-1}(\rho)}\hat{f}_{k-1}} & (T_n\to\infty) 
    \end{array}
    \right. .
    \end{equation}
\end{lem}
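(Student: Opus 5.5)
Since \eqref{hm:Sig.ast.conv} already gives $\Sigma^\ast_n\cip\Sigma_0$, we only need to show that replacing $\sigma_{k-1}$ by $\hat\sigma_{k-1}(\rho)$ costs $o_P(1)$. By construction $\hat\sigma_{k-1}\ge \mathsf{c}_\sigma/2$ and $\sigma_{k-1}\ge\mathsf{c}_\sigma$. Hence $|\hat\sigma_{k-1}^{-1}-\sigma_{k-1}^{-1}|\le 2\mathsf{c}_\sigma^{-2}|\hat\sigma_{k-1}-\sigma_{k-1}|\wedge 3\mathsf{c}_\sigma^{-1}$. Moreover, since $\sigma_{k-1}\ge \mathsf{c}_\sigma$, the truncation at $\mathsf{c}_\sigma/2$ only decreases the error, so $|\hat\sigma_{k-1}-\sigma_{k-1}|\le|\widetilde\sigma^\flat_{k-1}-\sigma_{k-1}|$. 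Using $|x^{1/\rho}-y^{1/\rho}|$-type bounds on the relevant event, it then suffices to prove
\begin{equation}
\frac1n\sum_{k=2}^{n-l_n}\bigl(V_{k-1}\bigr)\,|\hat f_{k-1}|\,\Bigl(\bigl|\hat S_k(\rho)-\mathfrak{m}_{\aes}(\rho)\sigma_{k-1}^\rho\bigr|\wedge 1\Bigr)\cip 0 .
\end{equation}
The last $l_n=o(n)$ indices are negligible because their summands are bounded by $C|\hat f_{k-1}|V_{k-1}$. Finally, $\hat f_{k-1}$ can be replaced by $f(X_{t_{k-1,n}},\tz)$ using the $\theta$-regularity in Assumption \ref{hm:asm-2} and the consistency of $\tes$ (Theorems \ref{t1}, \ref{t2}).

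Next, I would decompose $\hat S_k(\rho)-\mathfrak{m}_{\al}(\rho)\sigma_{k-1}^\rho$ into four pieces.
\begin{enumerate}
\item \emph{Index replacement.} This is the change from $h_n^{-1/\aes}$ to $h_n^{-1/\al}$ and from $\mathfrak{m}_{\aes}$ to $\mathfrak{m}_\al$. It is a multiplicative factor $1+o_P(h_n^{\kappa-\eps})$ uniformly in $k$, by \eqref{hm:h^al-rate}, Lemma \ref{hm:lem_index.consistency} and the continuity of $\al\mapsto\mathfrak{m}_\al(\rho)$. Multiplying by $\frac1n\sum_k|\hat f_{k-1}|V_{k-1}\hat S_k=O_P(1)$ shows it is negligible.
\item \emph{Remainders.} These come from \eqref{hm:inc.sum}. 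Using $||x+y|^\rho-|x|^\rho|\le|y|^\rho$, the error is bounded by $\frac1{l_n}\sum_{j=k+1}^{k+l_n}|\overline r_j|^\rho$. Averaging over $k$ with weight $|f|V$, and exchanging the order of summation, each $j$ appears in at most $l_n$ windows. The result is bounded by $\sup|f|\,\cdot$ (or by polynomial weights with localization when $T_n\equiv T$) $\frac1n\sum_j|\overline r_j|^\rho=O_P(h_n^{\kappa_1})$ by Lemma \ref{hm:lem_1}.
\item \emph{Freezing the scale.} This is the change from $\sigma_{j-2}^\rho$ to $\sigma_{k-1}^\rho$ for $j\in[k+1,k+l_n]$. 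It is controlled by $\sup_{|s-t|\le (l_n+2)h_n}|\sigma_s^\rho-\sigma_t^\rho|$. For $T_n\equiv T$, we use the Hölder property of $\sigma$, the boundedness of $|\Delta_j^2Z|^\rho$ on average, and the fact that $\sum_j$ of jumps of size $\ge\eps$ is finite, so only $O(l_n)$ windows contain a big jump. For $T_n\to\infty$, we argue in expectation via the uniform modulus argument from the proof of Lemma \ref{hm:lem_integ1} with $h$ replaced by $l_nh_n\to0$.
\item \emph{Law of large numbers within the window.} This is $\frac1{l_n}\sum_j\sigma_{k-1}^\rho(|(2h_n)^{-1/\al}\Delta_j^2Z|^\rho-\mathfrak{m}_\al(\rho))$. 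Here $\sigma_{k-1}$ is $\Ff_{k-1}$-measurable, and the $\Delta^2_jZ$ are $1$-dependent and independent of $\Ff_{t_{k-1}}$. Their $\rho$-th powers have bounded second moments because $2\rho<\al$ by \eqref{hm:alpha.esti.rho+1}. Hence the conditional $L^2$ norm is $O(l_n^{-1/2})$, plus the bias term $O(h_n^{\kappa_2})$ from Lemma \ref{hm:lem_2}. Averaging over $k$ with bounded weights gives $O_P(l_n^{-1/2}+h_n^{\kappa_2})\to0$.
\end{enumerate}

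The main obstacle is step 3, the uniform spot-continuity of $\sigma(X_\cdot)$ over windows of length $l_nh_n$. Jumps of $X$ break pathwise continuity, so the argument has to be in $L^1$-average over $k$ rather than uniform. For each $k$, I would bound $\E[|\sigma_{j-2}^\rho-\sigma_{k-1}^\rho|\wedge C]\lesssim \E[|X_{t_{j-2}}-X_{t_{k-1}}|^{\zeta\rho}\wedge1]$. By Lemma \ref{hm:lem_moment.ineq} and the decomposition $Z=Z^{(\al)}+Z^\triangle$, this is $\lesssim (l_nh_n)^{\kappa''}$ for some $\kappa''>0$, with the localization of Lemma \ref{hm:lem_1} for $T_n\equiv T$. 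The truncation $\wedge1$ is legitimate because all quantities are bounded above by $\sigma$'s bound. Combining the four steps with the weights controlled by Assumption \ref{hm:asm-2}, and then using \eqref{hm:Sig.ast.conv}, yields \eqref{hm:scale.infe_goal}.
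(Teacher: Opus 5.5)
Your argument follows the paper's route. It uses the same decomposition of $\hat S_k(\rho)-\mathfrak{m}_{\aes}(\rho)\sigma_{k-1}^\rho$: index replacement, the remainders $\overline{r}_j$ (Lemma \ref{hm:lem_1}), a within-window law of large numbers with the bias of Lemma \ref{hm:lem_2}, and freezing of the scale via $l_nh_n\to0$, followed by comparison with $\Sigma^\ast_n$. Your bookkeeping is tidier than the paper's $D_{1,n}/D_{2,n}$ split and its $\overline{R}_k(\rho)$ bookkeeping: you note that the floor at $\mathsf{c}_\sigma/2$ can only shrink $|\hat\sigma_{k-1}-\sigma_{k-1}|$, you use the truncation $\wedge 1$, and you exchange the order of summation in Step 2.

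One justification in Step 4 is wrong. The condition \eqref{hm:alpha.esti.rho+1} gives $2\rho<\al$ only when $\beta>0$, via $\rho<\beta<\al/2$ (Remark \ref{r23}). For $\beta=0$ it permits any $\rho<\al\wedge1$. If $\rho\ge\al/2$, then $\E|(2h_n)^{-1/\al}\Delta_j^2Z|^{2\rho}$ is not bounded in $n$: by Fatou's lemma, since $\E|S_\al|^{2\rho}=\infty$. So the conditional $L^2$ bound $O(l_n^{-1/2})$ is not available in that case.

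The step is easy to repair. The centered summands with odd $j$ are independent of each other and of $\Ff_{t_{k-1,n}}$, and likewise for even $j$. Apply the von Bahr--Esseen inequality with some $p\in(1,2)$ such that $p\rho<\al$. This gives a conditional $L^p$ bound $O(l_n^{1/p-1})\to0$, which is all you need since you only prove consistency. Alternatively, restrict to $\rho<\al/2$.

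The paper's Burkholder step \eqref{hm:lem-s1-p3}, with $n^{-1}\sum_k\E[\overline{R}_k(\rho)^2]=O(1)$, implicitly uses the same second moments. So the caveat applies there too.
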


\begin{proof}
(i) We first consider $T_n \equiv T$.
Let
\begin{equation}
    B_{k-1} \coloneq \left\{ \widetilde{\sigma}^{\flat}_{k-1,n}(\rho) \ge \frac{\mathsf{c}_\sigma}{2} \right\}.
\end{equation}
Since $\hat{\sigma}_{k-1,n}(\rho)\ge \mathsf{c}_\sigma/2$, we have
\begin{align}
    \left| \hat{\Sigma}_n(\rho) - \Sigma^\ast_n \right|
    & \le \frac1n \sumk \left|
    \frac{1}{\sigma_{k-1}} - \frac{1}{\hat{\sigma}_{k-1}(\rho)}
    \right| |\hat{f}_{k-1}|
    \nn\\
    &\lesssim 
    \frac1n \sumk |\hat{f}_{k-1}|
    \big| \widetilde{\sigma}^{\flat}_{k-1}(\rho) - \sigma_{k-1}\big| I_{B_{k-1}}
    + \frac1n \sumk |\hat{f}_{k-1}| I_{B_{k-1}^c}
    \nn\\
    &\eqqcolon D_{1,n}+D_{2,n}.
    \label{hm:lem-s1-p6}
\end{align}
From this, combined with \eqref{hm:Sig.ast.conv} with $V(x)\equiv 1$, \eqref{hm:scale.infe_goal} follows once we show 
both $D_{1,n} \cip 0$ and $D_{2,n}\cip 0$. 
Again, without loss of generality, we may and do suppose that $J$ has only bounded jumps.

First, we make some preliminary observations. 
By \eqref{hm:h^al-rate},
\begin{align}\label{hm:lem-s1-p1}
    \hat{S}_{k}(\rho) = \left(1+o_{P}(h_n^{\kappa-\varepsilon})\right)^{\rho} S_k(\rho),
\end{align}
where
\begin{equation}
    S_k(\rho) \coloneq \frac{1}{l_n} \sum_{j=k+1}^{k+l_n} \big|(2h_n)^{-1/\al}\Delta_j^2 X\big|^{\rho}.
\end{equation}
Let us generically denote by $\overline{R}_{k}(\rho)$ any non-negative random variable such that 
$n^{-1} \sum_k \E[\overline{R}_{k}(\rho)^2] = O(1)$. 
By the expression \eqref{hm:inc.sum} of $(2h_n)^{-1/\al}\Delta_k^2 X$ and mimicking the argument in the proof of Lemma \ref{hm:lem_1}, we can deduce the following estimates:
\begin{align}
    \left| S_{k}(\rho) - \frac{1}{l_n} \sum_{j=k+1}^{k+l_n} 
    \sig_{j-2}^\rho \big|(2h_n)^{-1/\al}\Delta_j^2 Z\big|^{\rho} \right|
    \lesssim \frac{1}{l_n} \sum_{j=k+1}^{k+l_n} |\overline{r}_j|^\rho
    \lesssim h_n^{\kappa_1} \, \overline{R}_{k}(\rho).
    \label{hm:lem-s1-p2}
\end{align}
Further, by the Burkholder inequality and the moment estimates in Lemma \ref{hm:lem_moment-1},
\begin{align}
    & \Biggl|
    \frac{1}{l_n} \sum_{j=k+1}^{k+l_n} 
    \sig_{j-2}^\rho \left( \big|(2h_n)^{-1/\al}\Delta_j^2 Z\big|^{\rho} - \E\left[\big|(2h_n)^{-1/\al}\Delta_2^2 Z\big|^{\rho} \right]\right)
    \nn\\
    &{}\qquad 
    - \frac{1}{l_n} \sum_{j=k+1}^{k+l_n} 
    \sig_{j-2}^\rho 
    \left( \E\left[\big|(2h_n)^{-1/\al}\Delta_2^2 Z\big|^{\rho} \right] - \mathfrak{m}_\al(\rho)\right)
    \Biggr| \le \frac{1}{\sqrt{l_n}} \overline{R}_{k}(\rho) + C h^{c}.
    \label{hm:lem-s1-p3}
\end{align}
Here again, $c$ denotes a generic positive constant which may vary at each appearance.
Combining \eqref{hm:lem-s1-p1}, \eqref{hm:lem-s1-p2}, and \eqref{hm:lem-s1-p3} gives
\begin{align}
    \hat{S}_{k}(\rho) &= 
    \mathfrak{m}_\al(\rho)\,\frac{1}{l_n} \sum_{j=k+1}^{k+l_n} \sig_{j-2}^\rho     
    + h_n^{c}\,\overline{R}_{k}(\rho).
\end{align}
We can further proceed as follows, with $c>0$ being small enough:
\begin{align}
    \hat{S}_{k}(\rho) 
    &= \mathfrak{m}_{\aes}(\rho)\,\sigma_{k-1}^{\rho}
    + \mathfrak{m}_{\al}(\rho)\,\frac{1}{l_n} \sum_{j=k+1}^{k+l_n} (\sig_{j-2}^\rho - \sigma_{k-1}^{\rho}) + h_n^{c}\,\overline{R}_{k}(\rho)
    \nn\\
    &= \mathfrak{m}_{\aes}(\rho)\,\sigma_{k-1}^{\rho} + h_n^{c}\,\overline{R}_{k}(\rho),
\end{align}
where the second step is due to \eqref{hm:l-rate}.
Since $\mathfrak{m}_{\al}(\rho)>0$ for any $\al$ in a compact subset of the open interval $(1/2,2)$, we arrive at
\begin{equation}
    {\widetilde{\sigma}^{\flat}_{k-1}(\rho)}^{\rho}
    = \mathfrak{m}_{\aes}(\rho)^{-1} \hat{S}_{k}(\rho) 
    = \sigma_{k-1}^{\rho} + h_n^{c}\,\overline{R}_{k}(\rho).
\end{equation}
Note that for $\rho\in(0,1]$ and $x,y\ge 0$,
\begin{align}
    |x-y| = \left| (x^\rho)^{1/\rho} - (y^\rho)^{1/\rho}\right|
    &= \left| \int_0^1 \frac{1}{\rho}\left( y^\rho + s(x^\rho - y^\rho)\right)^{1/\rho-1}ds(x^\rho - y^\rho) \right|
    \nn\\
    &\lesssim (1+|x|+|y|)^{1/\rho-1}|x^\rho - y^\rho|.
\end{align}
The previous two displays yield
\begin{equation}\label{hm:lem-s1-p5}
    \left|\widetilde{\sigma}^{\flat}_{k-1}(\rho) - \sigma_{k-1} \right|
    \le h_n^{c}\,\overline{R}_{k}(\rho)
\end{equation}
for $c>0$ small enough.

Now, for $D_{1,n}$, we note $|\hat{f}_{k-1}| \lesssim 1+|X_{t_{k-1,n}}|$, so that $n^{-1}\sumk |\hat{f}_{k-1}|^K = O_P(1)$ for any $K>0$. Then, the estimate \eqref{hm:lem-s1-p5} and the H\"{o}lder inequality give $D_{1,n} = O_P(h_n^c) \cip 0$. 

Next, $D_{2,n} \cip 0$ follows from 
$n^{-1} \sumk \P[B_{k-1}^c] \to 0$. 
Observe that, by \eqref{hm:lem-s1-p5}, for a sufficiently small $c>0$ and any $M>0$,
\begin{align}
    \frac1n \sumk \P[B_{k-1}^c] 
    &\le \frac1n \sumk \P\left[ 
    \sigma_{k-1} \le \frac{\mathsf{c}_\sigma}{2} 
    + \left| \widetilde{\sigma}^{\flat}_{k-1}(\rho) - \sigma_{k-1}\right|    
    \right]
    \nn\\
    &\le \frac1n \sumk \bigg( \underbrace{\P\left[ 
    \sigma_{k-1} \le \frac{\mathsf{c}_\sigma}{2} + h_n^{c/2} M
    \right]}_{=0~\text{for every $n$ large enough.}} 
    + P\left[ h_n^{c/2} \overline{R}_{k}(\rho) \ge M \right] \bigg)
    \nn\\
    &\lesssim h_n^c \to 0.
\end{align}
This concludes the proof of \eqref{hm:scale.infe_goal} for $T_n\equiv T$.

\medskip

(ii) The case of $T_n \to\infty$ is now almost done; in this case, we are assuming the boundedness of $(x,\theta) \mapsto V(x)f(x,\theta)$. To derive \eqref{hm:scale.infe_goal}, we can follow the same argument as in (i), starting from the estimate \eqref{hm:lem-s1-p6} with $T_n\equiv T$, by replacing $\hat{f}_{k-1}$ with the bounded term $V_{k-1}^2 \hat{f}_{k-1}$.
\end{proof}

\appendix
\section{Properties of the transition density}
\label{sA1}

In this section, we discuss the properties of the transition density of the solution $X$ for the SDE \eqref{sde} considered, for each $\theta$, as a time-homogeneous Markov process. This discussion will be based mainly on the paper \cite{AK}, where locally stable \textit{L\'evy-type processes} were studied. It is easy to re-formulate our SDE setting to the one adopted in \cite{AK}; in particular, the L\'evy kernel $\mu(x; du)$ in the formula \cite[Eq.(2.1)]{AK} for the generator of the process is just the image of our L\'evy measure $\nu(du)$ under the mapping 
$u\mapsto \sigma(x)u$, and the principal and nuisance parts in the decomposition \cite[Eq.(2.2)]{AK} of this kernel are the images under the same mapping of the similar parts of the decomposition \eqref{LM} of $\nu$ in our setting. The principal part is the stable kernel with  
$$
\lambda(x)=c_\al\sigma(x)^{\alpha}, \quad \rho(x)=0,
$$
 and it is easy to verify that Assumption \ref{Ass1} guarantees all the assumptions of  \cite[Theorem 3.3]{AK}. Therefore the latter theorem guarantees that the solution $X$ for the SDE \eqref{sde} with a fixed $\theta$ is a (well-defined) time-homogeneous Markov process, which admits a transition density $p_t(\theta; x,y)$ with respect to the Lebesgue measure: $\P(X_{t}\in dy|X_{0}=x)=p_{t}(x,y)dy$. Moreover, this density has a semi-explicit representation, which we will explain below. 
 
 Note that the \textit{compensated drift coefficient} introduced in \cite[Section~3.1]{AK}, in our setting, coincides with the drift coefficient $a(\theta;x)$ because the driving L\'evy process is pure jump; recall \eqref{psi_pj} and also \eqref{pur_jump}.
Define a \emph{dynamically mollified drift coefficient}
$$
 A_t(\theta; x)=\int_{\Re}a(\theta;x-z){1\over 2\sqrt{\pi} t^{1/\alpha}}e^{-z^2t^{-2/\alpha}}d z.
$$
This coefficient satisfies
\be\label{e1}
|\prt_x A_t(\theta;x)|\leq Ct^{-1+\delta_{drift}},
\ee
hence the usual Picard iteration scheme provides that the Cauchy problem
$$
d \mathfrak{f}_t(\theta; x)= A_t(\theta; \mathfrak{f}_t(\theta; x))\, dt, \quad \mathfrak{f}_0(\theta; x)=x
$$
has a unique solution. Moreover, it is easy to check that
$$
|A_t(\theta;x)- a(\theta;x)|\leq Ct^{{\eta}/{\al}},
$$
which yields that, for any solution $f_t(\theta;x)$ to the ODE \eqref{ODE},
$$
|\mathfrak{f}_t(\theta;x)- f_t(\theta;x)|\leq Ct^{1+\frac{\eta}{\al}}=Ct^{1/{\al}+\delta_{drift}}.
$$
Recall that we assume $\delta_{regr}\leq \delta_{drift},$ which means that assumption \eqref{F0} remains true with a true solution $f_t(\theta;x)$ replaced the an approximate  solution $\mathfrak{f}_t(\theta;x)$; see Remark \ref{remAss2}. 
Also, recall that we are given the constant (see Assumption \ref{AssStab})
\begin{equation}
 \delta<\min\{\delta_{regr}, \delta_\sigma, \delta_\nu\} \le \min\{\delta_{drift}, \delta_\sigma, \delta_\nu\}.   
\end{equation}
Furthermore, we denote
$$
\sigma_t(x)=\left({1\over t}\int_{0}^{t}\sigma(\mathfrak{f}_s(\theta_0; x))^\alpha\, ds\right)^{1/\alpha}.
$$
The following statement directly follows from part I of \cite[Theorem 3.4]{AK}.

\begin{thm}[Properties of $p_t(x,y)$]\label{tA1} Let Assumptions \ref{Ass1} and \ref{AssStab} hold true. Then
\be\label{decomp}
p_t(x,y)=\wt p_t(x,y)+r_t(x,y),
\ee
with the leading term given explicitly by 
\be\label{p_zero}
\wt p_t(x,y)
={1\over \sigma_t(x)t^{1/\alpha}}
\phi_{\al}
\left({y-\mathfrak{f}_t(\theta_0;x)\over \sigma_t(x)t^{1/\alpha}}\right),
\ee
where $\phi_\al$ is defined by \eqref{hm:def_phi.al}, and with the residual term satisfying
\be\label{Residue_1}
\sup_{x\in \Re}\int_{\Re}|r_t(x,y)|\, dy\leq Ct^\delta. 
\ee
\end{thm}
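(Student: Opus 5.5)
The statement is, as the text says, a specialization of part I of \cite[Theorem 3.4]{AK}. The plan is therefore to verify that the hypotheses of that theorem hold in our setting and to translate its conclusions into our notation. I will take the parametrix construction of \cite{AK} as given. There the transition density is written as $p_t=\wt p_t+\wt p_t\star\Phi_t$, where $\wt p_t$ is the ``frozen'' stable kernel centred at the dynamically mollified flow $\mathfrak{f}_t(\theta_0;x)$ and scaled by the averaged scale $\sigma_t(x)t^{1/\alpha}$, and $\Phi_t$ is the usual Neumann series. Setting $r_t:=p_t-\wt p_t$ gives \eqref{decomp} by definition. The work is in identifying $\wt p_t$ with \eqref{p_zero} and in bounding $\|r_t(x,\cdot)\|_{L^1}$.

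\textbf{Identification.} The L\'evy kernel of the process is the image of $\mu$ under $u\mapsto\sigma(x)u$. Its principal part is then the symmetric stable kernel with intensity $\lambda(x)=c_\alpha\sigma(x)^\alpha$. Because of \eqref{psi_pj} and \eqref{pur_jump}, the compensated drift equals $a(\theta;x)$. Freezing the coefficients along the curve $s\mapsto\mathfrak{f}_s(\theta_0;x)$ and integrating the generator over $[0,t]$ gives the characteristic function $\exp\bigl(-t\sigma_t(x)^\alpha|\xi|^\alpha\bigr)$, shifted by $\mathfrak{f}_t$. The inverse Fourier transform of this is exactly \eqref{p_zero}, with $\phi_\alpha$ as in \eqref{hm:def_phi.al}.

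\textbf{Hypothesis check.} I then check the hypotheses of \cite[Theorem 3.3, 3.4]{AK} one at a time.
\begin{itemize}
\item Boundedness of $\sigma$ and its separation from zero give ellipticity of $\lambda$.
\item The H\"older conditions \eqref{ass1} and \eqref{ass2} together with the balance condition \eqref{balance} give the drift and scale regularity. In particular, \eqref{e1} ensures that $\mathfrak{f}_t$ is well defined.
\item The bound \eqref{BG} with $\beta<\alpha$ controls the nuisance part.
\end{itemize}

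\textbf{The residual bound \eqref{Residue_1}.} The residual has three independent sources of error, and each enters the kernel $\Phi$ with its own time singularity.
\begin{itemize}
\item Replacing $a(\theta;y)$ by its mollified value along the flow costs $t^{-1+\delta_{drift}}$ in the $L^1$ norm of the kernel. This uses $|A_t-a|\le Ct^{\eta/\alpha}$ together with the derivative bound \eqref{e1}.
\item Replacing $\sigma(y)$ by $\sigma_t(x)$ costs $t^{-1+\delta_\sigma}$, from H\"older-$\zeta$ continuity over the spatial scale $t^{1/\alpha}$.
\item The nuisance measure $\nu$ costs $t^{-1+\delta_\nu}$, since it has index $\beta$ relative to the $\alpha$-stable scaling.
\end{itemize}
For each $\delta<\min\{\delta_{drift},\delta_\sigma,\delta_\nu\}$ this gives $\sup_x\int|\Phi_t(x,y)|\,dy\le Ct^{-1+\delta}$. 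The convolution powers then satisfy $\|\Phi^{\star k}_t\|_{L^1}\le C^k\,\Gamma(\delta)^k t^{-1+k\delta}/\Gamma(k\delta)$, so the Neumann series converges. Since $\wt p_t(x,\cdot)$ is a probability density, $\int|(\wt p\star\Phi)_t(x,y)|\,dy\le C\int_0^t s^{-1+\delta}\,ds=Ct^\delta$, which is \eqref{Residue_1}. The choice of $\delta$ is compatible with Assumption \ref{AssStab}.

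\textbf{Main obstacle.} I expect the hardest step to be the drift part of the kernel estimate. Because $a$ is only H\"older, the frozen flow has to use the mollified drift $A_t$ rather than $a$ itself. The bound $t^{-1+\delta_{drift}}$ relies on the balance condition $\alpha+\eta>1$ and on a careful comparison of the shifted stable kernels. I will import this part directly from \cite{AK} rather than reprove it.
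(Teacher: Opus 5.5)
Your proposal is correct and takes essentially the same route as the paper. The paper recasts the SDE in the L\'evy-type framework of \cite{AK}: the jump kernel is the image of the L\'evy measure under $u\mapsto\sigma(x)u$, the principal part has $\lambda(x)=c_\alpha\sigma(x)^\alpha$ and $\rho(x)=0$, and the compensated drift equals $a(\theta;x)$ by the pure-jump condition \eqref{pur_jump}; it then invokes part I of \cite[Theorem 3.4]{AK} directly, with $\delta$ taken from Assumption \ref{AssStab}. Your parametrix sketch of where $\delta_{drift}$, $\delta_\sigma$ and $\delta_\nu$ come from is extra heuristic commentary that the paper omits, and it is not needed once that theorem is imported.
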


The estimate \eqref{Residue_1} tells us that the term we called residual is indeed negligible in the integral sense. We have used this bound, combined with the stability assumption \eqref{h_step}, in order to show that the `linear part' $U_n(\theta)$ of the contrast function is `essentially martingale' in the sense that its predictable part is negligible; see Lemma \ref{l41}.

In order to analyze the `quadratic part' $Y(\theta),$ we require the residual term $r_t(x,y)$ to be small in another, point-wise sense. Namely, we have used in the proof of Proposition \ref{p41} that  
\be\label{Residue_2}
\sup_{x,y}|r_t(x,y)|\leq Ct^{-1/\alpha+\delta'}
\ee
with some $\delta'>0.$ There is a substantial difference between the integral and point-wise bounds for the residual term  $r_t(x,y)$ in \eqref{p_zero}, see \cite[Example~3.7]{AK} which shows that the basic Assumption \ref{Ass1}, while providing the integral bound \eqref{Residue_1}, is not sufficient for the point-wise bound \eqref{Residue_2}. This was the actual reason for us to introduce the additional Assumption \ref{AssReg}. 

\begin{thm}\label{tA2} 
Let Assumption \ref{Ass1} and Assumption \ref{AssReg} hold. Then the residue term in the decomposition \eqref{p_zero} satisfies \eqref{Residue_2} with 
$$
\delta'=1-\frac{\beta'}{\al}>0.
$$
\end{thm}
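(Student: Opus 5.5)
The plan is to go back into the parametrix construction of \cite{AK}, on which Theorem \ref{tA1} is based, and upgrade the integral bound \eqref{Residue_1} to the point-wise bound \eqref{Residue_2}. In \cite{AK} the density is represented as
\[
p_t(x,y)=\wt p_t(x,y)+\int_0^t\int_{\Re}\wt p_{t-s}(x,z)\,\Phi_s(z,y)\,dz\,ds,
\]
where $\Phi=\sum_{k\ge1}\Phi^{\circledast k}$ is generated by the kernel $\Upsilon_t(x,y)=(L_x-\partial_t)\wt p_t(x,y)$. Here $L$ is the generator of $X$. So $r_t$ is exactly the convolution term, and the task is to bound it uniformly in $(x,y)$ by $Ct^{-1/\alpha+\delta'}$.

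The first step is to split $\Upsilon_t$ according to its sources: the drift mismatch $a(\theta;x)-A_t(\theta;\mathfrak f_t)$, the scale mismatch $\sigma(x)$ versus $\sigma_t(\cdot)$, and the nuisance measure $\nu$. \cite{AK} already controls the first two in $L^1(dy)$ with the rates $t^{-1+\delta_{drift}}$ and $t^{-1+\delta_\sigma}$. The same computations also give point-wise bounds of the form $t^{-1+\delta_\ast}\,G_t(y-\mathfrak f_t(x))$, where $G_t(w)=t^{-1/\alpha}(1+|w|t^{-1/\alpha})^{-1-\alpha}$ is the stable-type majorant. This works because differentiating $\phi_\alpha$ in $x$ or in the scale preserves the stable tails.

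The nuisance part is the delicate one. The relevant term is
\[
\int_{\Re}\bigl(\wt p_t(x+\sigma(x)u,y)-\wt p_t(x,y)-\text{(linear correction, if any)}\bigr)\,\nu(du).
\]
I would split it at $|u|\lessgtr t^{1/\alpha}$.

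For small $u$, a Taylor expansion combined with \eqref{BG} gives $t^{-\beta/\alpha}G_t$. This is harmless since $\beta\le\beta'$.

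For large $u$, the point-wise value near $y$ is governed by the $\nu$-mass of a ball of radius $\approx t^{1/\alpha}$ around $u\approx(y-x)/\sigma(x)$, which cannot be controlled by \eqref{BG} alone; this is exactly the failure exhibited in \cite[Example~3.7]{AK}. Here I would use \eqref{dimension}. For $|u|\ge 2t^{1/\alpha}$, cover the region by dyadic shells around $z=(y-\mathfrak f_t)/\sigma$. Using $|\nu|(B_r(z))\le Cr^\kappa|z|^{-\beta'-\kappa}$ together with $\sup_w\wt p_t\le Ct^{-1/\alpha}$, one obtains
\[
\Bigl|\int \wt p_t(x+\sigma u,y)\,\nu(du)\Bigr|\lesssim t^{-1/\alpha}\,t^{\kappa/\alpha}\,|z|^{-\beta'-\kappa}+t^{-\beta'/\alpha}G_t .
\]
The condition $1-\kappa<\beta'$ should make the shell sum converge and produce an overall bound $|\Upsilon_t(x,y)|\le Ct^{-1+\delta'}\bigl(G_t(y-\mathfrak f_t)+t^{-1/\alpha}\wedge(\dots)\bigr)$ with $\delta'=1-\beta'/\alpha$. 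This is the main obstacle: getting the summation and the choice of ball radii right so that no $\log$ loss appears and the exponent is exactly $1-\beta'/\alpha$. I would first try the cases $\kappa=0$ and $\kappa=1$ (Examples \ref{ex_A2.4}, \ref{ex_A2.4-2}) to calibrate the splitting.

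Finally, I would plug these kernel bounds into the series for $\Phi$. The sub-convolution property of $G_t$ and the beta-integral estimates
\[
\int_0^t (t-s)^{-1+a}s^{-1+b}\,ds=B(a,b)\,t^{-1+a+b}
\]
give $|\Phi_t(x,y)|\le Ct^{-1+\delta'}\mathcal G_t(x,y)$, with $\mathcal G_t$ bounded by $Ct^{-1/\alpha}$. The series converges via the usual Gamma-function factorial decay, exactly as in \cite{AK}. Then
\[
|r_t(x,y)|\le\int_0^t\sup_z\wt p_{t-s}(x,z)\int|\Phi_s(z,y)|\,dz\,ds,
\]
and the symmetric split at $s=t/2$ yields $|r_t(x,y)|\le Ct^{-1/\alpha+\delta'}$. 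On $[t/2,t]$ one uses the point-wise bound on $\Phi$ and the $L^1$ bound on $\wt p$; on $[0,t/2]$ one uses $\sup\wt p_{t-s}\le Ct^{-1/\alpha}$ and $\int|\Phi_s|\,dz\le Cs^{-1+\delta'}$.
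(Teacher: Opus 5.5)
Your route is genuinely different from the paper's, although the decisive estimate is the same. The paper does not reopen the parametrix. It quotes part II of \cite[Theorem~3.4]{AK}, which already reduces \eqref{Residue_2} to the single condition
\[
\sup_{w\in\Re}\Big|t^{-1/\al}\int_{\Re}\nu\big(|\sigma(x)u|>t^{1/\al},\ |x+\sigma(x)u-w|<t^{1/\al}\big)\,dx\Big|\le Ct^{-1+\delta'}.
\]
It then passes to $|\nu|$ and to $u$-balls of radius $\eps Mt^{1/\al}$, bounds their mass by \eqref{dimension} with $r\asymp t^{1/\al}$ around $z=(w-x)/\sigma(x)$, where $|z|\asymp|x-w|$, and finishes with the one-line integral $t^{(\kappa-1)/\al}\int_{|x-w|\gtrsim t^{1/\al}}|x-w|^{-\kappa-\beta'}\,dx=Ct^{-\beta'/\al}$.

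Your plan instead re-proves that criterion: kernel bounds, sub-convolution, series. This buys self-containedness, but you must then re-establish the pointwise drift/scale kernel bounds, which you only assert by analogy. Note also that $x\mapsto\wt p_t(x,y)$ in \eqref{p_zero} need not be differentiable, since the averaged scale $\sigma_t(x)$ is only H\"older. So $(L_x-\partial_t)\wt p_t$, ``differentiating in the scale'', and your small-$u$ Taylor step have to be taken in the form in which \cite{AK} actually sets them up. You would be importing AK's machinery anyway.

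Two points in your sketch are misplaced. First, the hypothesis $1-\kappa<\beta'$ has nothing to do with convergence of the dyadic shell sum around $z$. Using the stable decay of $\wt p_t$, that sum only needs $\kappa<1+\al$. This costs nothing, because for $r\le|z|/2$ the bound \eqref{dimension} with $\kappa$ implies it with any smaller $\kappa$ (keeping $\kappa+\beta'>1$). What $\kappa+\beta'>1$ actually does is make your pointwise term $t^{(\kappa-1)/\al}|z|^{-\beta'-\kappa}$ integrable at infinity in the backward variable. That is exactly what your last step needs for $\int|\Phi_s(z,y)|\,dz\le Cs^{-1+\delta'}$, an integral over the first variable, which is not what \eqref{Residue_1} controls. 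That integral is precisely the criterion the paper checks, so your ``main obstacle'' is a one-line computation with no logarithmic loss to fight.

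Second, your scheme cannot deliver the exponent exactly $1-\beta'/\al$. The drift and scale parts of the kernel carry $t^{-1+\delta_{drift}}$ and $t^{-1+\delta_\sigma}$, so the series only yields the minimum of these rates and $1-\beta'/\al$. The paper's argument has the same feature: AK's criterion gives \eqref{Residue_2} only ``for some $\delta'\le\delta$''. Downstream (e.g.\ Proposition \ref{p41}) only $\delta'>0$ is used, so you should not try to secure the exact exponent.
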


\begin{proof} By part II of \cite[Theorem 3.4]{AK}, the following additional condition is sufficient for \eqref{Residue_2}  to hold true for some $\delta'\leq \delta$:
$$
\sup_{w\in \mathbb{R}}\left|t^{-1/\al}\int_{\mathbb{R}}\nu\left(|\sigma(x)u|>t^{1/\al}, \, |x+\sigma(x)u-w|<t^{1/\al}\right)\, dx\right|\leq Ct^{-1+\delta'}.
$$
Using additivity of $\nu(du)$, it is easy to deduce the above condition from a similar one with arbitrary (but fixed) $\eps>0$:
$$
\sup_{w\in \mathbb{R}}\left|t^{-1/\al}\int_{\mathbb{R}}\nu(|\sigma(x)u|>t^{1/\al}, \, |x+\sigma(x)u-w|<\eps t^{1/\al})\, dx\right|\leq Ct^{-1+\delta'}.
$$
Taking  
$$
m=\inf_x\frac1{\sigma(x)}, \quad M=\sup_{x}\frac{1}{\sigma(x)},
$$
we obtain another sufficient condition for the previous one: 
\begin{equation}
\label{hm:A_add1}
\sup_{w\in \mathbb{R}}t^{-1/\al}\int_{\mathbb{R}}|\nu|\left(|u|>mt^{1/\al}, \, \left|u-\frac{w-x}{\sigma(x)}\right|<\eps M t^{1/\al}\right)\, dx\leq Ct^{-1+\delta'}.
\end{equation}

Now we are ready to verify this condition using Assumption \ref{AssReg}. Fix $x,w$ and denote 
$
z=\frac{w-x}{\sigma(x)}.
$
We have 
$$
|u|>mt^{1/\al}, |u-z|\leq \eps Mt^{1/\al}\Longrightarrow |u-z|\leq \eps Mt^{1/\al}, \quad |z|> m t^{1/\al} -\eps M t^{1/\al}.
$$
Take $\eps=\frac{m}{3M},$ then 
$$
 \quad r: = \eps M t^{1/\al}=2(m t^{1/\al} -\eps M t^{1/\al})<|z|,
$$
and applying Assumption \ref{AssReg} we get 
$$
|\nu|\left(|u|>mt^{1/\al}, \, \left|u-\frac{w-x}{\sigma(x)}\right|<\eps M t^{1/\al}\right)
\leq |\nu|(|u-z|<r)
\leq Cr^{\kappa}|z|^{-\beta'-\kappa}
$$
for $|z|> m t^{1/\al} -\eps M t^{1/\al},$ and 
$$
|\nu|\left(|u|>mt^{1/\al}, \, \left|u-\frac{w-x}{\sigma(x)}\right|<\eps M t^{1/\al}\right)=0
$$
otherwise. Note that
$$
m|x-w|\leq |z|\leq M|x-w|, 
$$
hence we can verify \eqref{hm:A_add1} as follows:
$$\ba
t^{-1/\al}&\int_{\mathbb{R}}|\nu|\left(|u|>mt^{1/\al}, \left|u-\frac{w-x}{\sigma(x)}\right|<\eps M t^{1/\al}\right)\, dx
\\&\leq t^{-1/\al}(\eps M t^{1/\al})^\kappa\int_{|x-w|\geq M^{-1}(m  -\eps M) t^{1/\al}} |x-w|^{-\kappa-\beta'}\, dw
\\&=Ct^{-1/\alpha+\kappa/\al}(t^{1/\al})^{1-\kappa-\beta'}
=Ct^{-\beta'/\al}=Ct^{-1+\delta'},
\ea
$$
where we used that $\kappa+\beta'>1$.  
\end{proof}

\bigskip


\section{Limit theorems}\label{sB} 

In this section we prove the weak convergence \eqref{CLT} for the triple $(\Xi_n, \Gamma_n, \Sigma_n).$ This will require substantially different tools in the finite and infinite observation horizon cases, which we thus consider separately.

\subsection{Finite observation horizon}

Recall that, for the finite observation horizon case $T_n\to T$, we take $V(x)\equiv 1$; that is, we do not need additional weights in the construction. Since the functions 
$$
f(x):= \left(\nabla_\theta a(\theta_0;x)\right)^{\otimes 2}, \quad g(x)=\frac{1}{\sigma (x)}\left(\nabla_\theta a(\theta_0, x)\right)^{\otimes 2}
$$
are continuous and trajectories of $X$ are c\`adl\`ag, we have 
$$
\Gamma_{n}=\frac{1}{n}\sumk f(X_{t_{k-1,n}})\mathop{\to}\limits^{\mathbb{P}} \frac{1}{T}\int_0^Tf(X_t)\, dt=\Gamma_0,
$$
$$
\Sigma_{n}=\frac{1}{n}\sumk g(X_{t_{k-1,n}})\mathop{\to}\limits^{\mathbb{P}} \frac{1}{T}\int_0^Tg(X_t)\, dt=\Sigma_0
$$
simply by the convergence of Riemannian sums to the Riemann integral. Therefore, in order to prove \eqref{CLT}, it is enough to show that 
\be\label{CLT_s}
\Xi_n\mathop{\to}\limits^{s-\mathcal{L}} \Xi_0,
\ee
where $\mathop{\to}\limits^{s-\mathcal{L}}$ denotes \textit{stable convergence in law}. We refer to \cite[Section 2]{Jac97} for the definition and basic properties of this mode of convergence. To prove \eqref{CLT_s} it is enough apply  \cite[Theorem 3.2]{Jac97} to the sequence of processes
$$
\Xi_t^n=\sum_{k=1}^{[nt]} \chi_k^n,\quad t\in [0,1],
$$
with
$$
\chi_k^n=\mu_{k,n}\xi_{k,n}, \quad \mu_{k,n}=-\frac{1}{\sqrt{n}}\Big(\sgn (\zeta_{k,n})- \E[\sgn (\zeta_{k,n})|\Ff_{k-1,n}]\Big), \quad   \xi_{k,n}=\nabla_\theta a(\theta_0;X_{t_{k-1,n}}).
$$

There are five assumptions in \cite[Theorem 3.2]{Jac97}, two of which are satisfied trivially in our setting. Namely, both $\{\mu_{k,n}\}$ and $\{\chi^n_k\}$ are martingale difference sequences, hence the first assumption \cite[Eq.(3.10)]{Jac97} holds trivially with $B_t\equiv 0.$ Next, we will take the \textit{reference martingale} $M$ equal identically 0, hence \cite[Eq.(3.12)]{Jac97} holds trivially with $G_t\equiv 0.$ It is left to verify the following three assumptions: for any $t\in [0,1],$ 
\begin{itemize}
    \item \cite[Eq.(3.11)]{Jac97}: 
    \be\label{item1}
    \sum_{k=1}^{[nt]} \E[(\chi_k^n)^{\otimes 2} |\Ff_{k-1,n}]\mathop{\to}\limits^{\mathbb{P}} F_t := 
    \frac{1}{T}\int_0^t f(X_s)\, ds;
    \ee
      \item \cite[Eq.(3.13)]{Jac97}: for some $\eps>0,$
     \be\label{item2}
      \sum_{k=1}^{[nt]} \E[|\chi_k^n|^{2}1_{|\chi_k^n|>\eps} |\Ff_{k-1,n}] \mathop{\to}\limits^{\mathbb{P}} 0;
      \ee
  \item \cite[Eq.(3.14)]{Jac97}: for \textit{any} bounded martingale $N$ on the  filtered probability space $(\Omega,\mathcal{F},(\mathcal{F}_{t})_{t\in[0,\infty)},\P)$,
      \be\label{item3}
      \sum_{k=1}^{[nt]} \E[(N_{t_{k,n}}-N_{t_{k-1,n}})\chi_k^n |\Ff_{k-1,n}] \mathop{\to}\limits^{\mathbb{P}} 0.
     \ee     
\end{itemize}

The first two properties are easy to check. Indeed, 
$$
|\chi_k^n|^{2}\leq \frac{1}{n}|\nabla_\theta a(\theta_0, X_{t_{k-1,n}})|^2,
$$
and since the trajectory $X_t, t\in [0,T]$ is bounded and $\nabla_\theta a(\theta_0, \cdot)$ is continuous, \eqref{item2} holds true.
Next, we have 
$$
\E[(\chi_k^n)^{\otimes 2} |\Ff_{k-1,n}]=\E[(\mu_k^n)^{\otimes 2} |\Ff_{k-1,n}] f(X_{t_{k-1,n}})=\frac{1}{n}\Big(1-(\E[\sgn(\zeta_{k,n}) |\Ff_{k-1,n}])^2\Big) f(X_{t_{k-1,n}}).
$$
We have 
$$
 \frac{1}{n}\sum_{k=1}^{[nt]}f(X_{t_{k-1,n}})\mathop{\to}\limits^{\mathbb{P}} 
 F_t:=\frac1T \int_0^t f(X_s)\, ds
 $$
 by the convergence of Riemannian sums. 
 Moreover,
$$
|\E[\sgn(\zeta_{k,n}) |\Ff_{k-1,n}]|\leq Ch^\delta_n W(X_{t_{k-1,n}})
$$
(see \eqref{decomp2} and subsequent estimates) and the function $W(x)$ is locally bounded. Hence,
$$
 \frac{1}{n}\sum_{k=1}^{[nt]}(\E[\sgn(\zeta_{k,n})])^2f(X_{t_{k-1,n}})\mathop{\to}\limits^{\mathbb{P}} 0,
 $$
which gives \eqref{item1}.

The main difficulty is provided by the `orthogonality' condition \eqref{item3}, which is required to hold \textit{for any} (bounded) martingale $N$.  To describe the space $\mathcal{H}^1$ of all martingales on our given filtered space, we will use the version of the Jacod-Yor theorem, which we outline below, following the exposition in \cite[Section~18.3]{CohEll15}.

We can assume without loss of generality that our filtered probability space is canonical: $\Omega=\mathbb{D}([0,\infty))$, the filtration $(\mathcal{F}_{t})_{t\in[0,\infty)}$ is the corresponding natural filtration, and $\P$ is the distribution in $\mathbb{D}([0,\infty))$ of the solution to \eqref{sde}. We note that, on this specific probability space, there exists a L\'{e}vy process with the characteristic exponent \eqref{psi} such that \eqref{sde} holds. In general, we understand the solution to \eqref{sde} in the weak sense; that is, as a pair $(X,Z)$  on \textit{some} probability space. Note however that the process $Z$ in this pair can be obtained from $X$ in a measurable way because $\sigma(x)$ is separated from $0$. Indeed, denote $\Delta X_t=X_t-X_{t-}$ for the jump size at time $t$ and consider the random point measure
\be\label{NZ}
N^Z(A)=\#\left\{s: \Big(s, \frac{1}{\sigma(X_{s-})}\Delta X_s\Big)\in A\right\}.
\ee
This will be the random point measure corresponding to the jumps of the process $Z$. Then, $N^Z(ds,du)$ is a Poisson point measure with the intensity measure $ds\mu(du)$, and by the It\^o-L\'{e}vy decomposition we have the representation
\begin{equation}
\begin{split}
& Z_t=bt+\int_0^t\int_{|u|<1}u\widetilde{N^Z}(ds,du)+\int_0^t\int_{|u|\geq 1}u{N^Z}(ds,du),\\
& \qquad \widetilde{N^Z}(ds,du)={N^Z}(ds,du)-ds\mu(du).  
\end{split}
\label{Z}
\end{equation}
The same procedure can be repeated on the canonical probability space. Namely,  we can define $N^{Z,can}$ and $Z^{can}$ by \eqref{NZ} and \eqref{Z} with $ X^{can}$ instead of $X$. Since the laws of $ X^{can}$ and  $X$ are the same, their images under the same measurable construction are also the same. That is, $N^{Z,can}(ds,du)$ is a Poisson point measure with the intensity measure $ds\mu(du)$ and $Z^{can}$ is a L\'evy process with the characteristic exponent \eqref{psi}. Finally,  $(X^{can},Z^{can})$ satisfy \eqref{sde} by the construction.

From now on, we operate on the canonical probability space only, and for the brevity of notation omit the superscript; e.g. we will write $X$ instead of $X^{can}$. 
Denote by
$$
\mathcal{L}f(x)=a(\theta_0;x)f'(x)+\mathrm{P.V.}\int_{\mathbb{R}}(f(x+\sigma(x) u)-f(x))\nu(du),\quad f\in C^2_b(\mathbb{R}),
$$
the generator of the solution to \eqref{sde}. Denote by $\mathcal{N}$ the set of all the processes of the form 
$$
f(X_t)-\int_0^t\mathcal{L}f(X_s)\, ds, \quad t\in[0, \infty).
$$
Denote by $\mathfrak{P}(\mathcal{N})$ the set of all probability measures on $\Omega=\mathbb{D}([0,\infty))$ such that every process from $\mathcal{N}$ is a martingale; that is, the set of all \textit{solutions to the martingale problem}  $(\mathcal{L}, C_b^2(\mathbb{R}))$. 
Denote by $\mathcal{H}^\mathsf{p}(\P)$ for $\mathsf{p}\in[1,\infty]$ the space of all martingales $M$ such that $\E[\sup_t |M_t|^\mathsf{p}]<\infty$. 
The Jacod-Yor theorem (e.g. \cite[Theorem~18.3.6]{CohEll15}) states, in particular, that the following two statements are equivalent for a given probability $\P\in \mathfrak{P}(\mathcal{N})$:
\begin{itemize}
    \item the set of all stochastic integrals with respect to elements of $\mathcal{N}$ with bounded predictable integrands is dense in $\mathcal{H}^1(\P)$;
    \item $\P$ is an extremal point of $\mathfrak{P}(\mathcal{N})$, considered as a convex subset of the space of all probability measures on $(\Omega, \mathcal{F}).$
\end{itemize}

By \cite[Theorem~3.3]{AK}, the martingale problem $(\mathcal{L}, C_b^2(\mathbb{R}))$ is \textit{well posed}, i.e. has exactly one solution, and the law $\P$ of the (unique) weak solution to \eqref{sde} is this unique solution to the martingale problem. That is, 
$\mathfrak{P}(\mathcal{N})=\{\P\}$ and thus $\P$ is the extremal point of the one-point set $\mathfrak{P}(\mathcal{N})$. This means that 
stochastic integrals with respect to elements of $\mathcal{N}$ with bounded predictable integrands form a dense subset in $\mathcal{H}^1(\P)$.

We have already shown that, on the canonical probability space, 
the process $X$ satisfies \eqref{sde} with the L\'evy process which has the It\^o-L\'evy decomposition \eqref{Z}. Applying the It\^o formula, we see that arbitrary element of $\mathcal{N}$ can be written as
$$
f(X_t)-\int_0^t\mathcal{L}f(X_s)\, ds=\int_0^t\int_{\mathbb{R}} \Big[f(X_{s-}+\sigma(X_{s-}u)-f(X_{s-})\Big]\widetilde{N^Z}(ds,du).
$$
It follows that any $\P$-martingale $N$ can be obtained as a limit in $\mathcal{H}^1(\P)$ of linear combinations of the martingales of the form
\be\label{mart_appr_1}
I_t^{f,G}=\int_0^t G_s\int_{\mathbb{R}} \Big[f(X_{s-}+\sigma(X_{s-}u)-f(X_{s-})\Big]\widetilde{N^Z}(ds,du)
\ee
with $f\in C^2_b(\mathbb{R})$ and bounded predictable processes $G$. Each such linear combination is an element of the linear space 
 $\mathcal{J}_1$ of the processes of the form 
 \be\label{J}
 J_t=\int_0^t \int_{\mathbb{R}} H(s,u) \widetilde{N^Z}(ds,du)
 \ee
with predictable processes $H(s,u)$ such that $|H(s,u)|\leq C(|u|^2\wedge 1)$ a.s. for some constant $C$. 

Let us show that any \textit{bounded} martingale $N$ can be approximated by elements of $\mathcal{J}_1$  in  $\mathcal{H}^2(\P)$. By the Burkh\"older-Davis-Gundy inequality (e.g. \cite[Theorem~11.5.5]{CohEll15}), for any $J\in \mathcal{J}_1$
 $$
 c_1\E\left[[J-H]^{1/2}_1\right] \leq \|J-H\|_{\mathcal{H^1}}
 =\E\left[\sup_{t\in[0,1]}|J_t-H_t|\right] \leq C_1\E\left[ [J-H]^{1/2}_1\right]
 $$
for positive universal constants $c_1$ and $C_1$.
We have
\begin{align}
[J-N]=[J]-2[J,N]+[N]
&=\sum_{s\leq t}(\Delta J_s)^2-2 \sum_{s\leq t}\Delta J_s \Delta N_s + \langle N^c\rangle +\sum_{s\leq t}(\Delta N_s)^2
\nn\\
&= \sum_{s\leq t}(\Delta J_s - \Delta N_s)^2+ \langle N^c\rangle,
\nn
\end{align}
where we have used that $J$ does not have the continuous martingale part: $J^c=0$.
Let $\sup_t |N_t|\leq R$ a.s. and define a new process $J^R\in \mathcal{J}_1$ by changing the integrand $H(s,u)$ to 
$$
H^R(s,u)=F_R(H(s,u)), \qquad F_R(x)=\left\{
  \begin{array}{ll}
    x, & |x|\leq 2R; \\
    2R\sgn(x), & \hbox{otherwise.}
  \end{array}
\right.
$$
The time moments of jumps of $J$ and $J^R$ coincide and, because $|\Delta N_s|\leq 2R,$ we have 
$$
(\Delta J_s-\Delta N_s)^2\leq (\Delta J^R_s - \Delta N_s)^2
$$
for each jump. This yields that
$$
[J-N]\leq [J^R-N];
$$
that is, a martingale $N$ with $\sup_t |N_t|\leq R$ can be approximated in $\mathcal{H}^1(\P)$ by processes from $\mathcal{J}_1$ with their jumps bounded by $2R$. We also can stop these processes on the moment of their exit from $[-2R, 2R]$, the operation of which corresponds to the multiplication of $H(s,u)$ by $1_{[0,\tau]}(s)$ with the corresponding exit time $\tau$ and the stopped process remains in the class $\mathcal{J}_1$. This means that in $\mathcal{H}^1(\P)$, we can approximate a martingale $N$ bounded by $R$ by a sequence $\{J^n\}\subset\mathcal{J}_1$ bounded by $4R$. By the dominated convergence theorem, this approximation then holds for $\mathcal{H}^2(\P)$, as well. 

Denote by $\mathcal{J}_0$ the class of processes from $\mathcal{J}_1$ such that, in the representation \eqref{J}, the function $H(s,u)$ is bounded and equals $0$ for $|u|\leq c$, where $c>0$ may depend on the process. Clearly, by the It\^o isometry  $\mathcal{J}_0$ is dense in $\mathcal{J}_1$ in $\mathcal{H}^2$. Summarizing the above considerations, we proved that, for any bounded martingale $N$ and any $\eps>0,$ there exists $J^\eps\in \mathcal{J}_0$ such that   
\be\label{J2}
\E\left[\sup_{t\in[0,1]}(J_t^\eps-N_t)^2\right] \leq \eps.
\ee

Now we can prove the required assertion \eqref{item3}. First, we observe that \eqref{item3} holds true with $N$ replaced by any $J\in \mathcal{J}_0$. Indeed,
$$
\chi_k^n =\mu_{k,n} \xi_{k,n},
$$
where $|\mu_{k,n}|\leq n^{-1/2}$. Since \eqref{item3} states convergence in probability, we can use the standard localization technique (e.g. such as in the proof of Lemma \ref{l42}) to restrict ourselves to the case $\sup_{s\in [0,T}|X_s|\leq K$. In this case, $|\xi_{k,n}|\leq C$ and thus 
\be\label{chi}
|\chi_k^n|\leq Cn^{-\frac{1}{2}}.
\ee
On the other hand, $J$ is a compensated compound Poisson process with bounded jumps, hence
\begin{align}
& \left|\E[(J_{t_{k,n}}-J_{t_{k-1,n}})\chi_k^n|\Ff_{k-1,n}]\right|
\nn\\
&{}\quad \leq Ch_nn^{-\frac{1}{2}}+Cn^{-\frac{1}{2}}\E\left[|J_{t_{k,n}}-J_{t_{k-1,n}}|
\,1_{
N^Z((t_{k-1,n},t_{k,n}],\mathbb{R}\setminus\{0\}) \ge 1
}\middle|\,\Ff_{k-1,n}\right]
\nn\\
&{}\quad \leq  Ch_nn^{-\frac{1}{2}}.
\end{align}
Since 
$$
\sumk  Ch_nn^{-\frac{1}{2}}= Cn^{-\frac{1}{2}}\to 0,
$$
this proves \eqref{item3} for $J\in \mathcal{J}_0$. 

For arbitrary bounded martingale $N$ and arbitrary $\eps>0$, take $J^\eps\in \mathcal{J}_0$ such that \eqref{J2} holds. Again, using the localization we can assume \eqref{chi}. Then,
$$\ba
  \sum_{k=1}^{[nt]} \E[(N_{t_{k,n}}-N_{t_{k-1,n}})\chi_k^n |\Ff_{k-1,n}]& = \sum_{k=1}^{[nt]} \E[(J_{t_{k,n}}-J_{t_{k-1,n}})\chi_k^n |\Ff_{k-1,n}]
  \\
  &{}\quad + \sum_{k=1}^{[nt]} \E\left[\Big((N_{t_{k,n}}-N_{t_{k-1,n}})-(J_{t_{k,n}}-J_{t_{k-1,n}})\Big)\chi_k^n \middle| \Ff_{k-1,n}\right]. 
  \ea
  $$
The first sum converges to $0$ in probability. For the second sum, we have 
\begin{align}
& \hspace{-1cm}\E\left[ \sum_{k=1}^{[nt]} \E\left[\Big((N_{t_{k,n}}-N_{t_{k-1,n}})-(J_{t_{k,n}}-J_{t_{k-1,n}})\Big)\chi_k^n \Big|\Ff_{k-1,n}\right] \right]
\\&=\E\left[\sum_{k=1}^{[nt]} \Big((N_{t_{k,n}}-N_{t_{k-1,n}})-(J_{t_{k,n}}-J_{t_{k-1,n}})\Big)\chi_k^n \right]
\\&\leq \left(\E\left[\sum_{k=1}^{[nt]} \Big((N_{t_{k,n}}-N_{t_{k-1,n}})-(J_{t_{k,n}}-J_{t_{k-1,n}})\Big)^2\right]\right)^{1/2}
\left(\E\left[\sum_{k=1}^{[nt]} (\chi_k^n)^2\right]\right)^{1/2}
\\&\leq C \E\left[\Big((N_1-N_0)-(J_1-J_0)\Big)^2\right]
\\&\leq C\eps.
\end{align}
Thus for any $\gamma>0$,
$$
\limsup_{n\to \infty}\P\left(\left|
\sum_{k=1}^{[nt]} \E[(N_{t_{k,n}}-N_{t_{k-1,n}})\chi_k^n |\Ff_{k-1,n}]
\right|>\gamma\right)\leq \frac{\eps}{\gamma}.
$$
Since $\eps>0$ is arbitrary, this actually gives  
$$
\lim_{n\to \infty}\P\left(\left|
\sum_{k=1}^{[nt]} \E[(N_{t_{k,n}}-N_{t_{k-1,n}})\chi_k^n |\Ff_{k-1,n}]
\right|>\gamma\right)=0
$$
and completes the proof of \eqref{item3}.  We have checked all the assumptions required in \cite[Theorem 3.2]{Jac97}. Applying this theorem, we get \eqref{CLT_s} and complete the proof.

\subsection{Infinite observation horizon}
\label{sB2}

We have the following.

\begin{thm} Let Assumption \ref{A_diss} hold true. Then, the process $X$ is ergodic with respect to $\P^{\theta_0}$, and its transition probabilities admit the following convergence rates to the invariant measure:
    \be\label{rate}
    \|P_t(\tz; x, dy)-\pi(\tz, dy)\|_{TV}\leq U(x) r(t), \quad t\geq 0,
    \ee
    where
    \begin{itemize}
        \item for $\kappa\geq 1$,
        $$
        U(x)=1+|x|^q, \quad r(t)= Ce^{-ct};
        $$
    \item for $\kappa< 1$,
        $$
        U(x)=1+|x|^{q+\kappa-1}, \quad r(t)= C(1+ct)^{-(q+\kappa-1)/(1-\kappa)}
        $$    
    \end{itemize}
    with some $C,c>0$.
\end{thm}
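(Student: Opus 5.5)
We will prove the theorem by the standard Lyapunov-drift route for Markov processes. This combines a Foster--Lyapunov inequality for the generator $\mathcal{L}$, a local minorization (Doeblin) condition on compact sets, and the general ergodic theorems for Markov processes with polynomial or exponential Lyapunov functions. These theorems are in the form of Douc--Fort--Guillin for subgeometric rates and Down--Meyn--Tweedie for geometric rates; the argument of \cite{KulPav21}, quoted in Remark \ref{rem31}, is essentially of this type.

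\emph{Step 1: Lyapunov function.} Take $\mathcal{V}(x)$ smooth, equal to $|x|^q$ for $|x|\geq1$ (if $q$ is large we may lower it, keeping $\kappa+q>1$). We compute $\mathcal{L}\mathcal{V}(x)$ for large $|x|$ and split it into two parts.

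The drift part is $a(\theta_0;x)\mathcal{V}'(x)=q|x|^{q-1}a(\theta_0;x)\sgn x\leq -c|x|^{q-1+\kappa}$ by Assumption \ref{A_diss}.

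For the jump part, split $\mu$ into $|u|\le1$ and $|u|>1$. On $|u|\le 1$ we use a second-order Taylor expansion (the first-order term cancels in the P.V. sense for the symmetric part, and the nuisance part is of finite variation by Remark \ref{r23}). This gives $O(|x|^{q-1})$, or even $O(|x|^{q-2})$ for the principal symmetric part. On $|u|>1$ we use $|x+\sigma(x)u|^q-|x|^q\le C(|u|^q+|x|^{q-1}|u|)$; when $q<1$ this is just subadditivity. Integrability of $|u|^q$ on $\{|u|>1\}$ then yields a bound $C(1+|x|^{q-1})$, possibly up to a term $|x|^{q-1}\int_{|u|>1}|u|\,\mu(du)$ which has to be handled by truncating at $|u|\le |x|/2$. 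Since $\kappa>0$ is not assumed, the comparison $|x|^{q-1}\ll|x|^{q-1+\kappa}$ requires care for $\kappa\in(-1,0]$. Here the cruder estimate $\int(|x+\sigma u|^q-|x|^q)\mu(du)=o(|x|^{q-1+\kappa})$ should be checked using $q>1-\kappa$ and dominated convergence on $|u|\le \epsilon|x|$, together with the tail bound on $|u|>\epsilon|x|$.

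The outcome is
$$
\mathcal{L}\mathcal{V}\leq -c\,\mathcal{V}^{(q+\kappa-1)/q}+C\,1_{K}
$$
for a compact $K$. For $\kappa\ge1$ the exponent is $\ge1$, so this gives the linear drift $-c\mathcal{V}+C$. For $\kappa<1$ it is the subgeometric drift $-c\,\phi(\mathcal{V})$ with $\phi(v)=v^{1-(1-\kappa)/q}$.

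\emph{Step 2: minorization.} We show that every compact set is petite, indeed small, for the skeleton chain. Theorem \ref{tA1} gives the transition density decomposition for small $t$, with $\widetilde p_t$ strictly positive on compacts and the residual small in $L^1$. Combining this with the Chapman--Kolmogorov equations yields $p_{t_0}(x,y)\ge c>0$ for $x,y$ in a compact set. In addition, the continuity of $x\mapsto P_t(x,\cdot)$ in total variation gives aperiodicity and irreducibility (Lebesgue-irreducibility) and the $T$-chain property. Uniqueness of the invariant measure $\pi(\theta_0,\cdot)$ follows.

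\emph{Step 3: rates.} In the case $\kappa\ge1$, the Down--Meyn--Tweedie theorem gives $\|P_t(x,\cdot)-\pi\|_{TV}\le C(1+|x|^q)e^{-ct}$. In the case $\kappa<1$, we apply the Douc--Fort--Guillin theorem with $\phi(v)=v^{1-\epsilon}$, where $\epsilon=(1-\kappa)/q$. Then $H_\phi(v)=\int_1^v ds/\phi(s)\asymp v^{\epsilon}$, and the rate is $r(t)=\phi\circ H_\phi^{-1}(t)\asymp t^{(1-\epsilon)/\epsilon}=t^{(q+\kappa-1)/(1-\kappa)}$, which gives the stated bound. The stated weight $U(x)=1+|x|^{q+\kappa-1}$ corresponds to the interpolated control $\phi(\mathcal{V})$ in place of $\mathcal{V}$; this holds by the interpolation part of the same theorem, trading rate for weight.

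We expect the main obstacle to be Step 1 for $\kappa\le 0$ with heavy tails. There we must show that the jump contribution to $\mathcal{L}\mathcal{V}$ is of lower order than $|x|^{q-1+\kappa}$ using only $\int_{|u|>1}|u|^q\mu(du)<\infty$ and $q+\kappa>1$. If that fails, we would instead use $\mathcal{V}=|x|^{q'}$ with $q'$ slightly below $q$ and absorb the loss.
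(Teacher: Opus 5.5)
Your route is the paper's: it invokes \cite[Theorems~3.4.11, 3.4.12]{Kul18}, i.e.\ a Lyapunov-type condition plus a local Dobrushin condition checked through the density expansion of Appendix~\ref{sA1}. There are, however, two problems in how you carry it out.

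\textbf{Step~1 fails for $\kappa\le 0$, and your fallback cannot rescue it.} There $q>1-\kappa\ge 1$. Expand $|x+\sigma(x)u|^q-|x|^q=q|x|^{q-1}\sgn(x)\,\sigma(x)u+O(|x|^{q-2}u^2)$ for $|u|\le\eps|x|$, and use the tail bound for $|u|>\eps|x|$. This gives
\begin{equation*}
\mathcal{L}\mathcal{V}(x)=q|x|^{q-1}\sgn(x)\bigl(a(\theta_0;x)+\sigma(x)\,m\bigr)+o\bigl(|x|^{q-1+\kappa}\bigr),\qquad m:=\mathrm{P.V.}\!\int_{\mathbb{R}}u\,\mu(du).
\end{equation*}
Condition \eqref{psi_pj} does not force $m=0$, because $\nu$ may be asymmetric. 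If $m\neq0$, the term $q|x|^{q-1}\sigma(x)m\sgn x$ is positive on one side. For $\kappa<0$ it dominates $-c|x|^{q-1+\kappa}$, and for $\kappa=0$ it is of the same order. So your claimed $o(|x|^{q-1+\kappa})$ bound on the jump part is false. Lowering $q$ to $q'<q$ changes nothing, since the same first-order term appears for every $q'>1$.

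In fact no Lyapunov function can work, because the conclusion itself fails. Take $\al=3/2$, $\sigma\equiv1$, $\nu=\lambda\delta_{1/2}$, $a(\theta_0;x)=-\sgn(x)\min\{|x|,|x|^{-0.3}\}$, $\kappa=-0.3$ and $q=1.4$. Assumptions~\ref{Ass1} and~\ref{A_diss} hold. Yet $Z_t/t\to\lambda/2$ a.s. while $a(\theta_0;x)\to0$, so from large initial points $X$ escapes to $+\infty$ with probability bounded below, and there is no invariant probability. Your Step~1 is fine for $\kappa>0$, where the first-order jump term is $O(|x|^{q-1})=o(|x|^{q-1+\kappa})$. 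For $\kappa\le0$ an extra centering hypothesis is needed, e.g.\ $m=0$ as for symmetric $Z$. Under it, your dominated-convergence argument goes through.

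\textbf{The weight for $\kappa<1$ is not justified by Douc--Fort--Guillin interpolation.} That result trades rate against the strength of the norm, $\Psi_1(r_\phi(t))\|P_t(x,\cdot)-\pi\|_{\Psi_2(\phi\circ\mathcal{V})}\lesssim\mathcal{V}(x)$. The initial-point weight stays $\mathcal{V}(x)=|x|^q$. As written, Step~3 therefore gives only $(1+|x|^q)(1+ct)^{-(q+\kappa-1)/(1-\kappa)}$.

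To reach $U(x)=1+|x|^{q+\kappa-1}$, note that the coupling argument needs only $\E_x[r_\phi(\tau_K)]$, not $\E_x[\int_0^{\tau_K}r_\phi(s)\,ds]$.
\begin{itemize}
\item Since $r_\phi=(H_\phi^{-1})'$, the drift inequality gives $\E_x[H_\phi^{-1}(\tau_K)]\lesssim\mathcal{V}(x)$.
\item Concavity of $\phi$ and Jensen then yield $\E_x[r_\phi(\tau_K)]\le\phi\bigl(\E_x[H_\phi^{-1}(\tau_K)]\bigr)\lesssim\phi(\mathcal{V}(x))\asymp1+|x|^{q+\kappa-1}$.
\item The stationary copy is handled by $\pi(\phi\circ\mathcal{V})<\infty$.
\end{itemize}

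\textbf{Minor point on Step~2.} The $L^1$ bound \eqref{Residue_1} yields a local Dobrushin estimate $\|P_t(x,\cdot)-P_t(x',\cdot)\|_{TV}\le2-c$ for $|x-x'|\lesssim t^{1/\al}$. It does not give a pointwise lower bound on $p_{t_0}$. The Dobrushin estimate is what the paper uses; a pointwise bound would need Theorem~\ref{tA2}, hence Assumption~\ref{AssReg}.
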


This result follows by applying \cite[Theorem~3.4.11]{Kul18} and \cite[Theorem~3.4.12]{Kul18} for the cases $\kappa\geq 1$ and $\kappa<1,$ respectively. The Dobrushin condition requested in the preamble to these theorems can be verified easily using the properties of the  
transition density from Section \ref{sA1} and the argument from \cite[Section~3.2.2]{Kul18}.  

Given the ergodicity of the underlying process, we have the following.

\begin{prp}\label{prop}
    For any $f\in L_1(\pi(\tz\;\cdot)),$
    \be\label{LLN}
    \frac{1}{n}\sumk f(X_{t_{k-1,n}})\mathop{\to}\limits^{\P} \int_{\mathbb{R}}f(y)\pi(\tz,dy).
    \ee
\end{prp}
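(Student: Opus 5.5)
We prove \eqref{LLN} by comparing the Riemann sum with the time average $T_n^{-1}\int_0^{T_n} f(X_t)\,dt$ and invoking the ergodic theorem for the continuous-time process. The time average converges a.s.\ to $\int f\,d\pi$ for $\P^{\tz}$-almost every starting point drawn from the invariant law, and for an arbitrary initial law this extends by the total-variation convergence \eqref{rate}. For a general $f\in L_1(\pi)$, however, comparing the discrete sum with the integral path by path is awkward: $f$ need not be continuous, and the sampling grid changes with $n$. We therefore argue directly at the level of the discrete sums and use \eqref{rate} to control expectations and variances.

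The first step is a reduction to bounded $f$. Write $f=f_K+(f-f_K)$ with $f_K=(f\wedge K)\vee(-K)$ (for vector- or matrix-valued $f$, apply this componentwise). For the remainder we need
\[
\limsup_n \E\Big[\frac1n\sumk |f-f_K|(X_{t_{k-1,n}})\Big]
\]
to be small uniformly in $n$ once $K$ is large. We have
\[
\E |g|(X_t)=\int|g|\,d\pi+\int\!\!\int |g|(y)\big(P_t(\tz;x,dy)-\pi(\tz,dy)\big)\P^{X_0}(dx).
\]
The correction term is the hard part, since $|g|$ is unbounded. We first treat an initial law with a bounded density with respect to $\pi$, or simply $X_0\sim\pi$; then $\E|g|(X_t)=\int|g|\,d\pi$ exactly, which is small for large $K$. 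For a general initial law we use the Markov property at a small fixed time $s_0$. On the event $\{|X_0|\le R\}$, the law of $X_{s_0}$ has a density bounded by a constant times the invariant density on compacts; this follows from the density bounds of Appendix \ref{sA1} together with positivity of the invariant density. Alternatively, and more simply, we couple with a stationary copy using \eqref{rate}: the total-variation bound gives a coupling under which the two chains coincide after a time $t_0$ with probability at least $1-\E[U(X_0)]r(t_0)$. Consequently, $\frac1n\sumk f(X_{t_{k-1,n}})$ and its stationary analogue differ by at most the contribution of the first $t_0/h_n$ indices, which is $o(1)$ as $T_n\to\infty$, plus a term on an event of small probability. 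This coupling reduction is the key step. It reduces everything to the stationary case $X_0\sim\pi$, with the uniform-integrability truncation above handled exactly.

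In the stationary case with bounded $f$, the mean equals $\int f\,d\pi$. For the variance, mixing from \eqref{rate} gives a covariance decay. It is cleanest to use $\beta$-mixing: $\beta(t)\le\int U\,d\pi\,r(t)$, where $\int U\,d\pi<\infty$ by the moment bounds under Assumption \ref{A_diss}. If that integrability is not available, a subsequent truncation of $U$ still gives $\beta(t)\to0$. Then $|\mathrm{Cov}(f(X_s),f(X_{s+\tau}))|\le 2\|f\|_\infty^2\beta(\tau)$, so
\[
\Var\Big(\frac1n\sumk f(X_{t_{k-1,n}})\Big)\lesssim \frac{1}{n^2}\sum_{k,l}\beta(|k-l|h_n)\lesssim \frac1n+\frac{1}{T_n}\int_0^{T_n}\beta(u)\,du\to0,
\]
because $\beta(u)\to0$ and $T_n\to\infty$, while the Riemann-sum comparison is valid since $\beta$ is monotone. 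Chebyshev's inequality then gives \eqref{LLN} for bounded $f$. Letting $K\to\infty$ in the truncation completes the proof.

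The main obstacle is the non-stationary start combined with an unbounded $f$. Controlling $\E|f|(X_t)$ by $\int|f|\,d\pi$ is not automatic, and the coupling argument above is how we plan to handle it. If that argument has gaps, a fallback is to prove the result only for initial laws with $\E U(X_0)<\infty$; this covers all uses in the paper.
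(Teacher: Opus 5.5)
Your route is the paper's: stationary start, covariance decay from \eqref{rate} giving $L_2$ convergence for bounded $f$, extension to all of $L_1(\pi)$ via $\E|g(X_t)|=\int|g|\,d\pi$ under stationarity, and a finite-time coupling for a general initial law. The stationary half of your argument is fine.

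\textbf{The gap.} It is in the coupling step, where you assert that the indices with $t_{k-1,n}<t_0$ contribute $o(1)$. For the stationary copy this holds in $L_1$. For the non-stationary chain, however, it needs control of $f$ along the path on $[0,t_0]$, and $f\in L_1(\pi)$ gives none. The fraction $t_0/T_n$ of indices is small, but the individual values need not be.

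\textbf{A counterexample.} In fact the proposition fails in this generality. Take $a(\theta;x)=-\theta x$, $\sigma\equiv1$, $Z$ symmetric $\tfrac12$-stable, and $X_0=0$. Assumption \ref{A_diss} then holds with $\kappa=1$ and any $q\in(0,\tfrac12)$. Take $f(y)=|y|^{-3/4}1_{\{0<|y|\le1\}}$, which lies in $L_1(\pi)$ because $\pi$ is $\tfrac12$-stable with bounded density. Then $X_{h_n}$ is $\tfrac12$-stable with scale of order $h_n^{2}$. So the single nonnegative summand satisfies $\frac1n f(X_{t_{1,n}})\asymp \frac{h_n}{T_n}\,h_n^{-3/2}=\frac{h_n^{-1/2}}{T_n}$ in probability. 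This diverges for, e.g., $T_n=\log(1/h_n)$, a choice compatible with \eqref{h_step} here since $\delta$ may be taken in $(\tfrac12,1)$. The paper's sketch, which couples ``in a finite time'', passes over exactly the same point.

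\textbf{What is true, and suffices for the paper.}
\begin{itemize}
\item[(i)] If $f$ is locally bounded, your coupling argument goes through for every initial law. This covers every function the paper applies the proposition to: $V^2(\nabla_\theta a)^{\otimes2}$, $V^2\sigma^{-1}(\nabla_\theta a)^{\otimes 2}$, $G^N$, $1_{\{|x|>R\}}$ and the integrands of $\Lambda_{n,R}$. Since paths are c\`adl\`ag,
\[
\frac1n\sum_{k:\,t_{k-1,n}\le t_0}|f(X_{t_{k-1,n}})|\le\frac{t_0+h_n}{T_n}\sup_{s\le t_0}|f(X_s)|=O_P(T_n^{-1}).
\]
\item[(ii)] For arbitrary $f\in L_1(\pi)$, the stationary result transfers to any initial law with $\P^{X_0}\ll\pi$ through the path-space density $\frac{d\P^{X_0}}{d\pi}(X_0)$, with no coupling needed.
\end{itemize}

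Your fallback $\E[U(X_0)]<\infty$ targets the wrong quantity. The counterexample has a deterministic start, so it satisfies that condition and still fails. Moreover, the coupling probability is already controlled for any initial law, since $\int\min\{2,U(x)r(t_0)\}\,\P^{X_0}(dx)\to0$ by dominated convergence.
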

\begin{proof} This statement looks very much like the Birkhoff ergodic theorem, but we cannot use the latter one here because of the $n$-dependent discretization step $h_n$. Instead, we use with minimal changes the arguments from \cite[Section~5]{Kul18}. Namely, consider first the stationary version of $X$, i.e. the solution to \eqref{sde} with random $X_0$ whose distribution is equal to $\pi(\tz;\cdot).$ It follows from \eqref{rate} that 
$$
\Big|\mathrm{Cov}(f(X_t), f(X_s))\Big|\leq 2U(X_s)r(t)\|f\|^2_\infty,
$$
see \cite[Corollary~5.1.8]{Kul18}, where one should take $\gamma=1, W(x)\equiv 1, V(x)=U(x).$ Then it is an easy calculation to show that for \textit{bounded} $f$ convergence \eqref{LLN} holds true in $L_2(\P)$ sense. Using the $L_1(\P)$ isometry for stationary $\P$, we extend this convergence to whole $L_1(\pi(\tz\;\cdot)),$ though in this general case \eqref{LLN} holds true in $L_1(\P)$ sense. Finally,
by \eqref{rate}, a non-stationary process with a prescribed probability arbitrarily close to 1 can be coupled in a finite time with the stationary version of $X$, which yields \eqref{LLN} in the general non-stationary setting. We omit the details of such a quite common `coupling' trick, referring the reader to \cite{Kul18} for details and references.        
\end{proof}

Assumption \ref{A_moment} yields that $G:=VW^2\in L_1(\pi(\tz\;\cdot)).$ Indeed, let $G^N=G1_{G\leq N},$ then for any $N>1$ by Proposition \ref{prop},
$$
\int_{\mathbb{R}}G^N(y)\pi(\tz;dy)=\lim_{n\to \infty}\E\left[\frac{1}{n}\sumk G^N(X_{t_{k,n}})\right]\leq C:=\sup_n \E\left[\frac{1}{n}\sumk V(X_{t_{k,n}})W(X_{t_{k,n}})^2\right].
$$
Taking the limit for $N\to \infty$, we obtain the required statement. Since $V$ is bounded and $\nabla_\theta a$ is dominated by $W$, this means that 
the functions 
$$
f(x):= V(x)^2(\left(\nabla_\theta a(\theta_0;x)\right)^{\otimes 2}, \quad g(x)=\frac{V(x)^2}{\sigma (x)}\left(\nabla_\theta a(\theta_0, x)\right)^{\otimes 2}
$$
belong to $L_1(\pi(\tz\;\cdot)),$ and applying Proposition \ref{prop} we get that 
$$
\Gamma_n\mathop{\to}\limits^{\P}  \Gamma, \quad \Sigma_n\mathop{\to}\limits^{\P}\Sigma_0. 
$$
The same argument shows that, for every $\theta$ and any $R,$ 
$$
\Lambda_{n,R}(\theta)\mathop{\to}\limits^{\P} \Lambda_{0,R}(\theta),
$$
see the notation prior to \eqref{Lambda}. Because, for a given $R$, the random fields $\Lambda_{n,R}$ are uniformly Lipschitz in $\theta,$ this yields \eqref{Lambda}. 

Recall the notation:
$$
\Xi_t^n=\sum_{k=1}^{[nt]} \chi_k^n,\quad \chi_k^n=\mu_{k,n}\xi_{k,n},
$$
$$
\mu_{k,n}=-\frac{1}{\sqrt{n}}\Big(\sgn (\zeta_{k,n})- \E[\sgn (\zeta_{k,n})|\Ff_{k-1,n}]\Big), \quad   \xi_{k,n}=V(X_{t_{k-1,n}})\nabla_\theta a(\theta_0;X_{t_{k-1,n}}).
$$
Recall that 
$$
\sup_k\Big|\E[n(\mu_{k,n})^{\otimes 2} |\Ff_{k-1,n}]-1\Big|\mathop{\to} 0,
$$
Then by Proposition \ref{prop},
$$
 \sum_{k=1}^{[nt]} \E[(\chi_k^n)^{\otimes 2} |\Ff_{k-1,n}]\mathop{\to}\limits^{\mathbb{P}} t\Gamma_0.
$$
Using that $|\xi_{k,n}|$ is dominated by $V(X_{t_{k-1,n}})W(X_{t_{k-1,n}})$ and the moment Assumption \ref{A_moment}, we get that, for any  $\eps>0,$
$$  
      \sum_{k=1}^{[nt]} \E[|\chi_k^n|^{2}1_{|\chi_k^n|>\eps} |\Ff_{k-1,n}] \mathop{\to}\limits^{\mathbb{P}} 0.
$$      
Then, by the central limit theorem for martingale difference arrays (e.g. \cite{Dvo77}), we obtain
$$
\Xi_n=\Xi_1^n\Rightarrow N(0, \Gamma_0),
$$
which completes the proof of \eqref{CLT}.

\section{Some moment estimates} 
\label{hm:sec_moments}

In this section, we prove 
the moment estimate 
\begin{equation}\label{hm:lem_moment-1-gap.ineq}
    \left| \E\left[|h^{-1/\al}Z_h|^\rho\right] - \E\left[|h^{-1/\al}Z^{(\al)}_h|^\rho\right] \right| 
    \le C h^{\delta}
\end{equation}
for some $\rho>0, \delta>0$, used in Section \ref{hm:sec_ceAV}. For that purpose, we first construct a pair of processes $Z, Z^{(\alpha)}$ on the same probability space in such a way that they are close, in a sense.

Recall the decomposition \eqref{LM} of the {\lm} of $Z$: $\mu(dz)=\mu_\alpha(dz)+\nu(dz)$. The Hahn-Jordan decomposition of the finite-variation signed measure $\nu(dz)$ gives the representation by the positive and negative parts:
$$
\nu(dz)=\nu^+(dz)-\nu^-(dz).
$$
Denote
$$
\mu^{\min}(dz)=\mu_\alpha(dz)-\nu^-(dz).
$$
Then,
$$
\mu_\al(dz)=\mu^{\min}(dz)+\nu^-(dz), \quad \mu(dz)=\mu^{\min}(dz)+\nu^+(dz).
$$
Define on a certain probability space three \textit{independent} Poisson point measures
$$
N^{\min}(dz,dt), \quad N^{+}(dz,dt), \quad N^{-}(dz,dt),
$$
with the intensity measures $\mu^{\min}(dz)dt$, $\nu^+(dz)dt$, and $\nu^-(dz)dt$, respectively. Denote
$$
N_\al(dz,dt)=N^{\min}(dz,dt)+N^{-}(dz,dt), \quad N(dz,dt)=N^{\min}(dz,dt)+N^{+}(dz,dt),
$$
which are now \textit{dependent} Poisson point measures with the intensity measures $\mu_{\al}(dz)dt$ and $\mu(dz)dt$, respectively.

Define the process
$$
Z^{(\al)}_{t}=\int_0^t\int_{|z|\leq 1}z\widetilde{N}_\al(dz,ds)+\int_0^t\int_{|z|>1}z {N}_\al(dz,ds),
$$
where $\widetilde{N}_\al$ denotes the compensated $N_\al$.
Write
$$
Z_t=bt+\int_0^t\int_{|z|\leq 1}z\widetilde{N}(dz,ds)+\int_0^t\int_{|z|>1}z {N}(dz,ds)
$$
for our driving {\lp}. Moreover, let
$$
{N}^\triangle(dz,dt):={N}^+(dz,dt)+{N}^-(d(-z),dt),
$$
which defines a Poisson point measure with the intensity measure $\nu^\triangle(dz)dt$, where
$$
\nu^\triangle(dz) := \nu^+(dz)+\nu^-(d(-z)).
$$
Then,
$$
Z^\triangle_t:=Z_t-Z^{(\al)}_{t}=bt+\int_0^t\int_{|z|\leq 1}z\widetilde{N}^\triangle (dz,ds)+\int_0^t\int_{|z|>1}z {N}^\triangle(dz,ds).
$$
We have
$$
\int_{|z|\leq 1}z\nu^\triangle(dz)=\int_{|z|\leq 1}\Big(z\nu^+(dz)-z\nu^-(dz)\Big)=\int_{|z|\leq 1}z\nu(dz)=b;
$$
see 
Remark \ref{rem1} for the last identity. 
This finally gives the representation
\begin{equation}\label{hm:Z-decomposition}
    Z=Z^{(\al)} + Z^\triangle
\end{equation}
with
$$
Z^\triangle_t = \int_0^t\int_{\mathbb{R}}z{N}^\triangle(dz,ds).
$$
Write $Z^\triangle = Z^{\triangle, small} + Z^{\triangle, large}$, where
$$
Z^{\triangle, small}_t:=\int_0^t\int_{|z|\leq1}z{N}^\triangle(dz,ds), \qquad Z^{\triangle, large}_t:=\int_0^t\int_{|z|>1}z{N}^\triangle(dz,ds).
$$
Recall that $\beta\in[0,\al/2)\subset[0,1)$ denotes the Blumenthal-Getoor index of the nuisance part of $Z$, hence of $Z^{\triangle, small}$; see Remark \ref{hm:rem_A2.3} 
and also Examples \ref{ex_A2.4} and \ref{ex_A2.4-2}.

Building on the above descriptions, in the following lemma we prove small-time moment estimates for the two components of $h^{-{1\over\alpha}}Z_h^\triangle$.
Note that Assumption \ref{A_diss} entails that $\mathbb{E}[|Z^{\triangle, large}_1|^q] < \infty$ when $T_n \to \infty$, whereas no moment conditions are required when $T_n \equiv T$. In the sequel, we suppress the explicit dependence on $q>0$ to present our claims more concisely.

\begin{lem}
\label{hm:lem_moment-1}
We have the following for $h\in(0,1]$.
\begin{enumerate}
    \item For $\mathfrak{l}\in(0,1]$ for which $\E[|Z_1^{\triangle, large}|^\mathfrak{l}]<\infty$,
    \begin{align}
    \E\left[|h^{-{1\over\alpha}}Z_h^{\triangle, large}|^\mathfrak{l}\right] &\leq Ch^{1-\frac{\mathfrak{l}}{\al}}.
    \label{hm:lem_moment-1-ineq.l}
    \end{align}
    \item If $\beta=0$, then for $\mathfrak{s}\in(0,1]$,
    \begin{align}
    \E\left[|h^{-{1\over\alpha}}Z_h^{\triangle, small}|^\mathfrak{s}\right] &\leq Ch^{1-\frac{\mathfrak{s}}{\al}}.
    \label{hm:lem_moment-1-ineq.s-zero}
    \end{align}
    If $\beta>0$, then for $\mathfrak{s}\in(0,\beta)$,
    \begin{align}
    \E\left[|h^{-{1\over\alpha}}Z_h^{\triangle, small}|^\mathfrak{s}\right] &\leq Ch^{{\mathfrak{s}\over\gamma}-{\mathfrak{s}\over\alpha}}
    \label{hm:lem_moment-1-ineq.s}
    \end{align}   
    for any $\gamma\in (\beta, \alpha\wedge 1)$.
    \end{enumerate}
\end{lem}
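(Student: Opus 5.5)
Both estimates rest on the fact that $Z^\triangle$ is a driftless sum of jumps, $Z^\triangle_t=\int_0^t\int z\,N^\triangle(dz,ds)$, together with the subadditivity of $y\mapsto y^r$ for $r\in(0,1]$. Since $N^\triangle$ has intensity $\nu^\triangle(dz)\,dt$ and $|\nu|=\nu^++\nu^-$, we have $\nu^\triangle(\{|z|\ge\varepsilon\})\le|\nu|(\{|z|\ge\varepsilon\})$. Hence the index bound \eqref{BG} transfers to $\nu^\triangle$ for $\varepsilon\in(0,1]$.

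\emph{Part 1.} Using $|\sum_i x_i|^{\mathfrak l}\le\sum_i|x_i|^{\mathfrak l}$ for $\mathfrak l\le1$ and the compensation formula,
\[
\E\big[|Z^{\triangle,large}_h|^{\mathfrak l}\big]\le \E\Big[\int_0^h\int_{|z|>1}|z|^{\mathfrak l}N^\triangle(dz,ds)\Big]=h\int_{|z|>1}|z|^{\mathfrak l}\nu^\triangle(dz).
\]
The last integral is finite because $\E[|Z_1^{\triangle,large}|^{\mathfrak l}]<\infty$. Indeed, for a compound Poisson process with finite jump intensity $\lambda=\nu^\triangle(\{|z|>1\})<\infty$, the finiteness of the $\mathfrak l$-th moment is equivalent to $\int_{|z|>1}|z|^{\mathfrak l}\nu^\triangle(dz)<\infty$. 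Multiplying by $h^{-\mathfrak l/\alpha}$ gives \eqref{hm:lem_moment-1-ineq.l}.

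\emph{Part 2, case $\beta=0$.} By \eqref{BG} with $\beta=0$, $\nu^\triangle$ restricted to $\{|z|\le1\}$ is a finite measure. The same subadditivity computation then gives
\[
\E\big[|Z^{\triangle,small}_h|^{\mathfrak s}\big]\le h\int_{|z|\le1}|z|^{\mathfrak s}\nu^\triangle(dz)\le Ch,
\]
which yields \eqref{hm:lem_moment-1-ineq.s-zero}.

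\emph{Part 2, case $\beta>0$.} Here $\int_{|z|\le1}|z|^{\mathfrak s}\,d\nu^\triangle$ may diverge for $\mathfrak s<\beta$, so direct subadditivity fails. This is the main obstacle, and I would handle it by a truncation argument. Fix $\gamma\in(\beta,\alpha\wedge1)$. Integrating \eqref{BG} by parts shows $\int_{|z|\le1}|z|^{\gamma}\nu^\triangle(dz)<\infty$, and more generally
\[
\int_{|z|\le\varepsilon}|z|^{\gamma}\nu^\triangle(dz)\le C\varepsilon^{\gamma-\beta},\qquad \nu^\triangle(\{\varepsilon<|z|\le1\})\le C\varepsilon^{-\beta}.
\]
Split the small-jump part at level $\varepsilon=h^{1/\gamma}$ into jumps of size $\le\varepsilon$ and jumps of size in $(\varepsilon,1]$. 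For the first piece, use subadditivity with exponent $\gamma$ and then Jensen's inequality, valid since $\mathfrak s<\gamma$:
\[
\E\big[|\cdot|^{\mathfrak s}\big]\le\Big(\E\big[|\cdot|^{\gamma}\big]\Big)^{\mathfrak s/\gamma}\le\Big(Ch\,\varepsilon^{\gamma-\beta}\Big)^{\mathfrak s/\gamma}\le C h^{\mathfrak s/\gamma}.
\]
For the second piece, bound $\E[|\cdot|^{\mathfrak s}]$ by subadditivity with exponent $\gamma$ again, writing $|\cdot|^{\mathfrak s}\le 1+|\cdot|^{\gamma}$ on the event that at least one such jump occurs. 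That event has probability at most $Ch\varepsilon^{-\beta}$, and on it $\E[|\cdot|^\gamma]\le Ch\int_{|z|\le1}|z|^\gamma\,d\nu^\triangle$. If this crude bound does not produce $h^{\mathfrak s/\gamma}$, I would instead apply the exponent-$\gamma$ subadditivity to the whole small-jump part at once and then Jensen, which gives directly
\[
\E\big[|Z^{\triangle,small}_h|^{\mathfrak s}\big]\le\Big(h\int_{|z|\le1}|z|^{\gamma}\nu^\triangle(dz)\Big)^{\mathfrak s/\gamma}=Ch^{\mathfrak s/\gamma}.
\]
This one-line route is likely the intended one, and it makes the truncation unnecessary. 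Multiplying by $h^{-\mathfrak s/\alpha}$ gives \eqref{hm:lem_moment-1-ineq.s}. The only point needing care is the finiteness of $\int_{|z|\le1}|z|^\gamma\,d\nu^\triangle$ for $\gamma>\beta$, which follows from \eqref{BG} by the layer-cake formula.
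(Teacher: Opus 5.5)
Your proposal is correct and, once you discard the truncation detour for the one-line route, it is essentially the paper's argument: subadditivity of $y\mapsto y^{p}$ ($p\le 1$) plus the compensation formula for Part 1 and the case $\beta=0$, and for $\beta>0$ the same bound with exponent $\gamma$ followed by Lyapunov's (Jensen's) inequality. The truncation can be dropped: its second piece only yields $Ch^{1-\beta/\gamma}$, which need not be $O(h^{\mathfrak s/\gamma})$. Your checks that $\int_{|z|>1}|z|^{\mathfrak l}\nu^\triangle(dz)$ and $\int_{|z|\le 1}|z|^{\gamma}\nu^\triangle(dz)$ are finite fill in steps the paper leaves implicit.
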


\begin{proof}
Both $Z^{\triangle,large}_t$ and $Z^{\triangle, small}_t$ are just sums of some jumps of the process $Z^{\triangle}$: the first one is finite, the second one is a sum of an a.s. absolutely convergent series. This observation, together with the elementary inequality 
$|\sum_{i}a|^p\leq \sum_i|a_i|^p$ for $p\in (0,1]$ and $\{a_i\}\subset \mbbr$ 
gives the estimate 
$$
\E\left[\left|\int_0^t\int_U zN^\triangle(dz, ds)\right|^p\right] \leq
\E\left[ \int_0^t\int_U |z|^pN^\triangle(dz, ds) \right] = t\,\int_U |z|^p\nu^\triangle(dz)
$$
for any $p\in (0,1]$ and $U\subset \mathbb{R}$ such that 
$$
\int_U |z|^p\nu^\triangle(dz)<\infty.
$$
Taking in this estimate $p=\mathfrak{l}$ and $U=\{|u|>1\},$ we get \eqref{hm:lem_moment-1-ineq.l}. Also, taking $p=\mathfrak{s}$ and $U=\{|u|\leq 1\},$ we get \eqref{hm:lem_moment-1-ineq.s-zero}. Finally, applying the same inequality with $p=\gamma$ and $U=\{|u|\leq 1\},$ we get 
$$
 \E\left[|h^{-{1\over\alpha}}Z_h^{\triangle, small}|^\gamma\right] \leq Ch^{1-{\gamma\over\alpha}},
 $$
where we used that $|\nu^\triangle|(dz)\leq 2|\nu|(dz)$ has the Blumenthal-Getoor index $\beta<\gamma$, and therefore
$$
\int_{|u|\leq 1} |z|^\gamma\nu^\triangle(dz)<\infty.
$$
Then \eqref{hm:lem_moment-1-ineq.s} follows by the Lyapunov inequality. 

The estimate \eqref{hm:lem_moment-1-gap.ineq} is immediate from the previous three ones.
\end{proof}

Combining \eqref{hm:lem_moment-1-ineq.l}, \eqref{hm:lem_moment-1-ineq.s-zero}, and \eqref{hm:lem_moment-1-ineq.s}, we get the following.

\begin{cor} For any $\rho\in(0,1_{\{\beta=0\}}+1_{\{\beta>0\}}\beta)$ and $\gamma\in (\beta, \alpha\wedge 1)$, \eqref{hm:lem_moment-1-gap.ineq} holds true with 
$$\delta=\left(1_{\{\beta=0\}}+1_{\{\beta>0\}}\frac{\rho}{\gamma}\right)-{\rho\over\alpha}>0.
$$ 
\end{cor}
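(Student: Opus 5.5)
The corollary is a direct consequence of Lemma \ref{hm:lem_moment-1} together with the decomposition \eqref{hm:Z-decomposition}. I would first reduce the gap \eqref{hm:lem_moment-1-gap.ineq} to moments of the nuisance part. For $\rho\in(0,1]$ we have the elementary inequality $\bigl||x+y|^\rho-|x|^\rho\bigr|\le|y|^\rho$. Applying it with $x=h^{-1/\al}Z^{(\al)}_h$ and $y=h^{-1/\al}Z^\triangle_h$, and then once more to split $Z^\triangle=Z^{\triangle,small}+Z^{\triangle,large}$, gives
\[
\left|\E\left[|h^{-1/\al}Z_h|^\rho\right]-\E\left[|h^{-1/\al}Z^{(\al)}_h|^\rho\right]\right|
\le \E\left[|h^{-1/\al}Z^{\triangle,small}_h|^\rho\right]+\E\left[|h^{-1/\al}Z^{\triangle,large}_h|^\rho\right].
\]
The restriction $\rho<1_{\{\beta=0\}}+1_{\{\beta>0\}}\beta\le1$ guarantees $\rho\le1$. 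For the large-jump moment I would also need $\E[|Z^{\triangle,large}_1|^\rho]<\infty$. This is automatic when $T_n\equiv T$ after the usual localization removing large jumps, and it holds when $T_n\to\infty$ as soon as $\rho\le q$, which is part of the standing restrictions \eqref{hm:alpha.esti.rho}.

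Next I would bound each term by Lemma \ref{hm:lem_moment-1}. The large-jump part, via \eqref{hm:lem_moment-1-ineq.l} with $\mathfrak l=\rho$, is at most $Ch^{1-\rho/\al}$. For the small-jump part there are two cases. If $\beta=0$, then \eqref{hm:lem_moment-1-ineq.s-zero} with $\mathfrak s=\rho$ gives $Ch^{1-\rho/\al}$. If $\beta>0$, then $\rho<\beta$, and \eqref{hm:lem_moment-1-ineq.s} with $\mathfrak s=\rho$ gives $Ch^{\rho/\gamma-\rho/\al}$ for any $\gamma\in(\beta,\al\wedge1)$.

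Finally I would combine the exponents. The total bound is $Ch^{\min\{1-\rho/\al,\ \text{small-part exponent}\}}$. In the case $\beta>0$ we have $\gamma<1$ and $\rho<\gamma$, so $\rho/\gamma<1$ and the minimum is $\rho/\gamma-\rho/\al$. In the case $\beta=0$ the minimum is $1-\rho/\al$. This is exactly the stated $\delta$.

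Positivity of $\delta$ should be the only point requiring a moment's care. For $\beta>0$ it follows from $\gamma<\al$. For $\beta=0$ it follows from $\rho<\al$; I expect this constraint to be supplied by the standing choice of $\rho$ in \eqref{hm:alpha.esti.rho+1}, and otherwise it would have to be added as an explicit condition on $\rho$. The remaining steps are bookkeeping; the real content sits in Lemma \ref{hm:lem_moment-1}.
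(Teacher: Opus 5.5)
Your proof is correct and is essentially the paper's own argument: apply $\bigl||x+y|^\rho-|x|^\rho\bigr|\le|y|^\rho$ to the decomposition $Z=Z^{(\al)}+Z^\triangle$, then use the three bounds \eqref{hm:lem_moment-1-ineq.l}, \eqref{hm:lem_moment-1-ineq.s-zero}, \eqref{hm:lem_moment-1-ineq.s} and keep the worst exponent. You are also right that in the case $\beta=0$, $\delta>0$ (and indeed finiteness of the stable moment) requires $\rho<\al$, which the stated range $\rho<1$ does not ensure when $\al<1$; in the paper's application this condition is supplied by \eqref{hm:alpha.esti.rho+1}.
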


It is straightforward to extend Lemma \ref{hm:lem_moment-1} and the corresponding moment bounds to stochastic integrals: 
for $t\ge 0$, $h\in(0,1]$, $\mathfrak{s}\in(0,\beta)$, and for $\mathfrak{l}\in(0,1]$ for which $\E[|Z_1^{\triangle, large}|^\mathfrak{l}]<\infty$, we can find a 
$\lambda> 1/2$ such that
\begin{align}
\E\left[\left| h^{-{1\over\alpha}} \int_t^{t+h} \xi_{s-} 
dZ_s^{\triangle, large}\right|^\mathfrak{l}\right] &\leq C h^{1-\frac{\mathfrak{l}}{\al}} 
\sup_{t\in [t, t+h]}\E\left[|\xi_t|^{\mathfrak{l}}\right],
\nn\\
\E\left[\left| h^{-{1\over\alpha}} \int_t^{t+h} \xi_{s-} 
dZ_s^{\triangle, small}\right|^\mathfrak{s}\right] &\leq C h^{\lambda}
\sup_{t\in [t, t+h]}\E\left[|\xi_t|^{\mathfrak{s}}\right].
\nn
\end{align}
See also Lemma \ref{hm:lem_moment.ineq} for the related moment estimate associated with the locally $\al$-stable small-jump part.

The following Example \ref{hm:rate-example} is more than necessary for our use in proving the second item in Lemma \ref{hm:lem_2}, but is important in statistical applications with the stable quasi-likelihood function. See \cite{CleGlo19}, \cite{CleGlo20}, \cite{Mas19spa}, and also \cite{Mas_SQMLE_TechRep_2021}.

\begin{exmpl}
\label{hm:rate-example}
Suppose that the {\lm} $\mu(dz)$ admits a Lebesgue density on $\mbbr\setminus\{0\}$:
\begin{equation}
\mu(dz) = \frac{c_\al}{|z|^{\al+1}} \,\mathfrak{m}(z) dz
\end{equation}
for some function $\mathfrak{m}(z)=1+\mathfrak{m}_0(z)$ with $\mathfrak{m}_0(z)\ne 0$ ($z\ne 0$) satisfying that $\mathfrak{m}_0(z)=O(|z|)$ for $|z|\to 0$.
This is related to \eqref{ex_A2.4-2-eq1} in Example \ref{ex_A2.4-2} (namely, $\mathfrak{n}(z)=c_\al |z|^{-(\al+1)}\mathfrak{m}_0(z)$) and is a rather particular case compared with the general setting. Still, it includes many popular specific examples, such as the (exponentially) tempered stable, the normal tempered stable, the generalized hyperbolic, including the Student-$t$, the Meixner, and so on. 

Suppose that
\begin{equation}\label{hm:c.ex-1}
    \beta = 0 \vee (\al-1),
\end{equation}
which is satisfied in many contexts, including the aforementioned examples.
Under the present assumptions, Lemma \ref{hm:lem_moment-1} gives \eqref{hm:lem_moment-1-ineq.l}, \eqref{hm:lem_moment-1-ineq.s-zero}, and \eqref{hm:lem_moment-1-ineq.s}, with $\beta<\gamma<(\al\wedge 1)$. 
We further suppose that
\begin{equation}
    \mathfrak{m} := (\mathfrak{l} \vee \mathfrak{s}) < \frac{\al}{2}.
\end{equation}
Then, we have $1-\mathfrak{m}/\al>1/2$, so that the upper bounds in \eqref{hm:lem_moment-1-ineq.l} and \eqref{hm:lem_moment-1-ineq.s-zero} become $o(h^{1/2})$ for $h\to 0$.
Let us observe that in the estimate \eqref{hm:lem_moment-1-ineq.s}, stated for $\beta>0$ and $\gamma\in (\beta, \alpha\wedge 1)$, we can take
\begin{equation}\label{hm:moment_rem1-1}
    {\mathfrak{s}\over\gamma}-{\mathfrak{s}\over\alpha} 
    > \frac12.
\end{equation}
To see this, it suffices to consider $\al>1$ since $\beta=0$ otherwise; then, $\beta=\al-1>0$. 
As was saw in the proof of Lemma \ref{hm:lem_moment-1}, we can take $\gamma(>\beta)$ as close to $\beta$ as possible, say $\gamma=\beta+\eps$ with $\eps>0$ small enough.
Then, \eqref{hm:moment_rem1-1} is equivalent to
\begin{equation}
\mathfrak{s} > \frac12 \left( \frac{1}{\al-1+\eps} - \frac{1}{\al}\right)^{-1}.
\end{equation}
This together with the condition $\mathfrak{s}<\beta=\al-1$ leads to
\begin{equation}
\al > 1 + \frac12 \left( \frac{1}{\al-1+\eps} - \frac{1}{\al}\right)^{-1}.
\end{equation}
The lower bound is monotonically decreasing to $1+\frac12 \al(\al-1)$ for $\eps\downarrow 0$, and the inequality $\al > 1+ \frac12 \al(\al-1)$ $\iff$ $\al\in(1,2)$ always holds, thus concluding \eqref{hm:moment_rem1-1}.

Hence, the upper bounds in \eqref{hm:lem_moment-1-ineq.l}, \eqref{hm:lem_moment-1-ineq.s-zero}, and \eqref{hm:lem_moment-1-ineq.s} are all $o(h^{1/2})$ for $h\to 0$. It follows from \eqref{hm:lem_moment-1-gap.ineq} that
\begin{equation}\label{hm:moment_rem1-2}
    \frac{1}{\sqrt{h}}\left| \E\left[|h^{-1/\al}Z_h|^\rho\right] - \E\left[|h^{-1/\al}Z^{(\al)}_h|^\rho\right] \right| \to 0
\end{equation}
for any $\rho\in(0,1_{\{\beta=0\}}+1_{\{\beta>0\}}\beta)$ for which $\E[|Z_1|^\rho]<\infty$.
The estimate \eqref{hm:moment_rem1-2} is an $L^1$-local limit theorem with convergence rate. It serves as a variant of \cite[Proposition 4.1]{CleGlo20}.
\end{exmpl}

\bigskip

\noindent
\textbf{Acknowledgements.} This work was partially supported by JSPS KAKENHI Grant Number 23K22410 and JST CREST Grant Number JPMJCR2115, Japan (HM).

\bigskip



\end{document}